\renewcommand\Re{{\operatorname{Re}}}
\newcommand\rank{{\operatorname{rank}}}
\newcommand\R{{\mathbf{R}}}
\newcommand\C{{\mathbf{C}}}
\newcommand\M{{\mathbf{M}}}
\renewcommand\P{{\mathbf{P}}}
\newcommand\E{{\mathbf{E}}}
\newcommand\Z{{\mathbf{Z}}}
\newcommand\tr{{\operatorname{tr}}}
\newcommand\per{{\operatorname{per}}}
\newcommand\ep{\varepsilon}
\newcommand\Ba{{\mathbf a}}
\newcommand\Bc{{\mathbf c}}
\newcommand\Be{{\mathbf e}}
\newcommand\Bg{{\mathbf g}}
\newcommand\Bu{{\mathbf u}}
\newcommand\Bv{{\mathbf v}}
\newcommand\Bx{{\mathbf x}}
\newcommand\By{{\mathbf y}}
\newcommand\Bz{{\mathbf z}}
\renewcommand\Pr{{\mathbf P }}
\newcommand\CC{{\mathcal C}}
\newcommand\CF{{\mathcal F}}
\newcommand\CI{{\mathcal I}}
\renewcommand\O{\mathbf{O}}
\newcommand\col{{\mathbf{c}}}
\newcommand\row{{\mathbf{r}}}
\newcommand\0{{\mathbf{0}}}
\theoremstyle{plain}
  \newtheorem{theorem}[subsection]{Theorem}
  \newtheorem{conjecture}[subsection]{Conjecture}
  \newtheorem{proposition}[subsection]{Proposition}
  \newtheorem{lemma}[subsection]{Lemma}
  \newtheorem{corollary}[subsection]{Corollary}
  \newtheorem{question}[subsection]{Question}
  \newtheorem{example}[subsection]{Example}
  \newtheorem{claim}[subsection]{Claim}
\theoremstyle{definition}
  \newtheorem{definition}[subsection]{Definition}
\author{Scott Aaronson}
\email{aaronson@csail.mit.edu}
\address{Department of Electrical Engineering and Computer Science, Massachusetts Institute of Technology, Cambridge, MA 02139}
\author{Hoi Nguyen}
\email{nguyen.1261@math.osu.edu}
\address{Department of Mathematics, The Ohio State University, Columbus, OH 43210}
\thanks{S.\ Aaronson is supported by an NSF Waterman Award.  H.\ Nguyen is supported by research grant DMS-1358648}
\begin{document}

\title{Near invariance of the hypercube}

\maketitle

\begin{abstract} We give an almost-complete description of orthogonal matrices $M$ of order $n$ that ``rotate a non-negligible fraction of the Boolean hypercube $\CC_n=\{-1,1\}^n$ onto itself,'' in the sense that
$$\P_{\Bx\in \CC_n}(M\Bx\in \CC_n) \ge n^{-C},\mbox{ for some positive constant } C,$$
where $\Bx$ is sampled uniformly over $\CC_n$. \ In particular, we show that such matrices $M$ must be very close to products of permutation and reflection matrices. \ This result is a step toward characterizing those orthogonal and unitary matrices with large permanents, a question with applications to linear-optical quantum computing.

\end{abstract}

\section{Introduction}\label{section:intro}

Let $M=(m_{ij})_{1\le i,j\le n}$ be a square matrix of order $n$ of real entries. \ Motivated by a question from linear-optical quantum computing (see \cite{AA}), the first named author and Hance \cite{AH} asked the following question.

\begin{question}\label{question:perm} Characterize all matrices $M$  such that $\|M\|_2\le 1$ and there exists a constant $C\ge 0$ such that
$$\per(M) \ge n^{-C}.$$
\end{question}

It is not hard to show that (see also \cite{G}), with $\Bx=(x_1,\dots,x_n)$ where $x_1,\dots,x_n$ are iid copies of an arbitrary real random variable $\xi$ of mean zero and variance one,

$$\per(M) = \E_{\Bx} \prod_{i=1}^n x_i (M\Bx)_i.$$

Thus, if we choose $\xi$ to be the Bernoulli random variable (taking values $\pm 1$ independently with probability 1/2), then $\per(M)\ge n^{-C}$ would imply that

$$\E_{\Bx\in \CC_n} \prod_{i=1}^n  |(M\Bx)_i| \ge n^{-C}.$$

where $\CC_n$ denotes the $n$-dimensional hypercube $\{-1,1\}^n$,  and $\Bx$ is chosen uniformly in  $\CC_n$

Furthermore, as $ \prod_{i=1}^n  |(M\Bx)_i| \le 1$, a simple calculation gives

\begin{equation}\label{eqn:scorefn}
s(M):=\P_{\Bx\in \CC_n}\left(\prod_{i=1}^n |(M\Bx)_i |\ge n^{-C}/2\right) \ge n^{-C}/2.
\end{equation}

For short, the quantity $s(M)$ above is called the {\it score function} of $M$. \ Equation \eqref{eqn:scorefn} motivates us to ask the following question (see also \cite{A}.)

\begin{question}\label{question:score} Characterize all matrices $M$ such that $\|M\|_2\le 1$ and there exists a constant $C$ such that $s(M)\ge n^{-C}/2$.
\end{question}

As the direct study of $s(M)$ seems very difficult at the moment, the goal of this paper is to focus on a simpler (but closely related) object as follows. \ We define the {\it exact score function} of $M$ to be

$$s_0(M):=\P_{\Bx\in \CC_n}(M\Bx \in \CC_n).$$

Clearly, $s_0(M)\le s(M)$.  \ We observe that, as $s_0$ measures how $\CC_n$ is preserved under $M$, or how far the random vector $M\Bx$ is from being a Bernoulli vector, the study of this exact score function is natural on its own.

For the sake of discussion, we will be focusing on orthogonal matrices for the rest of this section. Observe that if $M\Bx = (\ep_1 x_{\pi(1)},\dots, \ep_n x_{\pi(n)})$  for any choice of signs $\ep_i \in \{-1,1\}$, and for any permutation $\pi$ in $S_n$, then $s_0=1$. \ In other words, if $M$ is a product of permutation and reflection matrices (or shortly permutation-reflection matrices), then $s_0(M) =1$. \ We would like to study the following inverse problem.


\begin{question}\label{question:0}
Are permutation-reflection matrices ``essentially'' the only orthogonal matrices with large $s_0(M)$, say $s_0(M)\ge n^{-C}$ for some positive constant $C$?
\end{question}

In this paper, we confirm this heuristic by showing the following.

\begin{theorem}[Main application]\label{theorem:orthogonal:2} Let $0<\ep<1$ and $C>0$ be given constants. \ Assume that $M\in \O(n)$ with $s_0(M)\ge n^{-C}$. \ Then all but $O(n^{1-\ep})$ rows of $M$ contain a (unique) entry of absolute value at least $1-O(n^{-1+\ep})$.
\end{theorem}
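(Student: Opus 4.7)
My plan is to combine a row-by-row inverse Littlewood--Offord analysis with the global orthonormality of $M$ to force most rows into a near-standard-basis form.

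\textbf{Reduction to a single row.} Since $M\in\O(n)$, every row $\row_i$ is a unit vector, and on the event $\CE:=\{M\Bx\in\CC_n\}$ one has $\langle\row_i,\Bx\rangle=\sum_j m_{ij}x_j\in\{-1,+1\}$ for every $i$. Hence
\[
\P_{\Bx\in\CC_n}\bigl(\langle\row_i,\Bx\rangle\in\{-1,+1\}\bigr)\ \ge\ s_0(M)\ \ge\ n^{-C},
\]
so one of the atom probabilities $\P(\langle\row_i,\Bx\rangle=\pm 1)$ exceeds $n^{-C}/2$. I would then feed this into a quantitative inverse Littlewood--Offord theorem (Hal\'asz, Tao--Vu, or Nguyen--Vu, formulated via $\LCD$-bounds). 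Because $\|\row_i\|_2=1$, the conclusion is that the entries of $\row_i$ live, up to a tail of negligible $L^2$-mass, in a rational lattice of denominator at most $n^{O(C)}$. In particular their essential support is a small set $D_i\subset[n]$, and the entries outside $D_i$ contribute only a small $L^2$-mass.

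\textbf{Pruning to a single dominant entry.} The heart of the argument is to show that $|D_i|=1$ for all but $O(n^{1-\eps})$ indices $i$. Rows with $|D_i|\ge 2$ are ``spread'': up to normalization, they look like $\sum_{j\in D_i}\pm\tfrac{1}{\sqrt{k_i}}\mathbf e_j$ with $k_i\ge 2$, and the event $\langle\row_i,\Bx\rangle=\pm 1$ then imposes a non-trivial linear relation among the variables $(x_j)_{j\in D_i}$. The model case is when the supports $D_i$ of distinct spread rows are disjoint, in which case the row events are independent and each costs a multiplicative factor $q<1$ in $\P(\CE)$; orthonormality of the rows makes general $D_i$'s behave approximately as if disjoint (their intersection patterns are very constrained), so the joint probability is at most $q^{\#\{\text{spread rows}\}}$ up to lower-order losses. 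If more than $n^{1-\eps}$ rows were spread, this product would fall below $n^{-C}$, contradicting the hypothesis. Concretely I would formalize this either by a direct counting of $|M\CC_n\cap\CC_n|$, iterating over choices on the dominant part of each row, or by a Fourier/characteristic-function estimate applied to the joint distribution of the bad rows.

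\textbf{Finishing.} Once row $i$ has a unique dominant index $j_i$, the identity $\sum_j m_{ij}^2=1$ together with the $L^2$-tail bound from the single-row step gives $|m_{ij_i}|\ge 1-O(n^{-1+\eps})$, as claimed. Distinctness of the $j_i$'s for good rows follows from orthogonality: if $j_i=j_{i'}$ for two good rows, the product $m_{ij_i}m_{i'j_i}$ would contribute $\approx\pm 1$ to $\langle\row_i,\row_{i'}\rangle=0$, which the remaining small residuals in the other columns cannot cancel.

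\textbf{Main obstacle.} The crux is the second step: single-row inverse Littlewood--Offord only guarantees $|D_i|\le n^{O(C)}$, not $|D_i|=1$. Ruling out higher-rank dominance in almost every row forces one to use the simultaneous requirement of $\CE$ across all rows, together with orthonormality, rather than treating rows one at a time, and it is here that the quantitative bound $O(n^{1-\eps})$ on the number of exceptional rows is extracted.
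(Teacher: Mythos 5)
Your first step already overclaims what the inverse Littlewood--Offord theorem delivers. Theorem \ref{theorem:optimal} (Nguyen--Vu) concludes that the entries of a row with large atom probability lie, apart from $n^{\ep}$ exceptions, in a proper symmetric GAP of bounded rank and size $n^{O(C)}$; it does \emph{not} say they lie in a rational lattice of bounded denominator, nor that they have small essential support, nor does it give any $L^2$-tail bound. The row $\row_i=\tfrac{1}{\sqrt n}(1,\dots,1)$ is a unit vector with $\P_{\Bx\in\CC_n}(\langle\row_i,\Bx\rangle=a)\asymp n^{-1/2}$ for suitable $a$, so for $C\ge 1/2$ it passes your single-row test while having full support and no dominant entry: its entries form a rank-one GAP with generator $1/\sqrt n$. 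Hence a row-by-row application of inverse Littlewood--Offord cannot even distinguish good rows from bad ones, and the ``$L^2$-tail bound from the single-row step'' invoked in your finishing paragraph does not exist; in particular the quantitative threshold $1-O(n^{-1+\ep})$ has no source in your argument.

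The second step, which you yourself identify as the heart, is asserted rather than proved, and the assertion is false in the form stated: orthonormality does \emph{not} make the row events behave as if the supports were disjoint. The dangerous scenario is precisely when the non-dominant parts of many rows share a common bounded-rank structure, as in the paper's Example \ref{example:3}, $M=(\delta_{ij}+xt_it_j)$, where all the events $\langle\row_i,\Bx\rangle\in\{-1,1\}$ are essentially implied by a single linear event and are therefore maximally correlated; no bound of the form $q^{\#\{\text{spread rows}\}}$ can hold there, and $s_0(M)\ge n^{-O(1)}$ is achievable. The paper's route is quite different: it first proves the structure theorems (Theorems \ref{theorem:discrete:main:1} and \ref{theorem:discrete:main:2}) via a block decomposition of rows, conditional-probability factorization, and the extension Lemma \ref{lemma:mrows}, producing a large submatrix of the form $M''+F$ with $F$ near a permutation-reflection and $M''$ of rank $r=O(1)$ with the explicit shape built from $U$ and $D$ as in \eqref{eqn:M:rankr}; only then does orthogonality enter, through the identity \eqref{eqn:application:domination:0} and the trace estimate $|\tr(DUD^T)|\le 2r$ of Lemma \ref{lemma:application:domination} (proved by writing $U=-2(I_r+D^TD-A)^{-1}$ with $A$ skew and controlling real parts of eigenvalues), after which a Markov argument distributes this bounded trace over $n$ diagonal entries to show at most $2rn^{1-\ep}$ of them can deviate from $\pm1$ by more than $n^{-1+\ep}$. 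Your proposal contains no substitute for this global low-rank-plus-trace mechanism, which is exactly what rules out the correlated configurations and produces both the $O(n^{1-\ep})$ exceptional-row count and the $1-O(n^{-1+\ep})$ bound. As it stands, the proposal has a genuine gap at its central step.
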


We will see in Example \ref{example:3} that the lower bound $1-O(n^{-1+o(1)})$ on the large entries is tight. \ We believe that our approach to proving Theorem \ref{theorem:orthogonal:2}, which goes through  inverse Littlewood-Offord theory initiated by Tao and Vu, will also be useful for the study of Question \ref{question:score}.

In Appendix \ref{section:stochastic}, we answer Question \ref{question:perm}, but for \textit{stochastic} matrices, rather than orthogonal matrices or matrices of bounded norm. \ In more detail, we show there that, if $A$\ is an $n\times n$\ stochastic matrix with $\operatorname*{Per}\left(  A\right)  \geq n^{-O\left(  1\right)  }$, then all but $O\left(  \log n\right)  $\ of the rows of $A$\ are dominated by a single large entry (and in that sense, $A$ is ``close to a permutation matrix'').

Let us mention a few interesting applications and alternative statements of
Theorem \ref{theorem:orthogonal:2}.

First, Theorem \ref{theorem:orthogonal:2} implies that there is no $n\times n$\ orthogonal matrix
$M$\ that maps a non-negligible fraction of \textit{uniform superpositions} to
other uniform superpositions, besides \textquotedblleft highly
degenerate\textquotedblright\ matrices (i.e., those close to permuted diagonal
matrices). \ Here a uniform superposition is defined to be any quantum state
of the form%
\[
\left\vert \psi\right\rangle =\frac{\pm\left\vert 1\right\rangle \pm\cdots
\pm\left\vert n\right\rangle }{\sqrt{n}}.
\]
These states often arise and are of interest in quantum computing, and
a-priori, one might have hoped that there would be interesting transformations
that had a non-negligible probability of staying within the set of such
states. \ We conjecture that an analogous result should hold for arbitrary
\textit{unitary} matrices, except

\begin{enumerate}
\item with the condition that $M\left\vert \psi\right\rangle $ is a uniform
superposition relaxed to the condition that $\left\vert \left\langle
i\right\vert M\left\vert \psi\right\rangle \right\vert =1/\sqrt{n}$\ for all
$i\in\left[  n\right]  $, and
\vskip .1in
\item with the exception for matrices close to permuted diagonal matrices
broadened to include matrices close to permuted \textit{block}-diagonal
matrices, with $2\times2$\ blocks such as%
\[
B=\frac{1}{\sqrt{2}}\left(
\begin{array}
[c]{cc}%
1 & i\\
1 & -i
\end{array}
\right)  .
\]
For one can check that $B$ maps each of the four vectors $\left(  1,1\right)
$, $\left(  1,-1\right)  $, $\left(  -1,1\right)  $, $\left(  -1,-1\right)  $
to a vector both of whose entries have equal magnitude. \ (We thank Sumegha
Garg for this observation.)
\end{enumerate}

Second, it is clear that $\Bx^{T}M\By\leq n$, if $M$ is an $n\times n$\ orthogonal
matrix and $\Bx$ and $\By$\ are any vectors in $\CC_n$.
\ Moreover it is clear that%
\[
\Pr_{\Bx,\By\in\CC_n}\left(\Bx^{T}M\By=n\right) \leq\frac
{1}{2^{n}},
\]
since we can only have equality if $x=My$. \ Theorem \ref{theorem:orthogonal:2} says that, unless $M$
is highly degenerate (in particular, unless all but $O\left(  n^{1-\varepsilon
}\right)  $\ of $M$'s rows are dominated by a single large entry), we have%
\[
\Pr_{\Bx,\By\in\CC_n}\left(\Bx^{T}M\By=n\right)  =o\left(
\frac{1}{2^{n}n^{C}}\right)
\]
for all constants $C$.

We thank Alex Arkhipov for the third application of Theorem \ref{theorem:orthogonal:2}. \ Given two
points $\Bx,\By\in \CC_n$, let their \textit{Hamming
distance} $\Delta\left(  \Bx,\By\right)  $\ be the number of coordinates on which
they differ. \ Then given a subset $S\subseteq \CC_n$,
and a function $f:S\rightarrow \CC_n$, we call $f$ a
\textit{bi-Lipschitz bijection of} $S$ if%
\[
\Delta\left(  f\left(  \Bx\right)  ,f\left(  \By\right)  \right)  =\Delta\left(
\Bx,\By\right)
\]
for all $\Bx,\By\in S$. \ Clearly, for any $S$, we can produce $n!2^{n}%
$\ different bi-Lipschitz bijections of $S$\ by simply permuting and
reflecting the $n$ coordinates of the hypercube. \ However, one might wonder
for which $S$'s there exist bi-Lipschitz bijections that are more interesting
than that. \ We claim that Theorem \ref{theorem:orthogonal:2} implies that, \textit{if }$\left\vert
S\right\vert \geq2^{n}/n^{O\left(  1\right)  }$\textit{, then any bi-Lipschitz
bijection of }$S$\textit{ is close (in some sense) to a permutation and
reflection of the coordinates.} \ This is a consequence of the following proposition.

\begin{proposition}\label{prop:BL}
Given any bi-Lipschitz bijection $f:S\rightarrow \CC_n$,
there exists an orthogonal matrix $M$ such that $M\Bx=f\left(  \Bx\right)  $\ for
all $\Bx\in S$.
\end{proposition}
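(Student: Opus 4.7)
The plan is to reduce bi-Lipschitz to isometry, and then use the standard fact that a partial linear isometry extends to a global one. The key observation is that on $\CC_n$, Hamming distance determines the Euclidean inner product via
\[
\langle \Bx,\By\rangle \;=\; \tfrac{1}{2}\bigl(\|\Bx\|^2+\|\By\|^2-\|\Bx-\By\|^2\bigr) \;=\; n - 2\Delta(\Bx,\By),
\]
since every $\Bz\in\CC_n$ has $\|\Bz\|^2=n$ and $\|\Bx-\By\|^2=4\Delta(\Bx,\By)$. \ Because $f$ preserves Hamming distance and maps $\CC_n$ into $\CC_n$, it therefore preserves the Euclidean inner product: $\langle f(\Bx),f(\By)\rangle=\langle \Bx,\By\rangle$ for all $\Bx,\By\in S$.

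Next I would choose a maximal linearly independent subset $\Bx_1,\dots,\Bx_k\in S$. \ The Gram matrix $(\langle \Bx_i,\Bx_j\rangle)_{i,j}$ is then nonsingular, and since $f$ preserves inner products, the image vectors $f(\Bx_1),\dots,f(\Bx_k)$ have the same Gram matrix and are therefore also linearly independent. \ Define a linear map $M_0:\text{span}(\Bx_1,\dots,\Bx_k)\to\R^n$ by $M_0\Bx_i=f(\Bx_i)$ and extending linearly; equality of Gram matrices guarantees that $M_0$ is an isometry on its domain. \ Finally, extend $M_0$ to an orthogonal map $M\in\O(n)$ by choosing any linear isometry from $\text{span}(\Bx_1,\dots,\Bx_k)^\perp$ onto $\text{span}(f(\Bx_1),\dots,f(\Bx_k))^\perp$ (both have dimension $n-k$).

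To conclude, I would verify $M\Bx=f(\Bx)$ for every $\Bx\in S$. \ Writing $\Bx=\sum_i c_i\Bx_i$, set $\Bv:=f(\Bx)-M\Bx=f(\Bx)-\sum_i c_i f(\Bx_i)$. \ Expanding $\|\Bv\|^2$ and using $\langle f(\Bx),f(\Bx_i)\rangle=\langle \Bx,\Bx_i\rangle$ and $\langle f(\Bx_i),f(\Bx_j)\rangle=\langle \Bx_i,\Bx_j\rangle$, one gets
\[
\|\Bv\|^2 \;=\; \|\Bx\|^2 - 2\sum_i c_i\langle \Bx,\Bx_i\rangle + \sum_{i,j}c_ic_j\langle \Bx_i,\Bx_j\rangle \;=\; \Bigl\|\Bx-\sum_i c_i\Bx_i\Bigr\|^2 \;=\; 0,
\]
so $M\Bx=f(\Bx)$ as required.

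There is no real obstacle here; the whole argument is elementary linear algebra once one notices that Hamming-distance preservation on $\CC_n$ upgrades for free to Euclidean inner-product preservation. \ The only small point worth stating carefully is the extension step in the second paragraph, and that is immediate because the two orthogonal complements have matching dimensions.
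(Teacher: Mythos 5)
Your proof is correct and follows essentially the same route as the paper: both reduce Hamming-distance preservation to Euclidean isometry via $\Delta(\Bx,\By)=\|\Bx-\By\|_2^2/4$ together with $\|f(\Bx)\|_2=\|\Bx\|_2=\sqrt{n}$, and then extend the resulting partial isometry to an orthogonal matrix. The only difference is that the paper asserts the extension step as a standard rigidity fact, whereas you spell out the Gram-matrix argument in full.
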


\begin{proof}(of Proposition \ref{prop:BL})
Note that, if we interpret $\Bx,\By\in \CC_n$\ as points in
$\mathbb{\R}^{n}$, then

$$\Delta\left(  \Bx,\By\right)  =\frac{\|
\Bx-\By\|_2^{2}}{4}.$$

Thus,%
\[
\|f(\Bx)-f(\By)\|_2 = \|\Bx-\By\|_2
\]
for all $\Bx,\By\in S$. \ Clearly we also have $ \|f(\Bx)\|_2 =\|\Bx\|_2 (=\sqrt{n})$\ for all $\Bx\in S$. \ This means that the set
$f\left(  S\right)  $\ is a rigid rotation and/or reflection of the set $S$,
so it must be possible to get from one to the other by applying an orthogonal matrix.
\end{proof}


\section{Characterization of matrices with large score function}\label{section:lemmas}

In line with Question \ref{question:perm} and \ref{question:score}, it is natural to study the exact score function for more general matrices.

\begin{question}\label{question:1}
Assume that $M\in \M(n)$ with $s_0(M) \ge n^{-C}$ for some constant $C>0$. \ What can we say about the structure of $M$?
\end{question}

Let us consider some natural candidates for $M$.

\begin{example}[special matrices of $\{-1,0,1\}$ entries] \label{example:1}
It is clear that if $M$ is a $\{-1,0,1\}$ matrix where each row contains exactly one non-zero entry, then $s_0(M) =1$. \ More generally, one can construct $M$ satisfying $s_0(M) \ge n^{-C}$ from such matrices of size close to $n$ together with other block matrices of extremely small size.
\end{example}


For further examples, we introduce the notion of generalized arithmetic progression (GAP).

\begin{definition}
A set $Q\subset Z$, where $Z$ is an abelian torsion-free group, is a \emph{GAP of rank $r$} if it can be expressed in the form
$$Q= \{\Bg_0+ k_1\Bg_1 + \dots +k_r \Bg_r : K_i \le k_i \le K_i', k_i\in \Z \hbox{ for all } 1 \leq i \leq r\}$$ for some $\Bg_0,\ldots, \Bg_r\in Z$, and some integers $K_1,\ldots,K_r,K'_1,\ldots,K'_r$.

The elements $\Bg_i\in Z$ are the \emph{generators} of $Q$, the numbers $K_i'$ and $K_i$ are the \emph{dimensions} of $Q$. \ We say that $Q$ is \emph{proper} if $|Q| = \prod (K_i'-K_i+1)$. \ If $g_0=0$ and $-K_i=K_i'$ for all $i\ge 1$, we say that $Q$ is {\it symmetric}.
\end{definition}

\begin{example}[Additively perturbed matrices] \label{example:2} One can perturb a matrix from Example \ref{example:1} by elements from a GAP to obtain a matrix $M\in \M(n)$ with large score function. \ Indeed, let $F_0$ be any matrix of size $n$ from Example \ref{example:1}, and let $\Bg_1,\dots,\Bg_r \in \R^n$ be $r$ vectors in $\R^n$, where $r=O(1)$. \ Consider a GAP

$$Q = \{k_1\Bg_1 + \dots +k_r \Bg_r : |k_i| \le K_i\},$$

where $\prod_{i=1}^r (2K_i+1) =n^{O(1)}$.

Choose any $n-1$ elements $\Bu_1,\dots, \Bu_{n-1}$ from $Q$. \ By a standard deviation principle and by the pigeonhole principle, there exists $\Bu_n \in (10\sqrt{n})Q = \{k_1\Bg_1 + \dots +k_r \Bg_r : |k_i| \le 10\sqrt{n} K_i\}$ such that

$$\P_{\Bx\in \CC_n}(\sum_{i=1}^{n-1} x_i\Bu_i+\Bu_n=0)\ge n^{-O(1)}.$$

Let $U$ be the matrix of the column vectors $\Bu_i$ and set $M:=F_0+U$. \ By definition,

$$s_0(M)=\P_{\Bx\in \CC_n}(M\Bx\in \CC_n)\ge \P_{\Bx\in \CC_n}(U\Bx=0) \ge \P_{\Bx\in \CC_n}\left(\sum_{i=1}^{n-1} x_i\Bu_i+\Bu_n=0\right)\ge n^{-O(1)}.$$
\end{example}

It is thus natural to conjecture that the matrices from Example \ref{example:1} and Example \ref{example:2} are essentially the only ones that have large score function. \ We support this conjecture by showing the following.

\begin{definition}
For integers $a,b$, let $\CF_{ab}$ denote the collection of all $\{-1,0,1\}$-matrices of size $a \times b$ where each row contains {\it at most} one non-zero entry.
\end{definition}

\begin{theorem}[Characterization of general matrices, main result]\label{theorem:discrete:main:1}Let $0<\ep<1$ and $C$ be positive constants. Suppose that $M=(m_{ij})_{1\le i,j\le n}\in \M(n)$ satisfies $s_0(M) \ge n^{-C}$ for some positive constant $C>0$. \ Then there exists a submatrix $M'$ of $M$ of size $n_1\times n_2$, with $n_1,n_2=n-O_{C,\ep}(n^{1-\ep})$, and a set of $r=O_{C,\ep}(1)$ vectors $\Bg_1,\dots,\Bg_r\in \R^{n_1}$ such that $M'$ can be written as

$$M'= M''+F,$$

where $F\in \CF_{n_1n_2}$, and the columns of $M''$ belong to a GAP of size $n^{O_{C,\ep}(1)}$ generated by $\Bg_1,\dots,\Bg_r$.
\end{theorem}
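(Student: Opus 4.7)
The plan is to deploy inverse Littlewood-Offord (ILO) theory row by row, then to aggregate the resulting row-level structure into a single column-level GAP. Since $s_0(M)\ge n^{-C}$ forces the marginal $\P_\Bx((M\Bx)_i\in\{-1,1\})\ge n^{-C}$ for every row $i$, pigeonhole produces $v_i\in\{-1,1\}$ with $\P_\Bx(\sum_j m_{ij}x_j=v_i)\ge n^{-C}/2$. This is exactly the regime of the optimal inverse Littlewood-Offord theorem of Nguyen--Vu, which yields, for each $i$, a proper symmetric GAP $Q_i\subset\R$ of rank $r=O_{C,\ep}(1)$ and size $n^{O_{C,\ep}(1)}$ such that all but $O_{C,\ep}(n^{1-\ep})$ of the entries $m_{i1},\dots,m_{in}$ lie in (a translate of) $Q_i$.

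The heart of the argument is to upgrade these row-level GAPs into a single column-level GAP in $\R^{n_1}$, generated by vectors $\Bg_1,\dots,\Bg_r$ common to all good columns. I would first use pigeonhole on the combinatorial shape of the $Q_i$'s (rank, dimension vector, and generator ratios), together with the deletion of $O(n^{1-\ep})$ rows, to isolate a set of $n_1$ rows whose GAPs admit a common coordinate system $g_{i,1},\dots,g_{i,r}$. Each good entry $m_{ij}$ then has the form $\sum_{k=1}^r a_{ij,k}g_{i,k}$ with uniformly bounded integer coefficients $a_{ij,k}$. The critical claim — the technical crux of the proof — is that the coefficient vector $(a_{ij,1},\dots,a_{ij,r})$ depends on the column index $j$ alone. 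Once this column-wise consistency is in hand, defining $\Bg_k:=(g_{i,k})_{i\in[n_1]}\in\R^{n_1}$ places each good column $\Bc_j$ into the vector GAP $\{\sum_k a_k\Bg_k:|a_k|\le K_k\}$, yielding the structured part $M''$.

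Column-wise consistency is the main obstacle; I expect it to be established by exploiting the joint (not just marginal) nature of the event $M\Bx\in\CC_n$. Concretely, for any pair of good rows $i,i'$, the joint small-ball probability $\P_\Bx((M\Bx)_i=v_i,(M\Bx)_{i'}=v_{i'})$ is still $\ge n^{-O_C(1)}$, so a two-dimensional inverse Littlewood-Offord applied to the column pairs $(m_{ij},m_{i'j})_j$ forces them into a rank-$O(1)$ GAP in $\R^2$ whose axis projections are compatible with $Q_i$ and $Q_{i'}$; comparing these projections forces the integer coordinates of $m_{ij}$ and $m_{i'j}$ to agree up to a bounded affine change, and iterating the pairing across good rows produces a single column coefficient vector. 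Finally, a double-counting argument concentrates most of the $O(n^{2-\ep})$ row-level exceptions into at most $O(n^{1-\ep})$ bad columns, which we delete; the residual bad entries — at most one per remaining row, and forced by the equation $\sum_j m_{ij}x_j=v_i$ to take values in $\{-1,0,1\}$ once the GAP contribution is subtracted off — form the matrix $F\in\CF_{n_1n_2}$, completing the decomposition $M'=M''+F$.
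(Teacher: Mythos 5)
Your opening move (pigeonhole on a target sign vector, then the Nguyen--Vu inverse theorem applied row by row) is fine, but the two places where you locate the ``technical crux'' are exactly where the proposal has genuine gaps, and the paper's proof goes a different way at both. First, the column-wise consistency step. Row-by-row application of Theorem \ref{theorem:optimal} gives each row its own rank-$O(1)$ GAP in $\R$, but GAP representations are far from canonical, and your plan to reconcile them pairwise --- applying a two-dimensional inverse theorem to row pairs and ``comparing axis projections'' to force the integer coordinates to agree up to a bounded affine change --- is not an argument: the $2$-dimensional GAP produced for a pair $(i,i')$ has no a priori relation to the $1$-dimensional GAPs produced for $i$ and $i'$ separately, and even granting pairwise compatibility you would still face a nontrivial gluing problem to get one global rank-$r$ system of generators across all rows. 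The paper sidesteps this entirely: it applies Theorem \ref{theorem:optimal} \emph{jointly} to $d$ rows at a time, viewing the restricted columns as elements of the group $Z=\R^d$ (the concentration probability only degrades by $2^{-d}$, harmless for $d\le c\log n$; Lemma \ref{lemma:drows}), which yields for free the crucial fact that the structured part of those rows spans a subspace of dimension $r_0=O(1)$; the structure is then propagated to blocks of up to $n^{1-\ep}$ rows not by consistency-matching but by linear algebra, extending the $r$ generators along the linear dependencies of the remaining rows (Lemma \ref{lemma:mrows} and Claim \ref{claim:extension}). Your scheme never produces the bounded-rank statement, which is what the theorem's conclusion (columns of $M''$ in a GAP generated by $r=O(1)$ vectors of $\R^{n_1}$) actually encodes.

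Second, the construction of $F$. You assert that after deleting $O(n^{1-\ep})$ bad columns the residual bad entries are ``at most one per remaining row'' and lie in $\{-1,0,1\}$, obtained by double counting the inverse-theorem exceptions. This does not follow: each row can have up to $n^{\ep}$ exceptional entries, so the total is up to $n^{1+\ep}$ and no deletion of $O(n^{1-\ep})$ columns (nor any double-counting) reduces this to one exception per row; moreover nothing in the single-row equation forces an exceptional entry to be $\pm1$. In the paper the $\pm1$ entries of $F$ are not the inverse-theorem exceptions at all. They arise from a separate mechanism: the rows are grouped into blocks, the non-base rows are reduced modulo $r_0$ base rows (Corollary \ref{cor:mrows}), the resulting residual matrices $Y_i$ are sparse, and a conditional-probability ``bad subsequence'' argument (Lemma \ref{lemma:blocks:str1} and the decomposition \eqref{eqn:prob:conditional}) shows that for all but $O(\log n)$ blocks the relevant event holds with conditional probability at least $1-\ep$; only then does the forward Littlewood--Offord bound of Erd\H{o}s force each residual row, after removing $O(1)$ more columns, to be zero or to have exactly one entry equal to $\pm1$ (Lemma \ref{lemma:inverse:local}, Lemma \ref{lemma:summary}). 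Without some substitute for this block/conditioning argument, your decomposition $M'=M''+F$ with $F\in\CF_{n_1n_2}$ is not established.
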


When $M$ has nearly full rank, one can easily deduce the following consequence with much more information on $F$.

\begin{corollary}\label{cor:fullrank} Suppose that $M$ satisfies the condition of Theorem \ref{theorem:discrete:main:1} as well as $\rank(M)=n-o(n)$. \ Then one can also assume that each row of $F$ contains exactly one non-zero entry which is either $1$ or $-1$. \ In other words, $F$ is nearly a permutation-reflection matrix modulo a low-rank perturbation.
\end{corollary}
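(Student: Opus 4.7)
The plan is to observe that the GAP containment forces $M''$ to have constant rank, combine this with the near-full rank hypothesis on $M$ to conclude that $F$ has only $o(n)$ zero rows, and then pass to the sub-submatrix of $M'$ consisting of the rows where $F$ is nonzero. Nothing beyond Theorem \ref{theorem:discrete:main:1} itself is really needed, and there is no serious obstacle.

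More precisely, every column of $M''$ lies in the affine subspace $\Bg_0 + \mathrm{span}_\R(\Bg_1, \dots, \Bg_r)$ (where $\Bg_0$ is the implicit shift of the GAP), hence in $\mathrm{span}_\R(\Bg_0, \Bg_1, \dots, \Bg_r)$. This gives $\rank(M'') \le r+1 = O_{C,\ep}(1)$. Since $M'$ is obtained from $M$ by deleting at most $O_{C,\ep}(n^{1-\ep})$ rows and columns, we have
\[
\rank(M') \;\ge\; \rank(M) - O_{C,\ep}(n^{1-\ep}) \;=\; n - o(n)
\]
by the rank hypothesis $\rank(M) = n - o(n)$. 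Subadditivity of rank applied to $F = M' - M''$ now yields $\rank(F) \ge n - o(n)$.

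Since $F \in \CF_{n_1 n_2}$, each row of $F$ is either zero or of the form $\pm \Be_j$ for some $j \in \{1, \dots, n_2\}$, so the number of nonzero rows of $F$ is at least $\rank(F) = n - o(n)$, which means that at most $o(n)$ rows of $F$ are zero. Let $I \subset \{1, \dots, n_1\}$ be the index set of nonzero rows of $F$, and let $M'''$ denote the submatrix of $M'$ obtained by keeping only the rows in $I$. Then $M''' = M''|_I + F|_I$, where the restriction $F|_I$ has every row equal to some $\pm \Be_j$, while the columns of $M''|_I$ still belong to the GAP generated by the restricted vectors $\Bg_i|_I \in \R^{|I|}$. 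Since $|I| = n - o(n)$, the submatrix $M'''$ satisfies all the conclusions of Theorem \ref{theorem:discrete:main:1} together with the additional property claimed in the corollary, so we may simply replace $M'$ by $M'''$ in the decomposition. The only ingredient beyond Theorem \ref{theorem:discrete:main:1} is the elementary observation that a GAP of rank $r$ is contained in an $(r+1)$-dimensional affine subspace.
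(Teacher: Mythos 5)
Your proof is correct and follows essentially the same route the paper intends (and uses explicitly in the analogous step of the proof of Theorem \ref{theorem:discrete:main:2'}): bound $\rank(M'')$ by the GAP rank, use rank subadditivity and the near-full-rank hypothesis to force $\rank(F)=n-o(n)$, and conclude from the at-most-one-nonzero-entry-per-row structure of $F\in\CF_{n_1n_2}$ that only $o(n)$ rows of $F$ vanish, which can be discarded. The only cosmetic caveat is that after discarding those rows the submatrix has $n-o(n)$ rather than $n-O(n^{1-\ep})$ rows, which is exactly what the corollary's weaker ``one can also assume'' formulation permits.
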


We next focus on orthogonal matrices. \ Similarly to Question \ref{question:1}, one would like to characterize orthogonal matrices with large score function.

\begin{question}\label{question:2}
Assume that $M$ is an orthogonal matrix satisfying $ s_0(M)\ge n^{-C}$ for some constant $C>0$. \ What can we say about the structure of $M$?
\end{question}

As suggested by Theorem \ref{theorem:discrete:main:1} and Corollary \ref{cor:fullrank}, it is natural to search for $M$ from the matrices of Example \ref{example:2}: thus are there non-trivial low rank perturbations of an orthogonal matrix which are again orthogonal and  $s_0(M)\ge n^{-C}$? \ The answer is positive, even for rank-one perturbation.

\begin{example}\label{example:3}
Let $\Bu_1=(x,t_2x,\dots,t_nx) \in \R^n$, where $x\neq 0$ and $t_2,\dots, t_n\in \R$ are to be chosen. \ We will select $\Bu_1$ so that $\col_1=\Bu_1+\Be_1$ has norm 1, which hence requires

\begin{equation}\label{eqn:row1:useful}
(x+1)^2 +x^2(t_2^2+\dots+t_n^2)=1, \mbox{ or equivalently, }  \Bu_1 \cdot \Bu_1+ 2x =0.
\end{equation}

For $2\le i\le n$, define $\Bu_i:=t_i \Bu_1$. \ Thus if  $t_i=k_i/N$ with $k_i\in \Z, |k_i|\le n^{O(1)}$ then $\Bu_i$ belongs to the rank-one arithmetic progression $\{k  \Bu_1/N, |k|\le n^{O(1)}\}$.

Set $\col_i:= \Bu_i+\Be_i$, and let $M$ be the matrix of the column vectors $\Bc_i$. \ In other words,

\begin{equation}\label{eqn:M:rank1}
M=(\delta_{ij}+xt_it_j)_{1\le i,j\le n}.
\end{equation}

By definition, it can be verified that

$$\|\col_i\|_2=1, \mbox{ and }\col_i \cdot \col_j =0 , \forall i\neq j.$$

Indeed, for the first part, using \eqref{eqn:row1:useful}

$$\|\col_i\|_2^2 = \Bu_i \cdot \Bu_i + 2 \Bu_i \cdot \Be_i + 1= t_i^2 \Bu_1 \cdot \Bu_1 + 2t_i^2 x +1=1.$$

For the second part, similarly,

$$\col_i \cdot \col_j = \Bu_i \cdot \Bu_j + \Bu_i \cdot \Be_j + \Bu_j \cdot \Be_i = t_it_j \Bu_1 \cdot \Bu_1+ 2xt_it_j=0.$$

Notice that the specific choice of $x=-2/n$ and $t_i=1$ would fulfill \eqref{eqn:row1:useful}, and hence confirms the asymptotic sharpness of Theorem \ref{theorem:orthogonal:2}.

In what follows we describe another natural way to sample the $t_i$.

\begin{itemize}
\item First, select integers $k_2,\dots,k_n\in \Z$ within the range $|k_i| \le n^{O(1)}$ so that  $\P_{\Bx\in \CC_n}(x_1+ \sum_{i=2}^n x_ik_i=0)\ge n^{-O(1)}$. \ (Although this is not true for all choices of $k_i$, it holds for many natural families.)
\vskip .1in
\item Next, choose $x=-1$ and set $t_i:=k_i/N$, where $N^2:=\sum_{i=2}^n k_i^2$.
\end{itemize}

It is clear that \eqref{eqn:row1:useful} is fulfilled, and hence by definition,

$$s_0(M)=\P_{\Bx\in \CC_n}(M\Bx\in \CC_n)\ge \P_{\Bx\in \CC_n}(U\Bx=\0) \ge n^{-O(1)}.$$

\end{example}

As an extension of Theorem \ref{theorem:discrete:main:1} and Corollary \ref{cor:fullrank}, we show that if $M$ is as in Question \ref{question:2}, then it essentially resembles the matrices of Example \ref{example:3}.

\begin{theorem}[Characterization of orthogonal matrices, main result]\label{theorem:discrete:main:2} Let $0<\ep<1$ and $C$ be positive constants. \ Suppose that $M=(m_{ij})_{1\le i,j\le n}\in \O(n)$ satisfies $s_0(M) \ge n^{-C}$ for some positive constant $C>0$. \ Then there exists a submatrix $M'$ of $M$ of size $n\times n_2$, with $n_2=n-O_{C,\ep}(n^{1-\ep})$, and a set of $r=O_{C,\ep}(1)$ vectors $\Bg_1,\dots,\Bg_r\in \R^{n}$ such that $M'$ can be written as

$$M'=M''+F,$$

where $F\in \CF_{nn_2}$ and the columns of $M''$ belong to a GAP of small size generated by $\Bg_1,\dots,\Bg_r$.

Furthermore, the matrix $M''+F$ (modulo appropriate row permutations) when restricting to the first $n_2$ rows can be written as

\begin{equation}\label{eqn:M:rankr}
 \begin{pmatrix}
U & UD^T\\
DU  & DUD^T
\end{pmatrix}
+ I
\end{equation}

where $U$ is a square matrix of size $r$ and $D$ is an $(n_2-r)\times r$ matrix, and $I$ is a diagonal square matrix of size $n_2$ of entries from $\{-1,1\}$.
\end{theorem}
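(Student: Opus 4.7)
The plan is to bootstrap Theorem \ref{theorem:discrete:main:1} and Corollary \ref{cor:fullrank} by using orthogonality to extract the specific rank-$r$ block form. First, I would apply Theorem \ref{theorem:discrete:main:1} to obtain an $n_1\times n_2$ submatrix $M'=M''+F$ with columns of $M''$ in a GAP generated by $\Bg_1,\dots,\Bg_r$. Since $M\in\O(n)$ is full rank, Corollary \ref{cor:fullrank} gives that each row of $F$ has exactly one $\pm1$. A rank argument — two rows of $F$ targeting the same column force their difference to lie in the rank-$r$ row space of $M''$, so too many collisions would collapse the rank of $M$ — ensures (after throwing away at most $O(n^{1-\ep})$ problematic rows/columns) that $F$ is a signed partial permutation. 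Permuting rows, permuting columns, and flipping column signs — all of which preserve orthogonality and $s_0$ — I extend $M'$ to an $n\times n_2$ submatrix and arrange that the top-left $n_2\times n_2$ block of $F$ equals the signed identity $I$.

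Under this arrangement, for $j\in[n_2]$ we may write $\col_j=\Be_j+\Bu_j$ with $\Bu_j\in V:=\mathrm{span}(\Bg_1,\dots,\Bg_r)$ (having absorbed signs into columns). Let $B$ be the $n_2\times n_2$ matrix with $B_{ij}:=(\Bu_j)_i$ — this is exactly the top-left block of $M''$ we want to understand — and $\hat B$ the $(n-n_2)\times n_2$ matrix with $\hat B_{ij}:=(\Bu_j)_{n_2+i}$. Expanding $\col_j\cdot\col_k=\delta_{jk}$ yields
\[
B+B^T=-G,\qquad G:=B^T B+\hat B^T\hat B,
\]
which is the only place orthogonality enters.

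The heart of the argument is to show that $\mathrm{col}(B)=\mathrm{row}(B)$ as subspaces of $\R^{n_2}$, since this is precisely the condition under which $B$ admits a factorization $B=AUA^T$ with $A$ an $n_2\times r$ matrix and $U$ an $r\times r$ matrix. Writing $\Bu_j=\sum_l \lambda_{l,j}\Bg_l$, a direct expansion of $B_{ij}=\sum_l(\Bg_l)_i\lambda_{l,j}$ shows that every row of $B$, and every row of $\hat B$, lies in the span $W\subseteq\R^{n_2}$ of the $r$ coefficient vectors $(\lambda_{l,j})_{j\in[n_2]}$, which has dimension at most $r$. Choosing the generators so that their restrictions to the first $n_2$ coordinates are linearly independent — for instance by replacing them with a basis of $\{\Bu_j\}_{j\in[n_2]}$ — forces $\mathrm{row}(B)=W$. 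Therefore
\[
\mathrm{col}(G)=\mathrm{col}(B^T B)+\mathrm{col}(\hat B^T\hat B)=\mathrm{row}(B)+\mathrm{row}(\hat B)\subseteq W=\mathrm{row}(B),
\]
and the identity $B=-G-B^T$ then forces $\mathrm{col}(B)\subseteq \mathrm{col}(G)+\mathrm{row}(B)=\mathrm{row}(B)$. Since $\dim\mathrm{col}(B)=\dim\mathrm{row}(B)=\rank(B)$, equality holds.

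Setting $V_0:=\mathrm{col}(B)=\mathrm{row}(B)$ of dimension $r$, I pick an $n_2\times r$ basis $A$ of $V_0$. After a further permutation of the first $n_2$ rows so that the top $r$ rows of $A$ form an invertible block — paired with a column permutation to keep $F$ equal to the signed identity on the top block — I normalize $A$ to $\bigl(\begin{smallmatrix}I_r\\ D\end{smallmatrix}\bigr)$, yielding
\[
B=AUA^T=\begin{pmatrix}U & UD^T\\ DU & DUD^T\end{pmatrix},
\]
which is the claimed form. The main obstacle will be the first step — producing the clean $n\times n_2$ decomposition with $F$ a signed identity on the chosen top block while keeping the GAP structure of $M''$ intact — while the algebraic heart of the proof is the orthogonality-forced identity $\mathrm{col}(B)=\mathrm{row}(B)$ derived above.
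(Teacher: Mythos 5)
Your core algebraic step is appealing, but the proposal has a genuine gap at the point where you write $\col_j=\Be_j+\Bu_j$ with $\Bu_j\in V=\mathrm{span}(\Bg_1,\dots,\Bg_r)$ for the \emph{full} columns in $\R^n$. Theorem \ref{theorem:discrete:main:1} only gives the GAP/low-rank structure on the $n_1$ rows of the submatrix $M'$; the remaining $n-n_1=O(n^{1-\ep})$ rows of the chosen columns are a priori completely unconstrained. Those unstructured rows sit inside your matrix $\hat B$, so the assertion that ``every row of $\hat B$ lies in the span $W$ of the $r$ coefficient vectors'' is unjustified, and with it $\mathrm{col}(\hat B^T\hat B)\subseteq W$ and the whole chain $\mathrm{col}(B)\subseteq\mathrm{row}(B)$ collapses. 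Extending the structure from the $n_1$ structured rows to all $n$ rows is precisely the content of the paper's Theorem \ref{theorem:discrete:main:2'}, and it is not bookkeeping: the paper runs an iterative selection of column tuples whose $M''$-parts are dependent but whose lower parts $\Bu_i\in\R^{n-n_1}$ are not, shows by orthogonality of $M$ (and the sparsity of $F$) that the resulting nonzero vectors in $\R^{n-n_1}$ are pairwise orthogonal, hence the process stops within $n-n_1$ steps, and only then extends the GAP to the full columns at the cost of $O(n^{1-\ep})$ further columns. Your proposal also never produces the GAP of small size for the full columns required by the first half of the statement (you only ever use linear spans), which again needs this extension argument.

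A secondary gap: the claim that replacing the generators by a basis of $\{\Bu_j\}_{j\in[n_2]}$ ``forces $\mathrm{row}(B)=W$'' fails in the degenerate case where the span of the $\Bu_j$ contains a nonzero vector vanishing on the first $n_2$ coordinates; then no choice of basis has independent restrictions, $\mathrm{row}(B)\subsetneq W$, and you cannot conclude $\mathrm{row}(\hat B)\subseteq\mathrm{row}(B)$. The paper deals with exactly this non-degeneracy issue in Claim \ref{claim:regularization}, paying $O(n^{1-\ep})$ columns per round for at most $r$ rounds. Granting those two preparatory facts, your endgame is correct and is a genuinely different (and arguably cleaner) route than the paper's: the paper pins down the block form by the pointwise orthogonality computation $(\Bu_i+\Be_i)\cdot(\Bu_k+\Be_k)=0$, showing the coefficients of $\Bu_i$ in terms of $\Bu_1,\dots,\Bu_r$ coincide with those of the $i$-th row restricted to the first $r$ coordinates, whereas you derive $B+B^T=-(B^TB+\hat B^T\hat B)$ globally and read off $\mathrm{col}(B)=\mathrm{row}(B)$, hence $B=AUA^T$. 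But as it stands the proposal proves the block form only under the unproved hypothesis that the full columns minus the signed identity live in an $r$-dimensional space, so the main orthogonality work of the paper is missing rather than replaced.
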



We remark that \eqref{eqn:M:rankr} is a generalization of \eqref{eqn:M:rank1}, and it is not hard to construct small rank perturbations of type \eqref{eqn:M:rankr} of permutation-reflection matrices which are again orthogonal and have large score function.

One of the main tools of our treatment is an inverse-type Littlewood-Offord result, which was proved in \cite{TV1,TV2} by Tao and Vu. \ Here we state a version from \cite[Theorem 2.5]{NgV}.

\begin{theorem}\label{theorem:optimal} Let $0<\ep<1$ and $C$ be positive constants. \ Assume that $\Ba_1,\dots,\Ba_n$ are elements of an abelian torsion-free group $Z$ such that

$$\rho(\Ba_1,\dots,\Ba_n):=\sup_{\Ba\in Z} \P_{\Bx\in \CC_n}\left(\sum_{i=1}^n x_i \Ba_i =\Ba\right)  \ge  n^{-C}.$$

Then there exists a proper symmetric GAP $Q\subset Z$ which contains all but at most $n^\ep$
elements of $\Ba_1,\dots,\Ba_n$. \ Furthermore,

\begin{itemize}
\item the rank $r$ of $Q$ is bounded by a constant, $r= O_{C, \ep} (1)$,
\vskip .1in
\item the size of $Q$ is small, $|Q| = O_{C, \ep} (\rho^{-1} n^{-\ep r/2}).$
\end{itemize}
\end{theorem}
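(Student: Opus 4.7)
The plan is to prove this optimal inverse Littlewood--Offord theorem by Fourier analysis combined with a Bohr-set/duality argument, following Tao--Vu with the quantitative refinements of Nguyen--Vu.

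\textbf{Step 1 (Fourier reduction).} Only finitely many elements and their bounded integer combinations are relevant, so the subgroup of $Z$ generated by $\Ba_1,\dots,\Ba_n$ is finitely generated and torsion-free, hence embeds in some $\R^d$. Esseen's multidimensional concentration inequality then gives
$$\rho \ll_d \int_{[-1/2,1/2]^d} \prod_{i=1}^n \bigl|\cos(\pi\langle \xi,\Ba_i\rangle)\bigr|\,d\xi.$$
Dyadic pigeonholing in the integrand produces a level $2^{-M}$ with $M = O_C(\log n)$ such that the super-level set
$$T := \Bigl\{\xi \in [-1/2,1/2]^d : \prod_i |\cos(\pi\langle \xi,\Ba_i\rangle)| \ge 2^{-M}\Bigr\}$$
has measure $|T| \gg \rho/M \ge n^{-C-o(1)}$. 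From the elementary bound $|\cos(\pi x)| \le \exp(-2\|x\|_{\R/\Z}^2)$ it follows that every $\xi \in T$ satisfies $\sum_i \|\langle \xi,\Ba_i\rangle\|^2 \ll M$, so $T$ is essentially a Bohr set cut out by the $\Ba_i$'s.

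\textbf{Step 2 (GAP extraction by duality).} I would greedily choose a maximal sequence $\xi_1,\dots,\xi_r \in T$ such that each $\xi_{j+1}$ is ``genuinely new'' modulo $\xi_1,\dots,\xi_j$ at scale $\delta := n^{-\ep/2}$ when tested against the $\Ba_i$'s. A volume-packing argument inside $T$ shows that each new $\xi_{j+1}$ shrinks the available volume by at least a factor of $\delta$, so since $|T| \ge n^{-C-o(1)}$ the process terminates after $r = O_{C,\ep}(1)$ steps. The dual object
$$Q := \bigl\{\Ba \in Z :\ \forall j\le r\ \exists k_j \in \Z,\ |\langle\xi_j,\Ba\rangle - k_j|\le \delta\bigr\}$$
is a symmetric GAP of rank $r$ with generators dual to $\xi_1,\dots,\xi_r$, and is made proper by a routine dimension-reduction step. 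Its size is then $|Q| \asymp |T|^{-1} \ll \rho^{-1} n^{-\ep r/2}$, where the extra $n^{-\ep r/2}$ saving comes from the $\delta = n^{-\ep/2}$ saturation in each of the $r$ directions.

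\textbf{Step 3 (Exceptional set).} If $\Ba_i \notin Q$ then $\|\langle \xi_j,\Ba_i\rangle\| > \delta$ for some $j$, contributing $\ge \delta^2$ to the Bohr constraint. Averaging $\sum_i \|\langle \xi,\Ba_i\rangle\|^2 \ll M$ over $\xi \in T$ gives $|\{i : \Ba_i \notin Q\}| \ll M/\delta^2 \le n^{\ep}$ after optimizing the implicit constants (choosing $\ep$ slightly smaller in intermediate steps).

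\textbf{Main obstacle.} The delicate point is Step 2: arranging the greedy choice so that $Q$ is simultaneously \emph{proper}, of \emph{bounded rank}, and of the \emph{sharp size} $\rho^{-1} n^{-\ep r/2}$. A naive route via Balog--Szemer\'edi--Gowers followed by Freiman's theorem does produce a GAP, but loses polynomial factors in the size bound. The sharp form requires using the Bohr set $T$ itself as the dual of $Q$, and matching the scales $\delta = n^{-\ep/2}$ and $M = O(\log n)$ so that each step in the chain---the dyadic pigeonhole, the volume-packing, and the Markov bound in Step 3---loses only a subpolynomial factor. This optimization is exactly what distinguishes the Nguyen--Vu ``optimal'' version from the original Tao--Vu theorem.
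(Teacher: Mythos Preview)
The paper does not prove this theorem. It is quoted as a black-box input: immediately after the statement the authors write that it ``was proved in \cite{TV1,TV2} by Tao and Vu'' and that the version stated is \cite[Theorem~2.5]{NgV}. So there is nothing in the paper to compare your proposal against; your task here was only to cite the result, not to reprove it.

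That said, your sketch is in the right spirit for how the Nguyen--Vu argument actually goes---Fourier/Esseen reduction to a level set, then extracting additive structure dual to that set---but several of the steps as written would not close. In Step~2 your $Q$ is defined as a dual Bohr set in $Z$, which is not literally a GAP; passing from ``the $\Ba_i$ are nearly annihilated by $r$ frequencies'' to ``the $\Ba_i$ lie in a rank-$r$ proper GAP of the stated size'' is the substantive part of the Nguyen--Vu paper and requires a covering/embedding argument (their ``long range inverse theorem'' machinery), not just a one-line duality. Your volume-packing claim that each new $\xi_{j+1}$ shrinks $|T|$ by a factor $\delta$ is asserted without a mechanism, and the size bound $|Q|\asymp |T|^{-1}$ in Step~2 conflates the measure of a Bohr set with the cardinality of its dual GAP, which are related but not by a bare reciprocal. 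Finally, in Step~3 the inequality $M/\delta^2 \le n^{\ep}$ reads $O(\log n)\cdot n^{\ep}\le n^{\ep}$, which is off by a logarithm; the actual argument absorbs this by choosing parameters with a little slack, but as written the arithmetic does not balance.
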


It is crucial to remark that the implied constants are independent of the group $Z$. \ Furthermore, by following the proof of \cite[Theorem 2.5]{NgV}, these constants are $O\left(2^{2^{2^{\alpha(C/\ep)}}}\right)$ at most, where $\alpha$ is an absolute constant. \ Consequently, the dependent constants under $O_{C,\ep}(.)$ in Theorem \ref{theorem:discrete:main:1} and Theorem \ref{theorem:discrete:main:2} can be taken to be $O\left(2^{2^{2^{\alpha (C/\ep) }}}\right)$. \ However, as these dependencies are not our current focus, we will skip the details to ease the presentation.


{\bf Notation.}
For a matrix $M$, $\row_i(M), \col_j(M)$ denote the $i$-th row and $j$-th column respectively. \ For a vector $\Bv\in \R^n$, $(\Bv)_i$ denotes its $i$-th component. \ For an index set $T\subset [n]$, $\Bv^{[T]}$ represents the subvector of $\Bv$ of components indexing in $T$.

For short, we say that a symmetric proper GAP $P=\{\sum_{i=1}^r k_i \Bg_i\}$ has small size and bounded rank if $|P|=n^{O(1)}$ and $\rank(P)=O(1)$. \ We say that $r$ elements $\Bx_1=k_{11}\Bg_1+\dots,k_{1r}\Bg_r, \dots, \Bx_r=k_{r1}\Bg_1+\dots,k_{rr}\Bg_r$ span $P$ if the corresponding vectors $$(k_{11},\dots,k_{1r}),\dots, (k_{r1},\dots,k_{rr})$$ have full rank in $\R^r$.

The rest of the paper is organized as follows. \ We will introduce some key lemmas in Section \ref{section:lemmas}, and prove Theorem \ref{theorem:discrete:main:1} and Theorem \ref{theorem:discrete:main:2} in Section \ref{section:general} and Section \ref{section:orthogonal} respectively. \ Before concluding the paper with a problem section (Section \ref{section:problem}) and a remark on stochastic matrices (Appendix \ref{section:stochastic}), we  give two applications: one shows that general matrices with sufficiently small entries are not near invariant with respect to the hypercube (Section \ref{section:application:1}), and one deduces Theorem \ref{theorem:orthogonal:2} (Section \ref{section:application:2}).


\section{Structural relation between rows and columns}\label{section:lemmas}

Assume that $M\in \M(n)$ satisfies the condition $s_0(M)\ge n^{-C}$ of Theorem \ref{theorem:discrete:main:1}. \ By Theorem \ref{theorem:optimal}, it is not hard to show that for any row $\row_i=(m_{i1},\dots,m_{in})$ of $M$, all but $n^\ep$ of the components $m_{ij}$ belong to a GAP of bounded rank and small size. \ In the lemma below we slightly improve this result for collections of several rows.

For any $d$ indices $1\le i_1<\dots < i_d\le n$, let $\col_1^{[i_1,\dots,i_d]},\dots,\col_n^{[i_1,\dots,i_d]}$ be the column vectors of the $d\times n$ submatrix spanned by $\row_{i_1}(M),\dots, \row_{i_d}(M)$ of $M$. \ We will prove the following.

\begin{lemma}[Structure for row and column vectors I. ]\label{lemma:drows}  Let $0<\ep<1$ and $C$ be positive constants. \ Let $M$ be a matrix  with $s_0(M)\ge n^{-C}$. Assume that $1\le d \le c\log n$, where $c>0$ is a  constant, and let $1\le i_1<\dots <i_d\le n$ be any $d$ indices. \ Then there exist an exceptional index set $I_{i_1,\dots,i_d}\subset [n]$ of size at most $n^\ep$, and a GAP $Q_{i_1,\dots i_d}\subset \R^d$ which contains all $\col_i^{[i_1,\dots,i_d]}, i\in \bar{I}_{i_1,\dots,i_d}$, and such that

$$|Q|=O(2^d n^C/n^{r\ep/2}).$$

As a consequence,

\begin{enumerate}[(i)]
\item the rank $r$ of $Q$ is bounded, $\rank(Q)=O(1)$;
\vskip .1in
\item the rows of the matrix generated by $\col_i^{[i_1,\dots,i_d]}, i\in \bar{I}_{i_1,\dots,i_d}$, span a subspace of dimension at most $\rank(Q)$.
\end{enumerate}
\end{lemma}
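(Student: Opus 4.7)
The strategy is to transfer the concentration assumption $s_0(M) \ge n^{-C}$ from the full matrix $M$ to the $d\times n$ submatrix indexed by the rows $i_1,\dots,i_d$, and then apply the inverse Littlewood--Offord theorem (Theorem \ref{theorem:optimal}) to the column vectors of that submatrix viewed as elements of the torsion-free group $\R^d$.

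First, I would observe that if $M\Bx \in \CC_n$, then in particular the $d$-coordinate vector $((M\Bx)_{i_1},\dots,(M\Bx)_{i_d})$ lies in $\CC_d$. In terms of the column vectors of the submatrix, this reads
$$\sum_{j=1}^n x_j \col_j^{[i_1,\dots,i_d]} \in \CC_d.$$
Hence $\Pr_{\Bx\in \CC_n}\bigl(\sum_j x_j \col_j^{[i_1,\dots,i_d]} \in \CC_d\bigr) \ge s_0(M) \ge n^{-C}$. Since $|\CC_d| = 2^d$, the pigeonhole principle produces some $\Bv \in \CC_d$ with
$$\Pr_{\Bx\in \CC_n}\Bigl(\sum_{j=1}^n x_j \col_j^{[i_1,\dots,i_d]} = \Bv\Bigr) \ge \frac{n^{-C}}{2^d}.$$
This is precisely a lower bound on the small-ball probability $\rho(\col_1^{[i_1,\dots,i_d]},\dots,\col_n^{[i_1,\dots,i_d]})$ in the ambient group $Z = \R^d$. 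Because $d \le c\log n$, the factor $2^d$ is only polynomial in $n$, so $\rho \ge n^{-C-c\log 2}$ still qualifies as inverse-polynomial concentration.

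Next I would apply Theorem \ref{theorem:optimal} directly with this $\rho$, getting a proper symmetric GAP $Q_{i_1,\dots,i_d} \subset \R^d$ of rank $r = O_{C,\ep}(1)$ and size
$$|Q_{i_1,\dots,i_d}| = O\!\bigl(\rho^{-1}\, n^{-\ep r/2}\bigr) = O\!\bigl(2^d n^C / n^{\ep r/2}\bigr),$$
which contains all but at most $n^\ep$ of the column vectors $\col_j^{[i_1,\dots,i_d]}$. The exceptional index set is then $I_{i_1,\dots,i_d}$. Item (i) is immediate from the bound on the rank of $Q$ given by Theorem \ref{theorem:optimal}. For item (ii), the columns of the $d \times |\bar I_{i_1,\dots,i_d}|$ submatrix all lie in $Q_{i_1,\dots,i_d}$, hence in the real span of the $r$ generators of $Q_{i_1,\dots,i_d}$, which is an $r$-dimensional subspace of $\R^d$; thus the column rank of that submatrix is at most $r$, and equality of row rank and column rank gives the row-span dimension bound.

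The main (and in fact only) subtlety is checking that the $2^d$ loss from pigeonhole does not destroy the inverse-polynomial hypothesis needed to invoke Theorem \ref{theorem:optimal}; this is exactly why the statement imposes $d \le c\log n$, and why the stated size bound for $Q$ carries an explicit factor of $2^d$. Everything else is a direct invocation of the inverse Littlewood--Offord machinery.
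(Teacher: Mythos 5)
Your proposal is correct and follows essentially the same route as the paper: project onto the coordinates $i_1,\dots,i_d$, note the concentration probability of the restricted columns is at least $n^{-C}2^{-d}$ (inverse-polynomial since $d\le c\log n$), and invoke Theorem \ref{theorem:optimal}, with (ii) following because the columns lie in the span of the $r$ generators of $Q$.
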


\begin{proof}(of Lemma \ref{lemma:drows}) As $\P_{\Bx\in \CC_n}(M\Bx\in \CC_n)\ge n^{-C}$, by applying the projection $\pi_{i_1,\dots,i_d}$ (mapping $\R^n$ onto the components of indices $i_1,\dots,i_d$), the column vectors $\col_1^{[i_1,\dots,i_d]},\dots,\col_n^{[i_1,\dots,i_d]}$ have large concentration probability $\rho$,

$$\rho(\col_1^{[i_1,\dots,i_d]},\dots,\col_n^{[i_1,\dots,i_d]}) \ge n^{-C}2^{-d}.$$

The conclusion of Lemma \ref{lemma:drows} then follows by applying Theorem \ref{theorem:optimal} to these column vectors.
\end{proof}

We next show that the result of Lemma \ref{lemma:drows} can be extended to collections of as many as $n^{1-\ep}$ rows. \ Let $r_0=O_{C,\ep}(1)$ be an upper bound for all $\rank(Q)$ (applied to all $d$ indices $i_1,\dots,i_d$) from Lemma \ref{lemma:drows}.


\begin{lemma}[Structure for row  and column vectors II. ]\label{lemma:mrows}  Let $1\le k\le n^{1-\ep}$ and $r_0\le d\le c\log n$. \ Consider any $m$ indices $1\le i_1<\dots <i_m\le n$, where $m=k(d-r_0)+d$. \ Then for the rows $\row_{i_1},\dots,\row_{i_m}$ of $M$, there exist an exceptional index set $I_{i_1\dots i_m}\subset [n]$ of size at most $(k+1)n^{\ep}$ and a GAP $Q_{i_1,\dots, i_m}\subset \R^m$ of rank $r\le r_0$ such that the following holds.

\begin{enumerate}[(i)]
\item $Q_{i_1,\dots, i_m}\subset \R^m$ contains all $\col_i^{[i_1,\dots,i_m]}, i\in \bar{I}_{i_1,\dots,i_m}$, and

$$|Q_{i_1,\dots,i_m}|=O(2^d n^C/n^{r\ep/2}).$$

\item The row vectors of the submatrix spanned by $\col_i^{[i_1,\dots,i_m]}, i\in \bar{I}_{i_1,\dots,i_m}$, span a subspace of dimension at most $r_0$.

\end{enumerate}
\end{lemma}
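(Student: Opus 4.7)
The plan is to prove Lemma~\ref{lemma:mrows} by induction on $k$, adding a fresh batch of $d-r_0$ rows at each step while keeping the combined GAP rank pinned at $r_0$. The base case $k=0$ is just Lemma~\ref{lemma:drows} applied to $i_1,\dots,i_d$. Inductively, suppose we have already produced an exceptional set $I^{(k-1)}\subset[n]$ with $|I^{(k-1)}|\le k\,n^{\ep}$ and a symmetric GAP $Q^{(k-1)}\subset\R^{(k-1)(d-r_0)+d}$ of rank at most $r_0$ containing all projected columns of the first $m'=(k-1)(d-r_0)+d$ indices for $i\in\bar I^{(k-1)}$. By property (ii) at the previous stage, the submatrix indexed by those old rows and the columns in $\bar I^{(k-1)}$ has row rank at most $r_0$; fix $r_0$ row indices $j_1,\dots,j_{r_0}$ among them whose corresponding rows form a basis on $\bar I^{(k-1)}$. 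The linear dependencies that express each old row in terms of these $r_0$ basis rows on $\bar I^{(k-1)}$ give a fixed linear map $T:\R^{r_0}\to\R^{m'}$ with $\col_i^{[\text{old}]}=T\bigl(\col_i^{[\text{basis}]}\bigr)$ for every $i\in\bar I^{(k-1)}$.

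For the inductive step, apply Lemma~\ref{lemma:drows} to the $d$-tuple consisting of the $r_0$ basis indices $j_1,\dots,j_{r_0}$ together with the next $d-r_0$ indices $i_{m'+1},\dots,i_{m'+d-r_0}$. This yields an exceptional set $I'\subset[n]$ of size at most $n^{\ep}$ and a symmetric GAP $Q'\subset\R^d$ of rank $r\le r_0$ with $|Q'|=O(2^d n^{C}/n^{r\ep/2})$ containing each $\col_i^{[\text{basis}\cup\text{new}]}$ for $i\notin I'$. Set $I^{(k)}=I^{(k-1)}\cup I'$, so $|I^{(k)}|\le(k+1)n^{\ep}$. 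For $i\in\bar I^{(k)}\subset\bar I^{(k-1)}$, the full projected column $\col_i^{[\text{old}\cup\text{new}]}\in\R^m$ (with $m=k(d-r_0)+d$) is the image of $\col_i^{[\text{basis}\cup\text{new}]}\in Q'$ under the linear map $\Phi:(\Bq_{\text{basis}},\Bq_{\text{new}})\mapsto\bigl(T(\Bq_{\text{basis}}),\Bq_{\text{new}}\bigr)$ from $\R^d$ into $\R^m$, where we reinsert the basis coordinates into their correct slots. Taking $Q_{i_1,\dots,i_m}:=\Phi(Q')$ gives a symmetric GAP of rank at most $r\le r_0$ whose size does not exceed $|Q'|$, which is precisely the bound claimed in (i). Property (ii) then follows automatically: a symmetric GAP of rank at most $r_0$ lies in a linear subspace of dimension $\le r_0$, so the column span (hence the row span) of the $m\times|\bar I^{(k)}|$ submatrix has dimension at most $r_0$.

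The main obstacle, and the point where the construction has to be pressed against Lemma~\ref{lemma:drows}, is preventing the rank of the combined GAP from growing linearly in $k$. The basis-selection step is exactly what blocks this: by feeding the same $r_0$ anchor rows into every call of Lemma~\ref{lemma:drows}, the freshly added $d-r_0$ rows are forced to be linearly accounted for by the old row space on the surviving columns, so the $\R^m$-valued columns live in a linear image of a rank-$\le r_0$ GAP in $\R^d$ rather than in a product of $k$ independent GAPs. A minor but real bookkeeping point is that the linear map $T$ is defined from the old submatrix on $\bar I^{(k-1)}$ and is only guaranteed to act correctly on the smaller set $\bar I^{(k)}$; this is harmless since $\bar I^{(k)}\subset\bar I^{(k-1)}$. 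The constraint $d\le c\log n$ is needed only to keep the prefactor $2^d$ subpolynomial in the size bound, and the constraint $k\le n^{1-\ep}$ just guarantees that the cumulative exceptional set $(k+1)n^{\ep}$ remains $O(n^{1-\ep/2})$ and in particular never engulfs all of $[n]$.
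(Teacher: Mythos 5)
Your proposal is correct and follows essentially the same route as the paper: iterate Lemma~\ref{lemma:drows} on $r_0$ carried-forward basis rows plus $d-r_0$ fresh rows, take the union of the exceptional sets (giving the $(k+1)n^{\ep}$ bound), and extend the final rank-$\le r_0$ GAP from the basis coordinates to $\R^m$ via the linear dependencies — your map $\Phi$ is exactly the paper's ``structure extension'' claim. The only difference is organizational (induction on $k$ with the GAP maintained at every stage, and (ii) deduced from the GAP containment, rather than running the process first and building the GAP only at the last step), which does not change the substance.
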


\begin{proof}(of Lemma \ref{lemma:mrows})
For (ii), we first apply Lemma \ref{lemma:drows} to the first $d$ rows $\row_{i_1},\dots, \row_{i_d}$ to obtain an exceptional set $I_{i_1,\dots,i_d}$ and a collection of $r_0$ rows which span the subspace of the rows of the matrix generated by $\col_{i}^{[i_1,\dots,i_d]}, i\in \bar{I}_{i_1,\dots, i_d}$. \ We next add to these $r_0$ rows another $d-r_0$ ones among the remaining $\row_{i_{d+1}},\dots,\row_{i_m}$ and apply Lemma \ref{lemma:drows} again. \ Iterate this process $k+1$ times and let $I=I_{i_1,\dots, i_m}$ be the union of the exceptional sets from each step. Hence $|I|\le (k+1) n^\ep$. \ One can check that by definition the row vectors of the matrix spanned by $\col_i^{[i_1,\dots,i_m]}, i\in \bar{I}_{i_1,\dots,i_m}$, span a subspace of dimension $r\le r_0$.

For (i), we will show that the  column vectors $\col_i^{[i_1,\dots,i_m]}$ belong to a GAP by restricting to the projection of $\R^n$ onto the components of indices from $\bar{I}_{i_1,\dots,i_m}$. \ At the last step of the above process, choose $r\le r_0$ rows $\row_{j_1},\dots, \row_{j_{r}}$ that span the subspace generated by the lastly considered $d$ rows, and hence by definition of the process, they also span the subspace generated by all $m$ rows (restricting to the components of indices from $\bar{I}_{i_1,\dots,i_m}$).

After the application of Lemma \ref{lemma:drows} at this last step, the columns $\col_{i}^{[j_1,\dots,j_{r}]}, i\in \bar{I}_{i_1,\dots,i_m}$, of the matrix corresponding to $\row_{j_1},\dots, \row_{j_{r}}$ belong to a GAP $Q_{r}\subset \R^{r}$ of rank $r\le r_0$ and of small size. \ Let $\Bg_1^{[j_1,\dots,j_r]},\dots, \Bg_r^{[j_1,\dots,j_r]} \in \R^r$ be the generators of $Q_r$. \ We claim that this structure can be extended to $\R^m$ to contain all the extension vectors $\col_{i}^{[i_1,\dots,i_m]}$ of $\col_{i}^{[j_1,\dots,j_r]}$. \ Recall that

\begin{enumerate}
\item As by (ii), for any $i\in \{i_1,\dots,i_m\}$, there exist real coefficients $\alpha_{i1},\dots, \alpha_{ir} \in \R$ such that the restricted row vector $\row_i^{[\bar{I}_{i_1,\dots,i_m}]} \in \R^{n-|I_{i_1,\dots,i_m}|}$ satisfies

$$\row_i^{[\bar{I}_{i_1,\dots,i_m}]}= \alpha_{i1} \row_{j_1}^{[\bar{I}_{i_1,\dots,i_m}]} + \dots+ \alpha_{ir} \row_{j_r}^{[\bar{I}_{i_1,\dots,i_m}]}.$$

\vskip .1in

\item  For any $i \in \bar{I}_{i_1,\dots,i_m}$,  there exist integral coefficients $k_{i1},\dots, k_{ir} \in \R$ with $|k_{i1}\dots k_{ir}| \le |Q_r|$ such that the column vector $\col_i^{[j_1,\dots,j_r]}$ satisfies

$$\col_i^{[j_1,\dots,j_r]} =k_{i1} \Bg_{1}^{[j_1,\dots,j_r]} + \dots + k_{ir} \Bg_{r}^{[j_1,\dots,j_r]}.$$

\end{enumerate}

\begin{claim}[Structure extension]\label{claim:extension}
There exists a GAP $Q_m$ in $\R^m$ that has the same rank and size as $Q_r$ which contains the extension $\col_{i}^{[i_1,\dots,i_m]}$ of $\col_{i}^{[j_1,\dots,j_r]}$, where $i\in \bar{I}_{i_1,\dots,i_m}$,.
\end{claim}

\begin{proof}(of Claim \ref{claim:extension}) First, by using the coefficients $\alpha_{ij}$ from (1), we can extend the vectors $\Bg_i^{[j_1,\dots,j_r]}$ to the corresponding vector $\Bg_i^{[i_1,\dots,i_m]}$ in $\R^m$; these new vectors in $\R^m$ will serve as the generators of $Q_m$.

Let $\col^{[i_1,\dots,i_m]}_i$ be any column vector, where $i\in \bar{I}_{i_1,\dots,i_m}$. \ By (2),

$$\col_i^{[j_1,\dots,j_r]} =k_{i1} \Bg_{1}^{[j_1,\dots,j_r]} + \dots + k_{ir} \Bg_{r}^{[j_1,\dots,j_r]}.$$

By the definition of extension, we also have

$$\col_i^{[i_1,\dots,i_m]} =k_{i1} \Bg_{1}^{[i_1,\dots,i_m]} + \dots + k_{ir} \Bg_{r}^{[i_1,\dots,i_m]}.$$
\end{proof}
It is clear that $Q_m$ has the same rank and size as $Q_r$, and thus $r=\rank(Q_m)\le r_0$ and

$$|Q_m|=O(2^d n^C/n^{r\ep/2}).$$
\end{proof}

We now deduce a useful corollary of Lemma \ref{lemma:mrows}. \ Let $H$ be the subspace obtained from (ii) of Lemma \ref{lemma:mrows}, and let $\row_{l_1}^{[\bar{I}_{i_1,\dots,i_m}]} ,\dots, \row_{l_r}^{[\bar{I}_{i_1,\dots,i_m}]}, l_1,\dots, l_r \in \{i_1,\dots,i_m\}, r=\dim(H)$ be any vectors that span $H$; we will refer to them as {\it base vectors}. \ By definition, we have the following.

\begin{claim}\label{claim:H:representation} For all $\row_{i}, i\in \{i_1,\dots,i_m\}$, the following holds: there exist real numbers $t_{i1},\dots, t_{ir}$ such that

\begin{equation}\label{eqn:representation}
\row_i^{[\bar{I}_{i_1,\dots,i_m}]} = \sum_{j=1}^r t_{ij} \row_{l_j}^{[\bar{I}_{i_1,\dots,i_m}]}.
\end{equation}
\end{claim}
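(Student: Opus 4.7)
The plan is to observe that Claim \ref{claim:H:representation} follows essentially by unwrapping the definitions, once the notation introduced by Lemma \ref{lemma:mrows}(ii) is correctly identified.

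First I would note the following translation. For each $k\in\{1,\dots,m\}$ and each $i\in\bar I_{i_1,\dots,i_m}$, the $k$-th entry of the column vector $\col_i^{[i_1,\dots,i_m]}$ is exactly $m_{i_k,i}$. Consequently, the $k$-th row of the $m\times(n-|I_{i_1,\dots,i_m}|)$ submatrix whose columns are $\col_i^{[i_1,\dots,i_m]}$ (for $i\in \bar I_{i_1,\dots,i_m}$) equals the restricted row vector $\row_{i_k}^{[\bar I_{i_1,\dots,i_m}]}$. So the subspace $H$ of Lemma \ref{lemma:mrows}(ii) is precisely the span of the vectors $\row_{i}^{[\bar I_{i_1,\dots,i_m}]}$ for $i\in\{i_1,\dots,i_m\}$, and its dimension $r$ satisfies $r\le r_0$.

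Next, the base vectors $\row_{l_1}^{[\bar I_{i_1,\dots,i_m}]},\dots,\row_{l_r}^{[\bar I_{i_1,\dots,i_m}]}$ are chosen by definition to span $H$, and since $\dim H=r$ they form a basis of $H$. Because each $\row_i^{[\bar I_{i_1,\dots,i_m}]}$ (for $i\in\{i_1,\dots,i_m\}$) lies in $H$, it admits a unique expansion
\[
\row_i^{[\bar I_{i_1,\dots,i_m}]} \;=\; \sum_{j=1}^r t_{ij}\,\row_{l_j}^{[\bar I_{i_1,\dots,i_m}]},
\]
with real coefficients $t_{ij}\in\R$. This establishes \eqref{eqn:representation}.

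There is no genuine obstacle here; the only thing to be careful about is the notation, in particular keeping track of the fact that ``rows of the submatrix generated by the column vectors $\col_i^{[i_1,\dots,i_m]}$'' is the same data as the restrictions of $\row_{i_1},\dots,\row_{i_m}$ to coordinates in $\bar I_{i_1,\dots,i_m}$. Once this is pointed out, the claim is an immediate restatement of Lemma \ref{lemma:mrows}(ii) and the definition of the base vectors.
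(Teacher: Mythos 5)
Your proof is correct and takes essentially the same route as the paper, which dispatches this claim with ``By definition'': the subspace $H$ in Lemma \ref{lemma:mrows}(ii) is exactly the span of the restricted rows $\row_{i}^{[\bar{I}_{i_1,\dots,i_m}]}$, $i\in\{i_1,\dots,i_m\}$, and the base vectors $\row_{l_1}^{[\bar{I}_{i_1,\dots,i_m}]},\dots,\row_{l_r}^{[\bar{I}_{i_1,\dots,i_m}]}$ are chosen to span $H$ with $r=\dim H$, so every restricted row admits the expansion \eqref{eqn:representation}. Your explicit identification of the rows of the submatrix formed by the columns $\col_i^{[i_1,\dots,i_m]}$, $i\in\bar{I}_{i_1,\dots,i_m}$, with these restricted row vectors is precisely the bookkeeping the paper leaves implicit.
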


One can also rewrite \eqref{eqn:representation} in a simple matrix form. \ Let $M_{i_1,\dots,i_m}$ be the invertible matrix of order $n$ obtained from $I_n$ by replacing its $i$-th rows, with $i\in \{i_1,\dots,i_m\}$, by the vector

$$(0,\dots, 0,-t_{i1},0,\dots, 0, \dots,-t_{ir},0,\dots,0,1,0,\dots, 0).$$

In other words, the matrix $M_{i_1,\dots,i_m}$ acts on $M$ by fixing every row except those $\row_i$ with $i\in \{i_1,\dots,i_m\}$ in which case

$$M_{i_1,\dots,i_m}:\row_i(M) \rightarrow \row_i(M)-\sum_{j=1}^r t_{ij} \row_{l_j}(M).$$

\begin{corollary}\label{cor:mrows} With the definition of $M_{i_1,\dots,i_m}$ as above, for any $i\in \{i_1,\dots,i_m\}$, the projections of the row vectors of the product matrix $M_{i_1,\dots,i_m} M$ onto their components of indices in $\bar{I}_{i_1,\dots,i_n}$ vanish:

$$\row_i^{[\bar{I}_{i_1,\dots,i_m}]}(M_{i_1,\dots,i_m} M)=\mathbf{0}, \forall i\in \{i_1,\dots,i_m\}.$$

\end{corollary}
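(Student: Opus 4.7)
The corollary is essentially a direct bookkeeping check: the matrix $M_{i_1,\dots,i_m}$ has been engineered precisely so that left-multiplication executes, simultaneously for every $i\in\{i_1,\dots,i_m\}$, the row operation
$$\row_i(M)\ \longmapsto\ \row_i(M)-\sum_{j=1}^{r} t_{ij}\,\row_{l_j}(M),$$
while leaving the remaining rows of $M$ untouched. The plan is to combine this description with Claim \ref{claim:H:representation}.

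First I would unpack the definition of $M_{i_1,\dots,i_m}$. Reading off its prescribed $i$-th row (with entry $-t_{ij}$ in column $l_j$ and entry $1$ in column $i$), left-multiplication yields
$$\row_i(M_{i_1,\dots,i_m}\,M)=\row_i(M)-\sum_{j=1}^{r} t_{ij}\,\row_{l_j}(M) \qquad \text{for every } i\in\{i_1,\dots,i_m\}.$$
Next I would apply the projection onto the coordinates indexed by $\bar{I}_{i_1,\dots,i_m}$. Since this projection is linear on rows, it commutes with the linear combination above and gives
$$\row_i^{[\bar{I}_{i_1,\dots,i_m}]}(M_{i_1,\dots,i_m}\,M)=\row_i^{[\bar{I}_{i_1,\dots,i_m}]}(M)-\sum_{j=1}^{r} t_{ij}\,\row_{l_j}^{[\bar{I}_{i_1,\dots,i_m}]}(M).$$

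Finally, Claim \ref{claim:H:representation} says exactly that the right-hand side vanishes: by \eqref{eqn:representation},
$$\row_i^{[\bar{I}_{i_1,\dots,i_m}]}(M)=\sum_{j=1}^{r} t_{ij}\,\row_{l_j}^{[\bar{I}_{i_1,\dots,i_m}]}(M),$$
with the same coefficients $t_{ij}$ that were used in constructing $M_{i_1,\dots,i_m}$. Substituting produces the claimed zero vector.

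There is no real obstacle, since the structural work has already been done in Lemma \ref{lemma:mrows} and encapsulated in Claim \ref{claim:H:representation}; the only care needed is to match indexing conventions. In particular, one must ensure that the scalars $t_{ij}$ hardcoded into $M_{i_1,\dots,i_m}$ are literally the coefficients produced by the representation of $\row_i^{[\bar{I}_{i_1,\dots,i_m}]}$ in the base vectors spanning $H$, and that when $i\in\{l_1,\dots,l_r\}$ we pick the trivial representation $t_{ij'}=\delta_{jj'}$, in which case the corresponding row of $M_{i_1,\dots,i_m}M$ automatically becomes identically zero (not merely zero on $\bar{I}_{i_1,\dots,i_m}$), which is consistent with the claim.
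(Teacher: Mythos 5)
Your proposal is correct and matches the paper's intent exactly: the corollary is stated there as an immediate consequence of the definition of $M_{i_1,\dots,i_m}$ (row operation $\row_i\mapsto\row_i-\sum_j t_{ij}\row_{l_j}$) combined with Claim \ref{claim:H:representation}, which is precisely the computation you carry out. Your remark about the base-row indices $l_1,\dots,l_r$ is a reasonable way to handle a genuine ambiguity in the paper's formulation, and in the paper's subsequent application the vanishing is only ever used for non-base rows, so nothing is affected.
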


\section{Treatment for general matrices: proof of Theorem \ref{theorem:discrete:main:1}}\label{section:general}

\subsection{A proof without additive structure} We first show an easier variant of Theorem \ref{theorem:discrete:main:1} where the additive structure is omitted.

\begin{theorem}\label{theorem:discrete:nostructure:1}  Let $0<\ep<1$ and $C$ be positive constants. \ Let $M=(m_{ij})_{1\le i,j\le n}\in \M(n)$ be a matrix  with $s_0(M)\ge n^{-C}$. \ Then there exists a submatrix $M'$ of $M$ of size $n_1\times n_2$, with $n_1,n_2=n-O(n^{1-\ep})$, and a set of $r=O(1)$ vectors $\Bg_1,\dots,\Bg_r\in \R^{n_2}$ such that $M'$ can be written as $M''+F$, where $F\in \CF_{n_1n_2}$ and the rows of $M''$ are generated by $\Bg_1,\dots,\Bg_r$.
 \end{theorem}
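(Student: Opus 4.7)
The strategy is to extract a rank-$r$ row structure from $M$ via Lemma \ref{lemma:mrows}, extend it row-by-row using Lemma \ref{lemma:drows} to cover almost all rows, and absorb the row-wise deviations into the sparse matrix $F$. First I would apply Lemma \ref{lemma:mrows} with $d=\Theta(\log n)$ and $k=O(n^{1-2\ep})$ chosen so that $(k+1)n^\ep = O(n^{1-\ep})$; this produces a common exceptional column set $I_0\subset[n]$ of size $O(n^{1-\ep})$, a rank upper bound $r\le r_0=O_{C,\ep}(1)$, and a set of $r$ \emph{base rows} $\row_{l_1},\dots,\row_{l_r}$ (chosen from the $m=O(n^{1-2\ep}\log n)$ rows handled directly by the lemma) whose restrictions $\Bg_j := \row_{l_j}^{\bar{I}_0}\in\R^{n-|I_0|}$ are linearly independent and span the row-space of the restricted submatrix.

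Next I would extend this structure to every row of $M$. For each $i\in[n]\setminus\{l_1,\dots,l_r\}$, applying Lemma \ref{lemma:drows} to the $(r+1)$-tuple $\{l_1,\dots,l_r,i\}$ with $d=r+1$ gives a row-specific exceptional set $I_i\subset[n]$ with $|I_i|\le n^\ep$ and coefficients $t_{i1},\dots,t_{ir}$ such that $\row_i = \sum_j t_{ij}\row_{l_j}$ on $\bar{I}_i$. Evaluating at any $r$ columns of $\bar{I}_0\cap \bar{I}_i$ where the base rows are a basis pins down the $t_{ij}$ uniquely and makes them consistent across all $i$, so each row admits a decomposition $\row_i = \sum_j t_{ij}\row_{l_j} + e_i$ with the residual $e_i$ supported on $I_i$ of size at most $n^\ep$.

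The last phase shrinks the residuals down to at most one non-zero per row. I would enlarge $I_0$ to a global column set $I\supset I_0$ of size $O(n^{1-\ep})$ by absorbing every column that appears in at least $n^{2\ep}$ of the sets $I_i$ (a double-counting on $\sum_i |I_i|\le n\cdot n^\ep$ shows that at most $O(n^{1-\ep})$ absorbed columns suffice), and then discard any row whose residual still has more than one non-zero coordinate in $[n]\setminus I$. Setting $T$ to be the surviving row set and $S := [n]\setminus I$, the submatrix $M' := M[T,S]$ decomposes as $M''+F$ with $M'' := \bigl[\sum_j t_{ij}\Bg_j^S\bigr]_{i\in T}$ of rank at most $r$ (rows in the span of $\Bg_1^S,\dots,\Bg_r^S\in\R^{n_2}$) and $F\in\CF_{n_1 n_2}$. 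The hard part will be this final sparsification: Lemma \ref{lemma:drows} alone only gives $|I_i|\le n^\ep$, whereas membership in $\CF_{n_1 n_2}$ requires support at most one per surviving row, and the naive double-counting only reduces the per-row deviation count to roughly $n^{2\ep}$. To bridge the remaining gap I expect to exploit that each entry of $F\Bx$, being the difference of two $\pm 1$ values on $\pm 1$ inputs, must lie in $\{-2,0,2\}$, and to use the resulting rigidity (together with a refinement of the concentration argument on rows with two or more live residual coordinates) to bound the number of discarded "bad" rows by $O(n^{1-\ep})$.
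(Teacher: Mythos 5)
There is a genuine gap, and it sits exactly where you flag it. Your first stage (base rows from Lemma \ref{lemma:mrows}, per-row exceptional sets from Lemma \ref{lemma:drows}, residuals $e_i$ supported on at most $n^\ep$ columns) is in the spirit of the paper, but nothing in your plan forces the residuals to become zero-or-a-single-$\pm 1$ entry on all but $O(n^{1-\ep})$ rows. Lemma \ref{lemma:drows} gives no control on the \emph{values} of the residual entries (they need not lie in $\{-1,0,1\}$), and your column-absorption/double-counting step gives no bound at all on the number of rows whose residual keeps two or more live coordinates --- a priori that could be every row, since each row may deviate on up to $n^\ep$ columns that are individually rare. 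The proposed bridge, that each entry of $F\Bx$ lies in $\{-2,0,2\}$, is circular: $F$ is the object you are trying to construct, and before sparsification you only know $\row_i=\sum_j t_{ij}\row_{l_j}+e_i$ with arbitrary real $e_i$, so no event of the form ``$F\Bx\in\{-2,0,2\}^{n_1}$'' is available. The paper closes precisely this gap with a second, probabilistic stage that your plan omits: it groups the rows into blocks, shows (Lemma \ref{lemma:blocks:str1}) that the residual matrices $Y_i$ satisfy $Y_i\Bx\in L_i\CC_{r_0}+\CC_{n^{2\ep}}$ simultaneously with probability at least $n^{-C}$, runs the conditional-probability ``bad subsequence'' argument to show that after conditioning on at most $O((C/\ep)\log n)$ blocks every remaining block's event has conditional probability at least $1-\ep$, and then invokes Erd\H{o}s's Littlewood--Offord bound inside Lemma \ref{lemma:inverse:local} to conclude that, after deleting $O_{r_0}(1)$ further columns per block, every residual row is zero or has exactly one $\pm1$ entry. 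Without this (or an equivalent anti-concentration argument) the membership $F\in\CF_{n_1n_2}$ is simply not established.

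A secondary problem: applying Lemma \ref{lemma:drows} to the $(r+1)$-tuple $\{l_1,\dots,l_r,i\}$ does not yield $\row_i=\sum_j t_{ij}\row_{l_j}$ on $\bar{I}_i$. The lemma only bounds the dimension of the span of the $r+1$ restricted rows by the rank of the \emph{new} GAP, which is $O_{C,\ep}(1)$ but may exceed $r$; and even when that dimension is at most $r$, you would still need the base rows to stay linearly independent after deleting the columns in $I_i$, which their independence on $\bar{I}_0$ does not guarantee. The paper sidesteps this by working block-by-block with block-specific base rows (Lemma \ref{lemma:mrows} and Corollary \ref{cor:mrows}), and only at the very end unifies all $n_0 r_0$ base vectors by one more application of Lemma \ref{lemma:mrows}(ii), at the cost of a further excluded column set $J$ with $|J|=O(n^{1-\ep})$; that final step is where the claim that the rows of $M''$ are generated by $O(1)$ vectors actually comes from.
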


 As the subspace generated by the columns of $M''$ also has dimension at most $r$, we can restate Theorem \ref{theorem:discrete:nostructure:1} as follows.

\begin{theorem}\label{theorem:discrete:nostructure:2} There exist a submatrix $M'$ of $M$ of size $n_1\times n_2$, with $n_1,n_2=n-O(n^{1-\ep})$, and a set of $r=O(1)$ vectors $\Bg_1,\dots,\Bg_r\in \R^{n_1}$ such that  such that $M'$ can be written as $M''+F$, where $F\in \CF_{n_1n_2}$ and the columns of $M''$ are generated by $\Bg_1,\dots,\Bg_r$.
\end{theorem}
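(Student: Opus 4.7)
My plan is to use Lemma \ref{lemma:mrows} to identify a bounded number $r\leq r_0$ of ``base rows'' $\row_{l_1},\dots,\row_{l_r}$ of $M$ whose span captures essentially every remaining row, up to discarding a small submatrix and up to absorbing at most one $\{-1,0,1\}$ entry per row into $F$. The generators $\Bg_1,\dots,\Bg_r\in\R^{n_1}$ will then be taken as appropriate column vectors of the base-row submatrix (restricted to the retained rows), so that by construction every column of $M''$ is an $\R$-linear combination of them.

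The steps are: (i) apply Lemma \ref{lemma:drows} to an initial batch of $d=c\log n$ rows to produce $r\leq r_0=O_{C,\ep}(1)$ base rows $\row_{l_1},\dots,\row_{l_r}$ and an initial exceptional column set $I_0$ of size $O(n^\ep)$; (ii) for every other row $\row_i$, apply Lemma \ref{lemma:drows} to the $(r+1)$-tuple $(\row_{l_1},\dots,\row_{l_r},\row_i)$ to obtain an exceptional set $I_i$ of size $O(n^\ep)$, on whose complement $\row_i$ is an $\R$-linear combination $\sum_j t_{ij}\row_{l_j}$; (iii) declare a column ``popular'' if it lies in $I_i$ for at least $n^{2\ep}$ indices $i$, and add all popular columns to the excised set $J$ (by double counting $\sum_i|I_i|\leq n^{1+\ep}$, we have $|J|=O(n^{1-\ep})$); (iv) for each remaining row $\row_i$, examine the residual $\row_i-\sum_j t_{ij}\row_{l_j}$ restricted to $\bar J$, absorb the residual into a row of $F\in\CF_{n_1 n_2}$ whenever it has at most one nonzero entry in $\{-1,0,1\}$, and declare $\row_i$ \emph{bad} otherwise; (v) verify that the bad rows number $O(n^{1-\ep})$, and finally assemble $M'=M''+F$ on the retained $n_1\times n_2$ submatrix.

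The main obstacle is step (v): showing that only $O(n^{1-\ep})$ rows have residuals with more than one nonzero entry on $\bar J$. This does not follow from Lemma \ref{lemma:mrows} alone, since the guaranteed ``rank $\leq r_0$ on $\bar I$'' conclusion does not a priori control how strongly a given row deviates on the popular columns. I expect the argument to proceed by a further application of Theorem \ref{theorem:optimal} to the residual vectors: were too many rows to have large-support residuals on $\bar J$, one could construct a large concentration event inside $\{M\Bx\in\CC_n\}$ that contradicts $s_0(M)\geq n^{-C}$. Making this quantitative, and simultaneously keeping the column-excision set to size $O(n^{1-\ep})$, is where most of the technical effort will lie.
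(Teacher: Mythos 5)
There is a genuine gap, and it sits exactly where you flagged it, plus one earlier step. First, step (ii) does not follow from Lemma \ref{lemma:drows}: applying that lemma to the $(r+1)$-tuple $(\row_{l_1},\dots,\row_{l_r},\row_i)$ only bounds the dimension of the span of these restricted rows by $\rank(Q)\le r_0$, which is vacuous when $r+1\le r_0$, and even when it forces a dependence it does not force $\row_i^{[\bar I_i]}$ into the span of the \emph{fixed} base rows (their restriction to $\bar I_i$ may drop rank). The paper avoids fixing base rows once and for all: it processes the rows in blocks of size $r_0+n^{2\ep}$, re-selecting base vectors inside each block via the iterated Lemma \ref{lemma:mrows}/Corollary \ref{cor:mrows}, and only at the very end merges the $n_0 r_0$ per-block base rows by one more application of Lemma \ref{lemma:mrows}(ii), at the cost of excising a further column set $J$ of size $O(n^{1-\ep})$.

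Second, step (v) is the heart of the proof, and the route you sketch (one more application of Theorem \ref{theorem:optimal} to the residuals to build a contradicting concentration event) is not how the paper argues and does not obviously close: many rows each failing a probability-$\le 1/2$ event need not contradict $s_0(M)\ge n^{-C}$, because those events share coordinates and do not multiply. The paper's mechanism is a conditional factorization exploiting the sparsity of the residual blocks: the event $M\Bx\in\CC_n$ forces $Y_i\Bx\in L_i\CC_{r_0}+\CC_{n^{2\ep}}$ for every block (Lemma \ref{lemma:blocks:str1}); writing $\P(\wedge_i\,\cdot)\ge n^{-C}$ as a product of conditional probabilities over the \emph{fresh} coordinates of each block's support $I_i$, a maximal ``bad'' subsequence (conditional sup-probability $\le 1-\ep$) has length at most $(2C/\ep)\log n$, so all other blocks are conditionally near-certain; for those, Lemma \ref{lemma:inverse:local} --- whose engine is Erd\H{o}s' \emph{forward} Littlewood--Offord bound, not the inverse theorem --- shows that, after deleting only $O_{r_0}(1)$ further columns per block, every residual row is exactly zero or a single $\pm1$. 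The excised columns are then the supports of the $O(\log n)$ bad blocks (size $O(n^{3\ep}\log n)$) plus $n_0\cdot O(1)=O(n^{1-2\ep})$, together with $J$ above; in particular no retained row is ``bad'' in your sense, whereas your popular-column device from step (iii) neither produces independence nor bounds the number of rows with residual support of size at least two. Your step for passing from row-generated to column-generated structure (the actual content of Theorem \ref{theorem:discrete:nostructure:2} given Theorem \ref{theorem:discrete:nostructure:1}) is fine and matches the paper.
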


We now prove Theorem \ref{theorem:discrete:nostructure:1}. \ As the property $\P_{\Bx\in \CC_n} (M\Bx\in \CC_n)\ge n^{-C}$ does not change by swapping the rows and columns of $M$, we will apply these swaps whenever necessary to simplify the presentation.

We will apply Lemma \ref{lemma:mrows} and Corollary \ref{cor:mrows} to blocks of $m=r_0+n^{2\ep}$ consecutive rows of $M$. \ In each block $B_i$, we will keep track of the base vectors (by adding at most $r_0-1$ vectors if needed, we always assume that there are exactly  $r_0$ base vectors) and $n^{2\ep}$ others that belong to the subspace generated by these vectors. \ By swapping the rows if necessary, we obtain the following.

\begin{claim}
The row set of $M$ (with an exception of at most $r_0+n^{2\ep}-1$ last rows) can be decomposed into $n_0$ consecutive blocks of size $r_0+n^{2\ep}$ each, here $n_0\ge n/(r_0+n^{2\ep})-1$, such that in each block, the first $r_0$ rows serve as the base vectors and the next $n^{2\ep}$ rows belong to the subspace generated by these $r_0$ base vectors.
\end{claim}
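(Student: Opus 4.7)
The plan is to invoke Lemma \ref{lemma:mrows} once on each of roughly $n/m$ consecutive blocks of rows, where $m:=r_0+n^{2\ep}$, and then to reorder the rows inside each block so that a spanning subset is placed first. Row swaps in $M$ correspond to coordinate swaps on $M\Bx$, and $\CC_n$ is invariant under coordinate permutations, so every such rearrangement preserves $s_0(M)$; in particular the hypothesis $s_0(M)\ge n^{-C}$ is untouched. This is already the rationale advertised at the start of Section \ref{section:general}.

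First I would partition $\{1,\dots,n\}$ into $n_0=\lfloor n/m\rfloor\ge n/m-1$ consecutive blocks $B_1,\dots,B_{n_0}$ of size exactly $m$, leaving at most $m-1$ unused rows at the end. I may also assume $\ep<1/3$, since a smaller $\ep$ only strengthens the conclusion and this keeps the parameter $k$ chosen below within the admissible range of Lemma \ref{lemma:mrows}.

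Next, for each block $B_i=\{i_1<\dots<i_m\}$, I would apply Lemma \ref{lemma:mrows} with $d:=r_0+1$ and $k:=n^{2\ep}-1$, so that the required identity $m=k(d-r_0)+d=k+r_0+1$ holds and both constraints $r_0\le d\le c\log n$ and $1\le k\le n^{1-\ep}$ are satisfied. Part (ii) of that lemma then produces an exceptional set $I_{i_1,\dots,i_m}$ of size at most $(k+1)n^\ep=O(n^{3\ep})$ and tells me that the restricted row vectors $\row_{i_1}^{[\bar I]},\dots,\row_{i_m}^{[\bar I]}$ span a subspace $H$ of $\R^{|\bar I|}$ of dimension at most $r_0$.

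Finally, inside each block I would pick $r_0$ rows whose restrictions to $\bar I_{i_1,\dots,i_m}$ form a spanning set of $H$ (padding with arbitrary rows from the block should $\dim H<r_0$), and swap them into the first $r_0$ positions of the block. After doing this in every block, the first $r_0$ rows of each block play the role of base vectors in the sense of Claim \ref{claim:H:representation}, and the remaining $n^{2\ep}$ rows of the block lie in their span once restricted to $\bar I$, which is exactly the conclusion of the claim. There is no real obstacle: the argument is essentially bookkeeping once Lemma \ref{lemma:mrows} is in hand, and the only thing to verify carefully is that the chosen pair $(d,k)$ meets the hypotheses of that lemma, which is a direct arithmetic check.
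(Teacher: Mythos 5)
Your proposal is correct and follows essentially the same route as the paper: the paper likewise cuts the rows into consecutive blocks of size $m=r_0+n^{2\ep}$, applies Lemma \ref{lemma:mrows} (ii) to each block to get at most $r_0$ spanning rows (restricted to the complement of the exceptional index set), pads with arbitrary rows of the block so that there are exactly $r_0$ base vectors, and swaps these to the front, using that row permutations leave $s_0(M)$ unchanged. Your explicit parameter choice $d=r_0+1$, $k=n^{2\ep}-1$ and the resulting restriction $\ep<1/3$ (needed so that $k\le n^{1-\ep}$) make precise something the paper leaves implicit; only note that replacing $\ep$ by a smaller value changes the block size and weakens the downstream exceptional-set bounds, so this is really a shared implicit assumption of the paper's argument rather than a genuine ``without loss of generality''.
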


As such, for each block matrix $B_i$ (generated by the rows in the $i$-th block), the matrix $M_i$ obtained in Corollary \ref{cor:mrows} is a lower triangular matrix with 1's on its main diagonal of the form
\[ M_i=
\left(\begin{array}{ccc}
I_{(i-1)(r_0+n^{2\ep})} & \mathbf{0} & \mathbf{0} \\
\mathbf{0} & T_i & \mathbf{0} \\
\mathbf{0} & \mathbf{0} & I_{n-i(r_0+n^{2\ep})}
\end{array}
\right),\]

where $T_i$ is the corresponding lower triangular square matrix of order $r_0+n^{2\ep}$.

Observe also that for any $\By=(y_1,\dots,y_n)$, $M_i \By$ just changes the components $y_{i_0}$ with  $i_0\in \{(i-1)(r_0+n^{2\ep})+r_0+1, \dots, i(r_0+n^{2\ep}-1)\}$,

\begin{equation}\label{eqn:ii_0}
(M_i\By)_{i_0}= y_{i_0} - \sum_{j=1}^{r_0} t_{i_0j} y_{(i-1)n^{2\ep} +j}.
\end{equation}

Set

\begin{equation}\label{eqn:M_0M_0'}
M_0:=M_{n_0} \cdots  M_1 \mbox{ and } M_0':=M_0  M.
\end{equation}

Let $\Bx\in \R^n$ and $\By=M\Bx$. By definition, the components of $\By\in \R^n$ can be decomposed into blocks of $r+n^{2\ep}$ consecutive components $\By_1=(y_1,\dots,y_{n^{2\ep}}), \By_2=(y_{n^{2\ep}+1},\dots, y_{2n^{2\ep}}),\dots$ (except at most $n^{2\ep} +r_0-1$ last components) such that

$$M_0 \By =(T_1\By_1,T_2 \By_2, \dots).$$

Also, by definition,

\[ M_0'=
\left(\begin{array}{c}
T_1 B_1 \\
T_2 B_2 \\
\cdots
\end{array}
\right),\]

where we recall that $B_i$ is the $(r_0+n^{2\ep})\times n$ matrix generated by $\row_j(M), (i-1)(r_0+n^{2\ep})+1 \le j \le i(r_0+n^{2\ep})$. \ Furthermore, by Corollary \ref{cor:mrows}

\[ T_i B_i=
\left(\begin{array}{cc}
X_i \\
Y_i
\end{array}
\right),\]

where $X_i$ is an $r_0\times n$ matrix and $Y_i$ is an $n^{2\ep} \times n$ matrix satisfying the following property: there exists an exceptional index set $I_i\subset [n]$ of size at most $(k+1)n^\ep < n^{3\ep}$ such that for any $j\in  \bar{I}_i$,

\begin{equation}\label{eqn:Y:sparse}
\col_j(Y_i) = \mathbf{0}.
\end{equation}

Now we analyze the event $\By=M\Bx\in \CC_n$. \ Rewrite as

\begin{equation}\label{eqn:M'M_0}
M_0'\Bx=M_0\By.
\end{equation}

Projecting \eqref{eqn:M'M_0} onto the components of indices from $\{(i-1)(r_0+n^{2\ep})+r_0+1, \dots, i(r_0+n^{2\ep})\}$, we obtain via \eqref{eqn:ii_0}

\begin{equation}\label{eqn:M'M_0:i}
Y_i \Bx =  \Big(y_{i_0} - \sum_{j=1}^{r_0} t_{i_0j} y_{(i-1)(r_0+n^{2\ep}) +j} \Big)_{i_0\in \{(i-1)(r_0+n^{2\ep})+r_0+1, \dots, i(r_0+n^{2\ep})\}}.
\end{equation}

Furthermore, when $\By \in \CC_n$,

$$(y_{i_0})_{i_0\in \{(i-1)(r_0+n^{2\ep})+r_0+1, \dots, i(r_0+n^{2\ep})\}}\in \CC_{n^{2\ep}} \mbox{ and } (y_{(i-1)(r_0+n^{2\ep}) +1}, \dots,  y_{(i-1)(r_0+n^{2\ep}) +r_0}) \in \CC_{r_0}.$$

Define $L_i$ to be the $n^{2\ep} \times r_0$ deterministic matrix

$$L_i\Bz:=(- \sum_{j=1}^{r_0} t_{i_0j} z_j)_{i_0\in \{(i-1)(r_0+n^{2\ep})+r_0+1, \dots, i(r_0+n^{2\ep})\}}.$$

We obtain from \eqref{eqn:M'M_0:i}  the following useful bound.

\begin{lemma}[Block structure I]\label{lemma:blocks:str1}
Assume that $\P_{\Bx\in \CC_n} (M\Bx\in \CC_n) \ge n^{-C}$. Then,
$$\P_{\Bx\in \CC_n} ( \wedge_{1\le i\le n_0}Y_i \Bx \in L_i\CC_{r_0}+\CC_{n^{2\ep}}) \ge n^{-C}.$$

\end{lemma}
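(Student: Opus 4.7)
The plan is to observe that the asserted inclusion is in fact a \emph{deterministic} consequence of the event $\{M\Bx\in \CC_n\}$, so the probabilistic bound follows immediately by monotonicity. All the real content has already been packed into the construction of $M_0, M_0', Y_i$ and $L_i$; what remains is to unwind the definitions.

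First I would note that each $M_i$ is lower triangular with $1$'s on its diagonal, hence so is $M_0 = M_{n_0}\cdots M_1$, and in particular $M_0$ is invertible. Therefore $\By = M\Bx$ is equivalent to $M_0\By = M_0'\Bx$, which is exactly \eqref{eqn:M'M_0}. Projecting both sides onto the coordinate set $\{(i-1)(r_0+n^{2\ep})+r_0+1,\dots, i(r_0+n^{2\ep})\}$ and using the description of $M_0'$ as the vertical concatenation of the blocks $T_iB_i = \binom{X_i}{Y_i}$, together with the action of $M_0$ on $\By$ given by \eqref{eqn:ii_0}, yields identity \eqref{eqn:M'M_0:i} pointwise in $\Bx$ (and unconditionally on $\By$).

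Next I would impose the assumption $\By = M\Bx \in \CC_n$. For each $i$, the coordinates $y_{i_0}$ with $i_0 \in \{(i-1)(r_0+n^{2\ep})+r_0+1,\dots, i(r_0+n^{2\ep})\}$ form a vector in $\CC_{n^{2\ep}}$, while the coordinates $(y_{(i-1)(r_0+n^{2\ep})+j})_{j=1}^{r_0}$ form a vector $\Bz\in \CC_{r_0}$. By the very definition of $L_i$, the right-hand side of \eqref{eqn:M'M_0:i} is then of the form $\Bv + L_i\Bz$ with $\Bv \in \CC_{n^{2\ep}}$ and $\Bz \in \CC_{r_0}$, so $Y_i\Bx \in L_i\CC_{r_0} + \CC_{n^{2\ep}}$.

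Since this implication holds deterministically for every $i\in \{1,\dots,n_0\}$, the event $\{M\Bx\in \CC_n\}$ is contained in $\bigwedge_{1\le i\le n_0}\{Y_i\Bx \in L_i\CC_{r_0}+\CC_{n^{2\ep}}\}$, and monotonicity of probability gives the lower bound $n^{-C}$ claimed. There is no serious obstacle beyond keeping careful track of the block indexing; the statement is essentially a reformulation of the construction, packaged so that in the next step one can apply inverse Littlewood--Offord to the columns of $Y_i$ while treating $L_i\CC_{r_0}+\CC_{n^{2\ep}}$ as a small exceptional target set.
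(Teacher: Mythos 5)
Your proof is correct and follows the same route as the paper: the event $\{M\Bx\in\CC_n\}$ is shown deterministically to imply each inclusion $Y_i\Bx\in L_i\CC_{r_0}+\CC_{n^{2\ep}}$ by rewriting $M\Bx=\By$ as \eqref{eqn:M'M_0}, projecting block by block via \eqref{eqn:ii_0} to get \eqref{eqn:M'M_0:i}, and then using the definition of $L_i$, so the bound follows by monotonicity. Nothing is missing; this matches the paper's derivation of Lemma \ref{lemma:blocks:str1}.
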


{\bf Independence decomposition}. As a consequence of Lemma \ref{lemma:blocks:str1}, for any $j_0\le n_0$ and any $j_0$ indices $i_1<\dots<i_{j_0}$, one also has

\begin{equation}\label{eqn:independence:1}
\P_{\Bx\in \CC_n} ( \wedge_{1\le j\le j_0}Y_{i_j} \Bx \in L_{i_j}\CC_{r_0}+\CC_{n^{2\ep}}) \ge n^{-C}.
\end{equation}

Recall from \eqref{eqn:Y:sparse} that the number of non-zero columns of each $Y_{i_j}$ is $|I_{i_j}|\le n^{3\ep}$, and thus they are extremely sparse. \ Furthermore, if the index sets $I_{i_1},\dots I_{i_{j_0}}$ were disjoint for some $j_0\gg \log n$, then the events $Y_{i_j} \Bx \in L_{i_j}\CC_{r_0}+\CC_{n^{2\ep}}$ would be independent, and so
\eqref{eqn:independence:1} would imply that most of the events $Y_{i_j} \Bx \in L_{i_j}\CC_{r_0}+\CC_{n^{2\ep}}$ hold with probability very close to one. \ We will make this observation rigorous in the next step.

Let $Y_{i_j | i_1,\dots, i_{j-1}}$  be the (possibly empty) submatrix of $Y_{i_j}$ of columns indexing from $I_{i_j}\backslash (I_{i_1}\cup \dots I_{i_{j-1}})$, we can rewrite \eqref{eqn:independence:1} in terms of conditional probability

\begin{align}\label{eqn:prob:conditional}
n^{-C} &\le \P_{\Bx\in \CC_n} ( \wedge_{1\le j\le j_0}Y_{i_j} \Bx \in L_{i_j}\CC_{r_0}+\CC_{n^{2\ep}})  \nonumber \\
&= \P_{\Bx_{I_{i_1}}} ( Y_{i_1} \Bx_{I_{i_1}} \in L_{i_1}\CC_{r_0}+\CC_{n^{2\ep}})  \times \P_{\Bx_{I_{i_2}}} ( Y_{i_2} \Bx_{I_{i_2}} \in L_{i_2}\CC_{r_0}+\CC_{n^{2\ep}}| A(\Bx_{I_{i_1}})) \times  \dots  \nonumber \\
&\times  \P_{\Bx_{I_{i_{j_0}}}} ( Y_{i_{j_0}} \Bx_{I_{i_{j_0}}} \in L_{i_{j_0}}\CC_{r_0}+\CC_{n^{2\ep}}| A(\Bx_{I_{i_1}}) \wedge \dots \wedge A(\Bx_{I_{i_{j_0-1}}})) \nonumber \\
&\le \P_{\Bx_{I_{i_1}}\in \CC_{|I_{i_1}|}} ( Y_{i_1} \Bx_{I_{i_1}} \in L_{i_1}\CC_{r_0}+\CC_{n^{2\ep}}) \times  \sup_{\Ba_2}\P_{\Bx_{I_{i_2} \backslash I_{i_1}}\in \CC_{|I_2/I_1|}} ( Y_{i_2|i_1} \Bx_{I_{i_2}\backslash I_{i_1}} \in \Ba_2+   L_{i_2}\CC_{r_0}+\CC_{n^{2\ep}})   \times  \dots  \nonumber \\
&\times \sup_{\Ba_{j_0}}\P_{\Bx_{I_{j_0}\backslash I_{i_1}\cup \dots \cup I_{i_{j_0-1}}} \in \CC_{|I_{i_{j_0}}\backslash I_{i_1}\cup \dots \cup I_{i_{j_0-1}}|}} ( Y_{i_{j_0}|i_1,\dots, i_{j_0-1}} \Bx_{I_{i_{j_0}}\backslash I_{i_1}\cup \dots \cup I_{i_{j_0-1}} } \in  \Ba_{j_0}+ L_{i_{j_0}}\CC_{r_0}+\CC_{n^{2\ep}}),
\end{align}

where $A(\Bx_{I_{i_j}})$ are the events $Y_{i_{j}} \Bx_{I_{i_{j}}} \in L_{i_{j}}\CC_{r_0}+\CC_{n^{2\ep}}$.

\begin{definition}
Motivated by \eqref{eqn:prob:conditional}, we say that a subsequence $\{Y_{i_1},\dots, Y_{i_l}\}$ is {\it bad} if for any $1\le k\le l$,

$$\sup_{\Ba}\P_{\Bx_{I_{i_k}\backslash \cup_{j=1}^{k-1}I_{i_j}}  \in \CC_{|I_{i_k}\backslash \cup_{j=1}^{k-1}I_{i_j}|} } ( Y_{i_k|i_1,\dots, i_{k-1}} {\Bx_{I_{i_k}\backslash \cup_{j=1}^{k-1}I_{i_j}}} \in  \Ba+ L_{i_k}\CC_{r_0}+\CC_{n^{2\ep}}) \le 1-\ep.$$

Note that this definition trivially implies

$$|I_{i_k}\backslash (I_{i_1}\cup I_{i_2}\cup \dots \cup I_{i_{k-1}})| \ge 1.$$

\end{definition}

\begin{claim}
If the subsequence $\{Y_{i_1},\dots, Y_{i_l}\}$ is bad, then

$$l \le (2C/\ep) \log n.$$
\end{claim}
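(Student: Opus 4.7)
The claim is essentially a direct combinatorial consequence of pairing the chain-of-conditional-probabilities bound \eqref{eqn:prob:conditional} with the defining inequality of badness; my strategy is to make this pairing explicit and then extract a logarithmic bound on $l$ by taking logarithms.

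First, I would set $j_0 := l$ in \eqref{eqn:independence:1}, so that the bound
\[
n^{-C} \le \P_{\Bx\in \CC_n}\bigl(\wedge_{k=1}^{l} Y_{i_k} \Bx \in L_{i_k}\CC_{r_0}+\CC_{n^{2\ep}}\bigr)
\]
applies, and then invoke the conditional-probability expansion in \eqref{eqn:prob:conditional}, which bounds the right-hand side by the product
\[
\prod_{k=1}^{l} \sup_{\Ba_k}\P_{\Bx_{I_{i_k}\backslash \cup_{j<k} I_{i_j}}}\bigl(Y_{i_k|i_1,\dots,i_{k-1}}\Bx \in \Ba_k + L_{i_k}\CC_{r_0}+\CC_{n^{2\ep}}\bigr).
\]
By the definition of a bad subsequence, each factor in this product is at most $1-\ep$. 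Combining, we obtain
\[
n^{-C} \le (1-\ep)^{l}.
\]

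Taking logarithms of both sides yields $l\cdot\bigl(-\log(1-\ep)\bigr) \le C\log n$, and using the elementary inequality $-\log(1-\ep) \ge \ep/2$ (valid for all $\ep\in(0,1)$) gives the desired bound $l \le (2C/\ep)\log n$.

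There is no genuine obstacle here; the only subtlety worth double-checking is that the indices and conditioning variables match between the bad-subsequence definition and the factors appearing in \eqref{eqn:prob:conditional}. Specifically, one should note that each conditional factor is controlled by restricting to the ``fresh'' coordinates $I_{i_k}\backslash\cup_{j<k} I_{i_j}$ and taking a supremum over the coordinates already conditioned on, which is precisely the expression appearing in the definition of bad, so the substitution is lossless.
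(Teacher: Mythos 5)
Your proposal is correct and follows essentially the same route as the paper: the paper's proof likewise applies the conditional-probability decomposition \eqref{eqn:prob:conditional} to the bad subsequence, bounds each factor by $1-\ep$ using the definition of badness, and concludes from $n^{-C}\le(1-\ep)^l$. Your write-up merely makes explicit the final logarithmic step (via $-\log(1-\ep)\ge\ep$, which absorbs the factor $2$) that the paper leaves implicit.
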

\begin{proof} This follows directly from \eqref{eqn:prob:conditional},

$$n^{-C}\le (1-\ep)^l.$$
\end{proof}

In our next step, choose a longest possible bad sequence. \ Without loss of generality, we assume that this consists of $Y_1,\dots, Y_l$. \ Next, for $l+1\le i\le n_0$, call $i$ {\it suitable} if

$$|J_i|:=|I_i\backslash I_1\cup \dots \cup I_l| \ge 1.$$

Let $i\in \CI_g$ be a suitable index, we will be focusing on the structure of the matrix $Y_{i|i_1,\dots,i_l}$.

\begin{lemma}[Block structure II]\label{lemma:inverse:local}
Let $A$ be a matrix of size $n^{2\ep} \times k$, where $k\ge 1$ and

$$\sup_{\Ba} \P_{\Bx\in \CC_k}(A \Bx \in \Ba +  L\CC_{r_0}+\CC_{n^{2\ep}}) \ge 1-\ep,$$

for some deterministic $n^{2\ep} \times r_0$ matrix $L$. \ Then there exists an index set $I_A \subset [k]$ of size $O_{r_0}(1)$ such that the submatrix of $A$ generated by columns indexing from $\bar{I}(A)$ has the following property: every row vector is either zero or contains exactly one $\pm 1$ entry.
\end{lemma}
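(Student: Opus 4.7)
The plan is first to absorb the $L\CC_{r_0}$ term by pigeonhole. Fix $\Ba$ realizing the supremum; since the witness $\By\in\CC_{r_0}$ for each good $\Bx$ ranges over only $|\CC_{r_0}|=2^{r_0}=O_{r_0}(1)$ values, pigeonhole yields a fixed $\By^*\in\CC_{r_0}$ such that, setting $\Ba':=\Ba+L\By^*$ and $G:=\{A\Bx-\Ba'\in\CC_{n^{2\ep}}\}$,
\[
\P_{\Bx\in\CC_k}(G)\;\ge\;p\;:=\;(1-\ep)/2^{r_0}\;=\;\Omega_{r_0}(1).
\]
Projecting onto each row gives $\P(\Br_j\cdot\Bx\in\{\Ba'_j-1,\Ba'_j+1\})\ge p$; pigeonholing the two target values then gives $\sup_v\P(\Br_j\cdot\Bx=v)\ge p/2$, and the classical Erd\H os--Littlewood--Offord inequality bounds the number of nonzero entries of each $\Br_j$ by $O(p^{-2})=O_{r_0}(1)$.

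\textbf{Bounding the ``bad'' columns $B$.} Let $B:=\{i\in[k]:(\Br_j)_i\notin\{0,\pm 1\}\text{ for some }j\}$ and, for each $i\in B$, fix a witness row $j(i)$ with $(\Br_{j(i)})_i\notin\{0,\pm 1\}$. The single-flip observation is that for any $\Bx\in G$, flipping only the coordinate $x_i$ shifts $\Br_{j(i)}\cdot\Bx$ by $\pm 2(\Br_{j(i)})_i\notin\{0,\pm 2\}$, knocking row $j(i)$ out of its gap-$2$ target set and hence $\Bx$ out of $G$. I will then greedily extract an isolating system $i_1,\dots,i_l\in B$ with witnesses $j_1,\dots,j_l$ satisfying $(\Br_{j_k})_{i_m}=0$ for $m\ne k$: at each step pick a fresh column and remove $\supp(\Br_{j_k})\setminus\{i_k\}$ (of size $O_{r_0}(1)$) from the pool. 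For such an isolating system, any nonempty joint flip of a subset $S\subseteq\{i_1,\dots,i_l\}$ kicks $\Bx$ out of $G$ (apply the single-flip observation to $j_k$ for any $i_k\in S$; by isolation the remaining flips leave row $j_k$ unchanged), so each $2^l$-orbit meets $G$ in at most one point. Hence $p\le 2^{-l}$ forces $l=O_{r_0}(1)$, and the greedy bookkeeping gives $|B|=O_{r_0}(1)$.

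\textbf{Multiplicity reduction.} After deleting $B$, the restricted matrix $A':=A|_{\bar B}$ has $\{0,\pm 1\}$-entries with rows of support size $O_{r_0}(1)$. Conditioning on $\Bx|_B$ (only $2^{|B|}=O_{r_0}(1)$ outcomes) and pigeonholing preserves a concentration of the form $\P(A'\Bx_{\bar B}-\Ba''\in\CC_{n^{2\ep}})\ge\Omega_{r_0}(1)$ for some $\Ba''$. An elementary computation shows that any $\{0,\pm 1\}$-row of support size $\ge 2$ has single-row probability at most $3/4$ of lying in any $2$-element set (e.g.\ $x_1+x_2\in\{0,2\}$ with probability $3/4$), so by independence on disjoint supports the number of such multi-entry rows with pairwise disjoint supports is bounded by $\log(1/p)/\log(4/3)=O_{r_0}(1)$. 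Iterating this extraction on successive layers---each time restricting to the submatrix on columns not yet covered, with concentration persisting by conditioning on the covered columns---then bounds the total union $B'$ of multi-entry row supports in $A'$ by $O_{r_0}(1)$. Setting $I_A:=B\cup B'$, of size $O_{r_0}(1)$, yields the conclusion: on $\bar I_A$ every row of $A$ is either zero or has exactly one $\pm 1$ entry.

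\textbf{The hard part.} The delicate step is the multiplicity reduction: $\pm 1$ entries do not force a flip out of $G$ under a single coordinate flip (a shift of $\pm 2$ can still land in the gap-$2$ target), so the clean single-flip antichain argument of the previous step is unavailable, and the $(3/4)^N$ product bound only controls \emph{disjoint-support} multi-entry rows. Verifying that the iterated extraction on submatrices terminates after $O_{r_0}(1)$ rounds with $O_{r_0}(1)$ total covered columns---and that the conditioned concentration hypothesis survives each round---is where the main combinatorial work lies.
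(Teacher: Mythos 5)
Your first two steps are sound and broadly parallel the paper's opening moves: the pigeonhole over the $2^{r_0}$ values of $L\CC_{r_0}$ and the Erd\H{o}s forward bound giving row supports of size $O_{r_0}(1)$ are exactly how the paper begins, and your flip/orbit argument for columns carrying an entry outside $\{0,\pm 1\}$ is a valid alternative device (with one small repair: the greedy you describe only guarantees the one-sided condition $(\Br_{j_k})_{i_m}=0$ for $m>k$, since a later witness row may perfectly well be supported on an earlier chosen column; this is harmless because in the orbit argument you should apply the single-flip observation to $i_k$ with $k=\min S$, whose witness row is unaffected by the remaining, later flips).

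The genuine gap is the multiplicity-reduction step, which you yourself flag as ``the hard part'' and leave unverified: the termination of the layered extraction after $O_{r_0}(1)$ rounds with $O_{r_0}(1)$ covered columns is exactly where the content of the lemma sits, and it is your insistence on pairwise disjoint supports (to invoke independence) that forces the multi-round structure in the first place. The paper never asks for independence: it builds a single maximal greedy chain of rows, each of which contributes, relative to the union of the supports of the previously chosen rows, either at least two fresh nonzero columns or one fresh nonzero column with entry $\ne\pm 1$ (the ``ill'' sequences), and bounds the probability by a telescoping product of conditional probabilities as in \eqref{eqn:prob:conditional}; after conditioning on the exposed coordinates each factor is at most an absolute constant $<1$, so the chain has length $O_{r_0}(1)$, the exceptional set $I(A)$ is the union of the chain's supports, and maximality immediately yields that every remaining row has at most one fresh entry, necessarily $\pm1$. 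Note that this single chain also subsumes your step 2, so the separate flipping argument becomes unnecessary. Your layered scheme can in fact be completed (by maximality, every row that still has two uncovered entries loses at least one entry to the cover in each round, so the number of rounds is at most the row support size $O_{r_0}(1)$, and the per-round conditioning loss $2^{|T|}$ stays $O_{r_0}(1)$), but as written the proposal stops short of this, and the sequential-conditioning route renders the extra bookkeeping moot. One point in your favor: your constant $3/4$ for a $\{0,\pm1\}$ row of support at least two hitting a two-point target is the correct one (the paper's stated bound of $1/2$ in Claim \ref{claim:s:r_0} fails when the two fresh entries are both $\pm1$, though this does not affect the $O_{r_0}(1)$ conclusion).
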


In our analysis $A$ will play the role of the matrices $Y_{i|i_1,\dots,i_l}$.

\begin{proof}(of Lemma \ref{lemma:inverse:local}) The assumption $\sup_{\Ba} \P_{\Bx\in \CC_k}(A \Bx \in \Ba +  L\CC_{r_0}+\CC_{n^{2\ep}}) \ge 1-\ep$ implies

\begin{equation}\label{eqn:local:large}
\sup_{\Ba} \P_{\Bx\in \CC_k}(A \Bx \in \Ba +\CC_{n^{2\ep}}) \ge (1-\ep)/2^{r_0}.
\end{equation}

We apply Theorem \ref{theorem:optimal} (more precisely Erd\H{o}s' bound for the forward Littlewood-Offord problem, \cite{E}) for any row $\row_i=(a_{i1},\dots,a_{ik})$ of $A$. \ It is implied that all but $O_{r_0}(1)$ components $a_{ij}$ are zero. \ For each $1\le i\le n^{2\ep}$, let $E_i\subset [n]$ denote the index set of the non-zero elements $a_{ij}, 1\le j\le k$. \ Similarly to what we have done so far, call a sequence $E_{i_1},\dots, E_{i_s}$ {\it ill} if one of the following holds:

\begin{itemize}
\item either $|E_{i_j}\backslash E_{i_1}\cup \dots \cup E_{i_{j-1}}|\ge 2$,
\vskip .1in
\item or $|E_{i_j}\backslash E_{i_1}\cup \dots \cup E_{i_{j-1}}|=1$, and the corresponding unique non-zero element $a_{i_j*}$ is different from $\pm1$.
\end{itemize}

If all of the $E_i$ have size at most 1 and the corresponding non-zero elements (if any) are either $1$ or $-1$, then we are done. \ Otherwise, choose a longest possible ill subsequence, and without loss of generality, assume that this sequence consists of $E_1,\dots, E_s$.

\begin{claim}\label{claim:s:r_0}
One has

$$s\le r_0.$$
\end{claim}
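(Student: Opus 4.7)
The plan is to derive a contradiction from the concentration hypothesis by combining a union-bound reduction with a row-by-row peeling argument that exploits the staircase structure of the ill sequence. I would begin by restricting the event in the hypothesis of Lemma~\ref{lemma:inverse:local} to the $s$ rows of the ill sequence $E_1,\dots,E_s$. Writing $A_s$ and $L_s$ for the corresponding row-submatrices of $A$ and $L$, the projection yields
\[
\sup_{\Ba_s} \P_{\Bx}\bigl(A_s\Bx \in \Ba_s + L_s\CC_{r_0} + \CC_s\bigr) \ge 1-\ep.
\]
A union bound over the $2^{r_0}$ choices of $\By^* \in \CC_{r_0}$ then produces a fixed $\By^*$ and a vector $\Bc := \Ba_s + L_s\By^*$ such that
$\P_{\Bx}(A_s\Bx \in \Bc + \CC_s) \ge (1-\ep)/2^{r_0}.$
This event is equivalent to $(A_s\Bx)_j \in \{c_j - 1, c_j + 1\}$ for every $j=1,\dots,s$, i.e.\ each row-coordinate of $A_s\Bx$ lies in a prescribed two-point set.

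Next, I would set $F_j := E_j \setminus (E_1 \cup \dots \cup E_{j-1})$ (the ``new'' columns introduced at step $j$) and observe that $E_j \subseteq F_1 \cup \dots \cup F_j$ while the $F_j$ are pairwise disjoint. Consequently $(A_s\Bx)_j$ depends only on $\Bx_{F_1 \cup \dots \cup F_j}$, and conditioning on $\Bx_{F_1 \cup \dots \cup F_{j-1}}$ gives
\[
(A_s\Bx)_j \;=\; \mathrm{const}(\Bx_{F_1 \cup \dots \cup F_{j-1}}) \;+\; \sum_{l \in F_j} a_{jl} x_l,
\]
where the sum over $F_j$ uses fresh, independent $\pm 1$ randomness. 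This lower-triangular decomposition is the key feature: peeling from $j=s$ downward, each factor depends only on the previously conditioned-on randomness plus a fresh random sum.

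The ill property now enters to bound the conditional probability that this fresh sum lands in the shifted two-point set $\{c_j - 1 - \mathrm{const},\,c_j + 1 - \mathrm{const}\}$. In case~1 ($|F_j| \ge 2$), an elementary case analysis of sums of $\pm 1$ terms shows the maximum concentration of $\sum_{l \in F_j} a_{jl} x_l$ on any two-point set is at most an absolute constant $\kappa < 1$ (in the extreme instance $|F_j|=2$ with $a_{j,l_1}=\pm a_{j,l_2}$, the distribution is $\{-2a,0,2a\}$ with masses $\{\tfrac14,\tfrac12,\tfrac14\}$, giving $\kappa = \tfrac34$). In case~2 ($|F_j|=1$ with entry $a_{j*} \neq \pm 1$), the two possible values $\pm a_{j*}$ cannot both lie in a set of the form $\{c' - 1, c' + 1\}$ without forcing $a_{j*} = \pm 1$, so the conditional probability is at most $\tfrac12 \le \kappa$. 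Iterating across the $s$ rows yields $\P_{\Bx}(A_s\Bx \in \Bc + \CC_s) \le \kappa^s$, which combined with the lower bound $(1-\ep)/2^{r_0}$ gives $\kappa^s \ge (1-\ep)/2^{r_0}$ and hence $s = O_{r_0}(1)$; taking $\ep$ small enough (or absorbing the absolute factor $\log 2 / \log(1/\kappa)$ into the $O_{C,\ep}(1)$ bound for $r_0$ furnished by Lemma~\ref{lemma:drows}), we conclude $s \le r_0$.

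The main technical obstacle is uniformly certifying the per-row bound $\kappa < 1$ across all subcases of the ill condition, particularly the degenerate case $|F_j|=2$ with coinciding coefficients, where the worst constant $\kappa = 3/4$ arises; this prevents directly obtaining the sharper per-row factor $\tfrac12$ and is the reason the bound on $s$ comes out as $O(r_0)$ in the literal iteration rather than exactly $r_0$. However, since $r_0$ enters the conclusion of Lemma~\ref{lemma:inverse:local} only as an $O_{r_0}(1)$ bound on $|I_A|$ (with $|I_A| \le \sum_{j=1}^s |E_j| = O_{r_0}(1)$), this suffices for the downstream applications.
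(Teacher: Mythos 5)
Your proposal follows essentially the same route as the paper's proof: discard the $L\CC_{r_0}$ term at a cost of $2^{r_0}$ (this is \eqref{eqn:local:large}, which you obtain by pigeonholing over $\By^*\in\CC_{r_0}$), then peel the ill rows one at a time through a conditional factorization as in \eqref{eqn:prob:conditional}, bounding each factor by the probability that the fresh signed sum over the new columns $F_j$ lands in a prescribed two-point set of spacing $2$. Where you diverge is the per-row constant, and there you are in fact more careful than the paper: the paper's observation that a signed sum with at least two non-zero coefficients lies in $a+\{-1,1\}$ with probability at most $1/2$ is false --- with two fresh coefficients equal to $1$ and target $\{0,2\}$ the probability is $3/4$, precisely the extremal configuration you isolate --- so the paper's displayed inequality $(1-\ep)/2^{r_0}\le(1/2)^s$ is not justified as stated, and the correct outcome of this argument is $(1-\ep)/2^{r_0}\le(3/4)^s$, i.e.\ $s\le\log_{4/3}\bigl(2^{r_0}/(1-\ep)\bigr)=O(r_0)$, which is what you prove. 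Thus neither your argument nor the paper's delivers the literal bound $s\le r_0$; but, as you correctly point out, the claim is only invoked to conclude $|I(A)|\le\sum_{1\le i\le s}|E_i|=O_{r_0}(1)$ in the proof of Lemma \ref{lemma:inverse:local}, so the weaker bound $s=O_{r_0}(1)$ suffices and nothing downstream is affected. Your case analysis (two-point mass at most $3/4$ when $|F_j|\ge 2$, and at most $1/2$ when $|F_j|=1$ with entry $\neq\pm1$, since $\{a_{j*},-a_{j*}\}$ can only fill a doubleton of spacing $2$ if $a_{j*}=\pm1$) is sound, and the lower-triangular peeling over the disjoint sets $F_j$ is exactly the paper's conditional-probability mechanism. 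In short: your proof is correct up to replacing the stated conclusion by $s=O_{r_0}(1)$, and it actually repairs a small slip in the paper's own justification of this claim.
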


\begin{proof}(of Claim \ref{claim:s:r_0}) Observe that if the sequence $a_1,\dots,a_k\in \R$ has at least two non-zero entries or one non-zero entry different from $\pm 1$, then $\P(\sum_i \ep_i a_i \in a+\{-1,1\}) \le 1/2$. \ To complete the proof one just needs to rewrite \eqref{eqn:local:large} as product of conditional probabilities as in \eqref{eqn:prob:conditional},

$$(1-\ep)/2^{r_0} \le (1/2)^s.$$
\end{proof}

We now use Claim \ref{claim:s:r_0} to complete the proof of Lemma \ref{lemma:inverse:local}. Let $s+1\le i\le n^{2\ep}$ be an arbitrary index. \ Then, as $E_1,\dots, E_s$ is longest possible, $|E_i \backslash E_1 \cup \dots \cup E_s|\le 1$, and if equality holds then the corresponding non-zero element $a_{i*}$ must be either $1$ or $-1$. \ Set

$$I(A):= \cup_{1\le i\le s} E_i.$$

Then $|I(A)| \le s O_{r_0}(1) = O_{r_0}(1)$, completing the proof.
\end{proof}

To proceed further, we apply Lemma \ref{lemma:inverse:local} to each $A=Y_{i|i_1,\dots,i_l}, i\in \CI_g$, and set

$$I(M_0'):=I_1\cup \dots \cup I_l \cup_{i\in \CI_g} I(A).$$

It is clear that this index set has size at most

$$|I(M_0')| \le O(n^{3\ep}\log n)+ n_0 O_{r_0}(1) = O(n^{1-2\ep}).$$

Putting together, we have obtained the following: for every vector $\row_{i_0}(M_0')$ with $i_0\in \{(i-1)(r_0+n^{2\ep})+r+1,\dots, i(r_0+n^{2\ep}), 1\le i\le n_0\}$, its restriction over the components indexing from $\bar{I}(M_0')$, $\row_{i_0}^{[\bar{I}(M_0')]}$, is either zero or contains exactly one element from $\pm1$. \ Recall the definition of $M_0'$ from \eqref{eqn:M_0M_0'}, we can restate the result in terms of $M$ as follows.

\begin{lemma}\label{lemma:summary}
There exists a set $I(M)(=I(M_0'))$ of exceptional indices with $|I(M)|=O(n^{1-2\ep})$ such that the following holds for each $\row_{i_0}(M)$ with $i_0\in \{(i-1)(r_0+n^{2\ep})+r_0+1,\dots, i(r_0+n^{2\ep}), 1\le i\le n_0\}$: the row vectors $\row_{i_0}^{[\bar{I}(M)]}- \sum_{k=1}^{r_0} t_{i_0k}\row_{(i-1)(r_0+n^{2\ep})+k}^{[\bar{I}(M)]}$ are either zero or contain exactly one element from $\pm1$.
\end{lemma}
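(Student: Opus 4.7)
The plan is to obtain Lemma \ref{lemma:summary} as a direct translation of the structural conclusion already established for $M_0'$ back to the original matrix $M$. First I would set $I(M):=I(M_0')=I_1\cup\cdots\cup I_l\cup\bigcup_{i\in\CI_g}I(A)$ exactly as defined in the preceding paragraph, and verify the size bound. The longest bad sequence has length $l=O(\log n)$ (from $n^{-C}\le(1-\eps)^l$), and each $|I_{i_j}|\le n^{3\eps}$, giving a contribution $O(n^{3\eps}\log n)$; the suitable-index part contributes at most $|\CI_g|\cdot\max_{i\in\CI_g}|I(A)|\le n_0\cdot O_{r_0}(1)=O(n^{1-2\eps})$, since $n_0\le n/(r_0+n^{2\eps})$. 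Summing yields $|I(M)|=O(n^{1-2\eps})$ after absorbing the (smaller) bad-sequence contribution by shrinking $\eps$ slightly if needed.

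Second, I would use the explicit form of the row operations defining $M_0=M_{n_0}\cdots M_1$ given by \eqref{eqn:ii_0}. For each $i_0\in\{(i-1)(r_0+n^{2\eps})+r_0+1,\dots,i(r_0+n^{2\eps})\}$, the matrix $M_0$ fixes every row of $M$ outside the ``extra'' slots and acts on the $i_0$-th row as
\[
\row_{i_0}(M_0')=\row_{i_0}(M)-\sum_{k=1}^{r_0}t_{i_0k}\,\row_{(i-1)(r_0+n^{2\eps})+k}(M).
\]
Restricting both sides to the coordinates indexed by $\bar I(M)$ produces precisely the vector whose structure the lemma is asserting.

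Third, combining the preceding display with the already-established fact that $\row_{i_0}^{[\bar I(M_0')]}(M_0')$ is either zero or has exactly one $\pm 1$ entry (the output of applying Lemma \ref{lemma:inverse:local} to each $Y_{i\,|\,i_1,\dots,i_l}$ for the suitable indices $i\in\CI_g$, after the bad-sequence columns $I_1\cup\cdots\cup I_l$ have been removed) immediately gives the conclusion of Lemma \ref{lemma:summary}. There is no real obstacle to overcome here: the nontrivial work has already been carried out in Lemmas \ref{lemma:mrows}, \ref{lemma:blocks:str1} and especially \ref{lemma:inverse:local}, and this lemma is essentially a bookkeeping step that re-expresses the structural conclusion in terms of $M$ itself rather than the transformed matrix $M_0'$; the only quantitative verification required is the $O(n^{1-2\eps})$ cardinality estimate on $I(M)$ above.
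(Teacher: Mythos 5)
Your proposal is correct and follows essentially the same route as the paper: you define $I(M)=I(M_0')=I_1\cup\dots\cup I_l\cup\bigcup_{i\in\CI_g}I(A)$, bound its size by $O(n^{3\ep}\log n)+n_0\,O_{r_0}(1)=O(n^{1-2\ep})$, and translate the structural conclusion for $\row_{i_0}(M_0')$ back to $M$ via the explicit row operations \eqref{eqn:ii_0} defining $M_0$, exactly as the paper does when it states the lemma as a restatement of what was just established for $M_0'$.
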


To complete the proof of Theorem \ref{theorem:discrete:nostructure:1}, we show that there is a projection on to $n-O(n^{1-\ep})$ components for which the base vectors $\row_{(i-1)(r_0+n^{2\ep})+k}, 1\le i\le n_0,1\le k\le r_0$, generate a  subspace of dimension at most $r_0$. \ For convenience, write $\row_{ik}:= \row_{(i-1)(r_0+n^{2\ep})+k}, 1\le i\le n_0, 1\le k\le r_0$. Lemma \ref{lemma:mrows} (ii) applied to these $m$ vectors, with $m=n_0 r_0$, implies that there exist $s$ vectors $\row_{i_1},\dots, \row_{i_s}$ among the base vectors $\row_{ik}$, where $s\le r_0$, and an index set $J=J(M)$ of size $|J|=O(m n^\ep)= O(n^{1-\ep})$  such that the following holds for any $\row_{ik}$:

$$\row_{ik}^{[\bar{J}]} \in span (\row_{i_1}^{[\bar{J}]},\dots, \row_{i_s}^{[\bar{J}]}).$$

Let $N_1$ be the set of row indices from $\{(i-1)(r_0+n^{2\ep})+r_0+1,\dots, i(r_0+n^{2\ep}), 1\le i\le n_0\}$, and $N_2$ be the set of column indices from $\bar{I}(M)\cap \bar{J}(M)$. \ The proof of Theorem \ref{theorem:discrete:nostructure:1} is completed by setting $M'$ to be the submatrix of $M$ generated by the rows indexing in $N_1$ and by the columns indexing in $N_2$.

 \subsection{Proof of Theorem \ref{theorem:discrete:main:1}} We will mainly be focusing on the matrix $M'$ obtained in Theorem \ref{theorem:discrete:nostructure:1}. \ Assume that $r=\rank(M'')$, where $r\le r_0$. \ Assume also that $\row_{i_1}(M''),\dots,\row_{i_r}(M''), \\ i_1,\dots, i_r \in N_1$, span the whole row space of $M''$. \ Consider the corresponding row vectors $\row_{i_1},\dots, \row_{i_r}$ of $M$. \ Note that, by the definition of $\CF_{n_1n_2}$, when restricting $\row_{i_j}(M)$ to $M'$, the vectors $\row_{i_j}(M')$ are different from $\row_{i_j}(M'')$ in at most one component. \ Let $I(N_2) \subset N_2$ be the set of column indices of the components where $\row_{i_j}(M')$ differ from $\row_{i_j}(M''), 1\le j\le r$. \ Then $|I(N_2)|\le r$.

Lemma \ref{lemma:mrows} (i) applied to the vectors $\row_{i_1}(M),\dots, \row_{i_r}(M)$ implies a GAP $Q_r\subset \R^r$ of small size and bounded rank and a small set $I_{i_1,\dots,i_r}$ of exceptional indices such that that $Q_r$ contains all restricted columns $\col_i^{[i_1,\dots,i_r]}(M),  i \in \bar{I}_{i_1,\dots,i_n}$. \ In particular, $Q_r$ contains all columns $\col_i^{[i_1,\dots,i_r]}(M'')$, where  $i \in N_2':=\bar{I}_{i_1,\dots,i_n} \cap N_2\cap \bar{I}(N_2)$. \ By passing to a GAP of smaller rank (and still of small size) if needed, one can assume that these restricted column vectors indeed span $Q_r$ (see for instance \cite[Section 8]{TV0}), where we recall the notion of spanning from Section \ref{section:lemmas}.

To this end, because $\row_{i_1}(M''),\dots,\row_{i_r}(M'')$ span the whole row space of $M''$, we just follow the proof of Lemma \ref{lemma:mrows} (i) identically to show that $Q_r\subset \R^r$ can be extended to another GAP $Q\subset \R^{n_1}$ of the same rank and size which contains all of the columns of $M''$ of indexing in $N_2'$. \ Finally, one deletes from $M'$ the columns indexing in $\Bar{N}_2'$ to obtain the new $M'$, which clearly satisfies all of the desired properties (noting that columns deletion does not affect the property of $\CF$.)

\section{Treatment for orthogonal matrices: proof of Theorem \ref{theorem:discrete:main:2}}\label{section:orthogonal}

We first show that, for orthogonal matrices, the structures in Theorem \ref{theorem:discrete:main:1} can be extended to the whole columns without significant increase of the sets of exceptional indices.

\begin{theorem}\label{theorem:discrete:main:2'} Let $0<\ep<1$ and $C$ be positive constants. \ Let $M=(m_{ij})_{1\le i,j\le n}\in \O(n)$ be an orthogonal matrix with $s_0(M)\ge n^{-C}$ for some positive constant $C>0$. \ Then there exist a submatrix $M'$ of $M$ of size $n\times n_2$, with $n_2=n-O(n^{1-\ep})$, and a set of $r=O(1)$ vectors $\Bg_1,\dots,\Bg_r\in \R^{n}$ such that $M'$ can be written as $M''+F$, where $F\in \CF_{nn_2}$ and the columns of $M''$ belong to a GAP of small size generated by $\Bg_1,\dots,\Bg_r$.
\end{theorem}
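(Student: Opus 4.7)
The plan is to upgrade the structural conclusion of Theorem \ref{theorem:discrete:main:1} to the orthogonal setting by iteratively extending the GAP to cover all $n$ rows of $M$, allowing for at most $r_0-r$ additional generators but no loss in rows. First I would apply Theorem \ref{theorem:discrete:main:1} with parameter $\ep' < \ep$ (say $\ep' = \ep/2$) to produce row and column sets $N_1, N_2 \subset [n]$ of sizes $n - O(n^{1-\ep'})$, a GAP $Q \subset \R^{N_1}$ of rank $r \le r_0$ with generators $\Bg_1,\dots,\Bg_r$, integer coefficients $(k_{jk})_{j \in N_2, k \le r}$, and a sparse matrix $F \in \CF_{n_1 n_2}$ such that
\[
M_{ij} = \sum_{k=1}^r k_{jk} (\Bg_k)_i + F_{ij}, \qquad (i,j) \in N_1 \times N_2.
\]
After discarding at most $O(r_0)$ further columns (as in the end of Section \ref{section:general}), I may select base rows $l_1,\dots,l_r \in N_1$ so that the $r \times r$ matrix $((\Bg_k)_{l_s})_{k,s}$ is invertible, thereby identifying $Q$ with its full-rank projection $Q^{[l_1,\dots,l_r]} \subset \R^r$.

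To extend the generators to $\R^n$, I would iterate through the extra rows $\bar N_1$, maintaining a growing set of base rows. For each extra row $\row_i$, I apply Lemma \ref{lemma:drows} to the $(r+1)$-row collection $\{\row_{l_1},\dots,\row_{l_r},\row_i\}$: this yields an exceptional set $E_i$ of size $\le n^{\ep'}$ and a GAP $Q_i \subset \R^{r+1}$ of rank $\le r_0$. By Lemma \ref{lemma:drows}(ii), the row vectors on the complement of $E_i$ span a subspace of dimension $\le \rank(Q_i)$. If $\rank(Q_i) = r$, then $\row_i$ (restricted to $\bar E_i$) is a linear combination of the base rows, and I set $(\tilde \Bg_k)_i := \sum_s c_{is} (\Bg_k)_{l_s}$ where $c_{is}$ are the linear-combination coefficients. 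Otherwise $\rank(Q_i) = r+1$, and I promote $\row_i$ to a new base row. Crucially, the rank cannot grow past $r_0$, so at most $r_0 - r$ promotions occur. After finishing this process I have $r' \le r_0$ base rows, apply Lemma \ref{lemma:drows} once more to them to extract a clean rank-$r'$ GAP $Q' \subset \R^{r'}$, and then use the Structure Extension procedure of Claim \ref{claim:extension} to lift $Q'$ to a GAP $\tilde Q \subset \R^n$ whose generators cover all coordinates. Each surviving column $\col_j(M)$ then lies in $\tilde Q$ up to at most one $\pm 1$ correction, which is absorbed into $F \in \CF_{n, n_2}$.

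The main obstacle is controlling the total exceptional column set: naively, inspecting every extra row would add $n^{\ep'}$ exceptional columns per $i \in \bar N_1$, yielding $|\bar N_1| \cdot n^{\ep'} = O(n)$ in the worst case, which is unusable. The way to overcome this is to invoke the orthogonality of $M$ together with Lemma \ref{lemma:drows}(ii) to stop the inspection early: once $r'$ hits $r_0$, no further promotion can occur, so any subsequent extra row must already lie in the current base-row span on the existing good column set (up to a single $\pm 1$ entry, absorbed into $F$) — no new exceptional columns are incurred. Thus only the first $r_0 - r = O(1)$ inspections contribute, for a total of $O(n^{\ep'}) + O(n^{1-\ep'}) = O(n^{1-\ep/2})$ exceptional columns, well within the desired $O(n^{1-\ep})$ budget. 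Making this "early-stopping" step rigorous, in particular verifying that the not-yet-inspected extra rows genuinely lie in the span of the current base rows on the already-good columns (without the need for additional Lemma \ref{lemma:drows} applications), is the technical heart of the argument; it will use the fact that orthogonality of $M$ places the $|\bar N_1| = O(n^{1-\ep'})$ extra rows in the orthogonal complement of the span of the structured rows, which has matching dimension and hence tightly constrains how they can interact with the GAP structure.
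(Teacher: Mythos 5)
Your proposal stalls exactly at the point you flag as ``the technical heart,'' and the fix you sketch does not work as stated. The early-stopping claim --- that once the running rank saturates, every subsequent extra row lies in the span of the current base rows \emph{on the already-good column set}, incurring no new exceptional columns --- is not something Lemma \ref{lemma:drows} can deliver. Each application of Lemma \ref{lemma:drows} to $\{\row_{l_1},\dots,\row_{l_r},\row_i\}$ only yields the span/GAP containment \emph{after} deleting a fresh exceptional set $E_i$ of size up to $n^{\ep'}$, and this deletion is needed whether or not a promotion occurs; rank saturation bounds the number of promotions by $r_0-r$, but it says nothing about the non-promotion steps, of which there are $O(n^{1-\ep'})$, each a priori contributing its own $E_i$. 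So the naive $O(n)$ loss you correctly identified is not actually removed. The appeal to orthogonality (``the extra rows lie in the orthogonal complement of the structured rows, which has matching dimension'') is a heuristic, not an argument: what must be shown is that for all but $O(n^{1-\ep})$ columns $i$, the tail coordinates $\Bu_i=(m_{(n_1+1)i},\dots,m_{ni})$ are determined by fixed linear relations from the structured parts $\col_i(M'')$, so that the generators can be extended to $\R^n$ by the mechanism of Claim \ref{claim:extension}; nothing in your sketch produces these relations.

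The paper closes this gap with a concrete device you would need to reproduce or replace: after pruning so that each column of $F$ has at most one non-zero entry, it runs a greedy process that repeatedly selects at most $r+1$ unused columns admitting a vanishing combination $\sum_j\alpha_{i_j}\col_{i_j}(M'')=\0$ whose tail combination $\sum_j\alpha_{i_j}\Bu_{i_j}$ is non-zero, with the $F$-supports of the chosen columns disjoint from those used before. Orthogonality of the \emph{columns} of $M$, together with the disjointness of the $F$-supports, forces the tail combinations produced at distinct steps to be pairwise orthogonal non-zero vectors in $\R^{n-n_1}$, so the process stops after at most $n-n_1=O(n^{1-\ep})$ steps and the total exceptional column set has size $O(n^{1-\ep})$. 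At termination, every vanishing combination of the $\col_i(M'')$ over the surviving columns also annihilates the tails, which is precisely the statement \eqref{eqn:orthogonal:span} that the last $n-n_1$ rows of $M$ restricted to the surviving columns lie in the row span of $M''$; only then does the structure-extension step of Lemma \ref{lemma:mrows}(i) lift the GAP to all of $\R^n$. Your per-row inspection scheme, as written, has no counterpart to this dimension-counting-via-orthogonal-tails argument, so the proposal has a genuine gap at its central step.
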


\begin{proof}(of Theorem \ref{theorem:discrete:main:2'}) We first apply Theorem \ref{theorem:discrete:main:1}. \ Without loss of generality, assume that $M'$ consists of the first $n_1$ rows and $n_2$ columns of $M$. \ For $1\le i\le n_2$, consider the $\R^{n_1}$ column vectors of $M'$, $\col_i(M')=(m_{1i},\dots,m_{n_1i})$ and the $\R^{n-n_1}$ vectors $\Bu_i=(m_{(n_1+1)i},\dots, m_{ni})$. \ Using the definition from the proof of Theorem \ref{theorem:discrete:main:1}, one can write

$$\col_i(M')=\col_i(M'')+ \col_i(F),$$

where $r=\rank(M'')=O(1)$ and $F\in \CF_{n_1n_2}$.

Note that, as $\rank(F)\ge \rank(M')-\rank(M'') \ge n-O(n^{1-\ep})-r$, and as $F$ contain at most $n-O(n^{1-\ep})$ non-zero entries, by deleting at most $O(n^{1-\ep})$ columns of $M'$ if needed, one can assume that each column of $F$ contains at most one non-zero entry (or exactly one, but we don't need this fact).

Similarly, as $\dim(span(\col_{i_1}(M),\dots, \col_{i_k}(M)))=k$, we have

\begin{equation}\label{eqn:dim:increase}
\dim(span(\col_{i_1}(M'),\dots,\col_{i_k}(M')))\ge k-r-(n-n_1) =k-r-O(n^{1-\ep}).
\end{equation}

Roughly speaking, our next move consists of two main steps.
\begin{enumerate}[(1)]
\item Starting from $M'$, we showed that there exists an index set $T$ of size $n-O(n^{1-\ep})$ such that when restricting the low rank part $M''$ of $M'$ onto $T$,
$$\row_i^{[T]}(M) \in span\big(\row_i^{[T]}(M''), 1\le i\le n_2\big), \forall n_1+1 \le i\le n.$$
 \vskip .1in
 \item Hence, by the argument from the proof of (i) of Lemma \ref{lemma:mrows}, the GAP containing the columns $\col_i(M''), i\in T$ can be extended to a GAP that contains the column vectors $\col_i(M),i\in T$.
\end{enumerate}

We next explain these ideas in more detail.

{\bf Step 1.} Consider the following process. \ At step $0$, the row index set $S\subset [n_1]$ is set to be empty. \ At step $1\le i$, if possible, we choose $s$ column indices $i_1,\dots, i_{s}\in [n_2]$ that have never been used before, for some $s\le r+1$, so that the following holds.

\begin{enumerate}[(i)]
\item The row indices of the (possibly) non-zero entries of $\col_{i_1}(F),\dots, \col_{i_s}(F)$ must not belong to $S$.
\vskip .1in
\item There exist real coefficients $\alpha_{i_1},\dots,\alpha_{i_s}$ such that $\alpha_{i_1} \col_{i_1}(M'')+\dots + \alpha_{i_s} \col_{i_s}(M'')=\0$ but $\alpha_{i_1} \Bu_{i_1}+\dots + \alpha_{i_s} \Bu_{i_s} \neq \0$, where we recall that $\Bu_i=(m_{(n_1+1)i},\dots, m_{ni})$.
\end{enumerate}

We then add the row indices of the possibly non-zero entries of $\col_{i_1}(F),\dots, \col_{i_s}(F)$ to $S$ and move on to the next step. \ The process terminates if we are not able to proceed further.

By definition, the linear combination $\sum_{j=1}^s\alpha_{i_j}\col_{i_j}(M)$ in each step equals $\sum_{j=1}^s\alpha_{i_j}( \col_{i_j}(F) \oplus \Bu_{i_j})$. \ But as the indices $i_j$ are chosen to be disjoint from the previously-used indices $i_{j'}$, and as the columns of $M$ are orthogonal,

$$(\sum_{j=1}^s\alpha_{i_j}\col_{i_j}(M)) \cdot (\sum_{j'=1}^{s'}\alpha_{i_{j'}}\col_{i_{j'}}(M))=0.$$

On the other hand, by (i) and by definition that each column of $F$ contains at most one non-zero entry, $(\sum_{j=1}^s\alpha_{i_j}\col_{i_j}(F)) \cdot (\sum_{j'=1}^{s'}\alpha_{i_{j'}}\col_{i_{j'}}(F))=0$. \ This implies that

\begin{equation}\label{eqn:u:orthogonal}
(\sum_{j=1}^s\alpha_{i_j}\Bu_{i_j}) \cdot (\sum_{j'=1}^{s'}\alpha_{i_{j'}}\Bu_{i_{j'}})=0.
\end{equation}

By \eqref{eqn:u:orthogonal} and by the fact from (ii) that all the vectors $\sum_{j=1}^s\alpha_{i_j}\Bu_{i_j}$ are non-zero in $\R^{n-n_1}$, our process must terminate after at most $n-n_1=O(n^{1-\ep})$ steps. \ As such, the final index set $S$ has size at most

\begin{equation}\label{eqn:size:J0}
|S|\le (r+1)(n-n_1)=O(n^{1-\ep}).
\end{equation}

Now we consider the collection of columns $\col_i(M')$ of $M'$ where the row index of the possibly non-zero entries of $\col_{i}(F)$ does not belong to $S$. \ Let $T\subset [n_2]$ be the collection of these indices. \ By \eqref{eqn:dim:increase}, \eqref{eqn:size:J0}, and again by the assumption that each column of $F$ contains at most one non-zero entry,

$$|T|=n-O(n^{1-\ep}).$$

For the process cannot be continued, as by (ii), any vanishing linear combination $\alpha_{i_1} \col_{i_1}(M'')+\dots + \alpha_{i_s} \col_{i_s}(M'')=\0, i_1,\dots,i_s\in T$, also implies $\alpha_{i_1} \Bu_{i_1}+\dots + \alpha_{i_s} \Bu_{i_s} = \0$. \ In other words,

\begin{equation}\label{eqn:orthogonal:span}
\row_i^{[T]}(M) \in span\big(\row_i^{[T]}(M''), 1\le i\le n_2\big), \forall n_1+1 \le i\le n,
\end{equation}

where we recall that $\row_i^{[T]}(M)$ denote the projection of $\row_i(M)$ onto the components indexing in $T$, and similarly for $\row_i^{[T]}(M'')$.

{\bf Step 2.} It follows from \eqref{eqn:orthogonal:span}, by the same argument as in the proof of (i) of Lemma \ref{lemma:mrows} (and also of Theorem \ref{theorem:discrete:main:1}), that the GAP containing the columns of $M''$, $\col_i(M''), i\in T$, can be extended to a GAP of the same rank and size that contains the column vectors $\col_i(M)$. \ We complete the proof by letting the new $M'$ be the restriction of $M$ onto the column index set $T$ (and hence the new $F$ is obtained from the old one by adding another $n-n_1$ zero rows).
\end{proof}


To prove Theorem \ref{theorem:discrete:main:2}, we need further preparations. \ By permuting the columns if necessary, one can assume that the first $r$ columns of $M''$ (corresponding to $M'$ of size $n\times n_2$ obtained from Theorem \ref{theorem:discrete:main:2'}) span the whole columns of $M''$. \ To avoid trivial degeneracy, we will regularize the system further as follows.

\begin{claim}[Regularization]\label{claim:regularization}
With an extra loss of at most $O(n^{1-\ep})$ in the number of columns of $M'$, one can assume that every $n_2-r$ rows (in $\R^{r}$) among $n$ rows of the $n\times r$ matrix spanned by $\col_1(M''),\dots,\col_r(M'')$ have full rank.
\end{claim}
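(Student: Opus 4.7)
The plan is an iterative reduction: if the asserted regularity of the $n\times r$ matrix $N := [\col_1(M''), \dots, \col_r(M'')]$ fails, I will extract a very sparse nonzero element in the column space of $N$, use its support to identify $O(n^{1-\ep})$ expendable columns of $M'$, and then decrement the rank parameter $r$ by one. Since initially $r\le r_0 = O_{C,\ep}(1)$, the process halts after at most $r_0$ rounds with a total loss of $O(n^{1-\ep})$ columns of $M'$.

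Suppose the conclusion fails, so some subset $I \subset [n]$ with $|I| = n_2 - r$ makes $N|_I$ have rank strictly less than $r$. Pick a nonzero $\Bv \in \R^r$ in the orthogonal complement of $\{\row_i(N) : i \in I\}$. The vector $\bw := \sum_{j=1}^r v_j \col_j(M'') \in \R^n$ is nonzero, since the first $r$ columns of $M''$ are linearly independent by our standing assumption, yet $\bw_i = 0$ for every $i \in I$. Hence the support $E := \supp(\bw)$ satisfies
\[
|E| \le n - |I| = n - n_2 + r = O(n^{1-\ep}).
\]

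For each $i \in E$, the row $\row_i(F)$ has at most one nonzero entry, located in some column $j(i) \in [n_2]$; set $J(E) := \{j(i) : i \in E,\ \row_i(F)\neq 0\}$, so $|J(E)| \le |E| = O(n^{1-\ep})$. Delete the columns in $J(E)$ from $M'$. On the reduced matrix, every row of $F$ indexed by $E$ is identically zero, and the sparse relation $\sum_{j=1}^r v_j \col_j(M'')|_{\bar E} = 0$ persists (after re-indexing so that the surviving first $r$ columns of $M''$ still span its column space, which is possible as long as a spanning set of size $r$ is available outside $J(E)$; we pick the initial spanning set with this in mind). Consequently, on $\bar E$ the updated $M''$ has column-space rank at most $r-1$. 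Re-choose a maximal linearly independent subset of $r' \le r - 1$ surviving columns to play the role of the new ``first $r'$''; by the structure-extension argument of Claim \ref{claim:extension} adapted to the index set $\bar E$, the original GAP generators $\Bg_1, \dots, \Bg_r$ restrict to generators of a GAP of rank $r'$ containing all surviving columns of $M''$. Rerun the regularity test with $r'$ in place of $r$, and iterate.

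The main technical obstacle is the interplay between the global GAP description (a statement about all $n$ rows) and the local rank reduction (happening on $\bar E$ only). I resolve this by invoking Claim \ref{claim:extension} to upgrade the reduced-rank GAP on $\bar E$ back to $[n]$, absorbing the $O(n^{1-\ep})$ rows in $E$ as a harmless correction (they lie in, or can be added to, the exceptional row set already permitted by Theorem \ref{theorem:discrete:main:2}). Because $r$ strictly decreases at each failed iteration and is bounded by $r_0$ throughout, the process terminates in at most $r_0$ rounds, with cumulative loss $O(r_0 \cdot n^{1-\ep}) = O(n^{1-\ep})$ columns of $M'$, which matches the claim.
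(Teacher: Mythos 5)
Your outer iteration (decrement the rank, at most $r_0$ rounds, losing $O(n^{1-\ep})$ columns per round) is the same as the paper's, and your opening observation is sound: a failure of regularity yields a nonzero $\Bv\in\R^r$ such that $\bw=\sum_{j}v_j\col_j(M'')$ is supported on a set $E$ of only $O(n^{1-\ep})$ rows, which is just a reformulation of the paper's step ``restrict to the $n_2-r$ bad rows, on which the columns of $M''$ span dimension at most $r-1$.'' The gap is in what you do next. A linear relation among the first $r$ columns of $M''$ that holds on $\bar E$ says nothing about their components on $E$; the full columns of $M''$ may still span an $r$-dimensional space, so the surviving columns need not lie in any subspace (let alone GAP) of rank $r-1$. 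Your appeal to Claim \ref{claim:extension} does not repair this: its hypothesis is that \emph{every} row, restricted to the surviving columns, lies in the span of the chosen base rows, and for the rows in $E$ that is precisely the missing statement --- it does not follow from the generators ``restricting'' to $\bar E$. (There is also a minor circularity in choosing the initial spanning set to avoid $J(E)$, since $J(E)$ is only determined after the failure set $I$ is exhibited, but this is secondary.)

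The paper closes exactly this gap by re-running Steps (1) and (2) of the proof of Theorem \ref{theorem:discrete:main:2'}: orthogonality of the columns of $M$, combined with the fact that each column of $F$ has at most one nonzero entry, produces (at the cost of deleting a further $O(n^{1-\ep})$ columns) a large column set $T$ on which \eqref{eqn:orthogonal:span} holds, i.e.\ the rows outside the restricted set are spanned by the rows of the reduced-rank additive part; only then does the extension mechanism of Lemma \ref{lemma:mrows}(i) and Claim \ref{claim:extension} apply and give a new global decomposition $M'=M''+F$ over all $n$ rows with additive rank at most $r-1$, which is what the iteration needs. Your fallback of absorbing $E$ into an ``exceptional row set already permitted'' is not available: in Theorem \ref{theorem:discrete:main:2} the decomposition has $F\in\CF_{nn_2}$ over all $n$ rows, with no exceptional rows, and the subsequent arguments (the uniqueness computation in Theorem \ref{theorem:discrete:main:2''} and the trace argument in Section \ref{section:application:2}) use orthogonality relations $(\Bu_i+\Be_i)\cdot(\Bu_k+\Be_k)$ of full columns, hence require the first $r$ columns of $M''$ to span the remaining ones on all $n$ coordinates. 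So the heart of the claim --- pulling the rank reduction back from $\bar E$ to $[n]$ --- is missing from your argument, and it genuinely requires using the orthogonality of $M$ again rather than only the GAP bookkeeping.
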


\begin{proof}(of Claim \ref{claim:regularization}) Assume otherwise that there are $n_2-r$ rows which span a subspace of dimension at most $r-1$. \ We next restrict $M'$ onto these row indices. By the assumption that the first $r$ columns of the original $M''$ span its column space, the columns of the new (after restriction) $M''$ belongs to the span of its first $r$ columns, and hence a subspace of dimension at most $r-1$.

By applying the steps (1) and (2) of the proof of Theorem \ref{theorem:discrete:main:2'}, one obtains a new $M'$ of size $n\times n_2'$, with $n_2'=n_2-r-O(n^{1-\ep})$, where the columns of the additive part $M''$ span a subspace of dimension at most $r-1$. \ In the next step, choose $r-1$ columns that span this subspace. \ We continue the process if there are $n_2'-(r-1)$ rows (in the submatrix generated by these $r-1$ columns) that span a subspace of dimension at most $r-2$ in $\R^{r-1}$, etc. \ As the process must terminate after at most $r$ steps, the number of columns remaining at termination is at least $n-r\times O(n^{1-\ep})=n-O(n^{1-\ep})$.
\end{proof}

With Claim \ref{claim:regularization} in hand, we now show that the additive part $M''$ can be described as in Theorem \ref{theorem:discrete:main:2}. \ As usual, we can simplify our matrix further as follows.

\begin{itemize}
\item By deleting the columns where $F$ vanishes or has more than one non-zero entries, one can assume that each column of $F$ in $M'=M''+F$ contains exactly one non-zero entry, which can be assumed to be $1$ after an appropriate column sign change.
\vskip .1in
\item Also, by permuting the columns, one can assume that $M'$ consists of the first $n_2$ columns of $M$, and the first $r$ columns of $M''$ span the whole column vectors of $M'$.
\vskip .1in
\item Finally, by permuting the rows, one can assume that the first $n_2$ columns of $M$ take the form $\Bu_i+\Be_i$, where $\Bu_i=(u_{1i},\dots, u_{ni}) =\col_i(M'') \in \R^n$ belongs to a GAP $Q$ of bounded rank and small size, and $\Be_i$ are the standard vectors.
\end{itemize}

We restate Theorem \ref{theorem:discrete:main:2} as follows.

\begin{theorem}\label{theorem:discrete:main:2''}
For any $r+1\le i\le n_2$, one can represent $\Bu_i$ as

$$\Bu_i = d_{i1}\Bu_1+\dots+ d_{ir}\Bu_r,$$

where $d_{i1},\dots, d_{ir}$ are uniquely determined from the first $r$ column vectors $\Bu_1,\dots,\Bu_r$ by the formula

$$(u_{i1},\dots, u_{ir}) = d_{i1} (u_{11},\dots, u_{1r}) +\dots + d_{ir} (u_{r1},\dots, u_{rr}).$$

In other words,  the matrix $M''$, when restricting to the first $n_2$ rows, can be written as

\[\left( \begin{array}{cc}
U & UD^T\\
DU  & DUD^T
\end{array}\right),
\]

where $U$ is a square matrix of size $r$, and $D$ is an $(n_2-r)\times r$ matrix.
\end{theorem}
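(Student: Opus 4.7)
The plan is to prove the theorem in three steps, with the essential use of orthogonality appearing in the middle step.

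\textbf{Step 1 (existence of the coefficients).} Since $\rank(M'')=r$ and the first $r$ columns $\Bu_1,\ldots,\Bu_r$ span the column space of $M''$ by the setup reduction preceding the theorem, for each $r<i\le n_2$ there exist unique coefficients $d_{i1},\ldots,d_{ir}$ with
\[
\Bu_i=\sum_{j=1}^r d_{ij}\Bu_j\quad\text{in }\R^n.\qquad(*)
\]
Reading $(*)$ coordinate by coordinate gives the \emph{column relation} $u_{ki}=\sum_{j=1}^r d_{ij}u_{kj}$, valid for every $k\in[n]$.

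\textbf{Step 2 (orthogonality upgrades the column relation to a row relation).} Since $\col_i(M)=\Bu_i+\Be_i$ for $i\le n_2$ and the columns of $M$ are orthonormal, expanding $\langle\col_i(M),\col_k(M)\rangle=\delta_{ik}$ yields the pairing identity
\[
\langle \Bu_i,\Bu_k\rangle=-(u_{ki}+u_{ik})\qquad\text{for all } i,k\in[n_2],
\]
which one checks is valid also for $i=k$. Substituting $(*)$ into the left-hand side gives $\langle\Bu_i,\Bu_k\rangle=-\sum_j d_{ij}(u_{kj}+u_{jk})$; equating with the pairing identity and cancelling $u_{ki}=\sum_j d_{ij}u_{kj}$ via the column relation leaves exactly
\[
u_{ik}=\sum_{j=1}^r d_{ij}u_{jk}\qquad(k=1,\ldots,r),
\]
which is precisely the row formula $(u_{i1},\ldots,u_{ir})=\sum_j d_{ij}(u_{j1},\ldots,u_{jr})$ claimed in the theorem.

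\textbf{Step 3 (uniqueness and block form).} By Claim~\ref{claim:regularization}, every $n_2-r$ rows of the $n\times r$ matrix $(\Bu_1\mid\cdots\mid\Bu_r)$ have rank $r$; in particular this matrix has rank exactly $r$. After a harmless further row permutation I may assume that its first $r$ rows are linearly independent, equivalently, that the top-left $r\times r$ block $U=(u_{jk})_{1\le j,k\le r}$ of $M''$ is invertible. The row formula of Step~2 then becomes a nonsingular $r\times r$ linear system that pins down $(d_{i1},\ldots,d_{ir})$ uniquely. Setting $D_{i-r,j}:=d_{ij}$, the four blocks of $M''$ restricted to rows $1,\ldots,n_2$ can be read off from $(*)$ and the row formula, producing the promised decomposition $\begin{pmatrix}U & UD^T\\ DU & DUD^T\end{pmatrix}$.

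The main obstacle is Step~2: recognizing that orthogonality is precisely what one needs to convert the column relation into the corresponding row relation with the \emph{same} coefficients, equivalently, that $M''$ restricted to its first $n_2$ rows admits a symmetric rank-$r$ factorization $AUA^T$ with $A=\begin{pmatrix}I_r\\D\end{pmatrix}$. In the general (non-orthogonal) setting of Theorem~\ref{theorem:discrete:main:1} the row and column coefficients would in general differ and no such symmetric structure would hold.
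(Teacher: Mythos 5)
Your Steps 1 and 2 are correct and are essentially the paper's own computation: the paper also writes $\Bu_i=\sum_j x_j\Bu_j$, expands $(\Bu_i+\Be_i)\cdot(\Bu_k+\Be_k)=0$ for $k\le r$, and uses $(\Bu_j+\Be_j)\cdot(\Bu_k+\Be_k)=\delta_{jk}$ to cancel down to $(\Be_i-\sum_j x_j\Be_j)\cdot\Bu_k=0$, which is exactly your row formula; your ``pairing identity'' is just a repackaging of the same orthogonality bookkeeping. The reading-off of the four blocks from the column relation plus the row formula is also fine.

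The problem is Step 3. The ``harmless further row permutation'' is not available to you at this point, and it is not what the theorem asks for. The row indices are already tied to the column indices by the normalization $\col_i(M)=\Bu_i+\Be_i$ fixed before the theorem: a pure row permutation destroys the $\Be_i$ structure, and a simultaneous row-and-column permutation that moves some row $k_0>r$ into the top block also replaces one of the designated first $r$ columns by $\col_{k_0}$, so the new first $r$ columns need no longer span the column space of $M''$ and the matrices $U,D$ in the statement change. Moreover, the uniqueness claim in Theorem~\ref{theorem:discrete:main:2''} is about the \emph{given} top-left block $U=(u_{jk})_{1\le j,k\le r}$: you must show that this particular $r\times r$ system is nonsingular, not that some permuted version is. Note also that Claim~\ref{claim:regularization} alone says nothing about $r$ specific rows (it only controls sets of $n_2-r$ rows), so invertibility of $U$ cannot be obtained from regularization by itself.

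The fix is short and you already have all the ingredients: the row formula you proved in Step 2 shows that every row $(u_{i1},\dots,u_{ir})$, $r+1\le i\le n_2$, lies in the span of the first $r$ rows of the $n\times r$ matrix $(\Bu_1\mid\cdots\mid\Bu_r)$; by Claim~\ref{claim:regularization} these $n_2$ rows have rank $r$, so the first $r$ rows must themselves be linearly independent, i.e.\ $U$ is invertible, and the coefficients $(d_{i1},\dots,d_{ir})$ are uniquely determined by the row formula. This is precisely how the paper concludes, with no extra permutation.
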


\begin{proof}(of Theorem \ref{theorem:discrete:main:2''}) Because the first $r$ columns span the whole column space of $M''$, for any $r+1\le i\le n_2$, there exist real numbers $x_1,\dots,x_r$ (not necessarily uniquely determined) such that $\Bu_i =x_1 \Bu_1+\dots + x_r \Bu_r$. \ For any $1\le k\le r$, the condition of orthogonality $(\Bu_i+\Be_i) \cdot (\Bu_k+\Be_k)=0$ implies that

\begin{align*}
0&=\sum_{1\le j\le r} x_j (\Bu_j\cdot \Bu_k + \Bu_j \cdot \Be_k) + \Be_i \cdot \Bu_k\\
&=-\sum_{1\le j\le r} x_j \Be_j \cdot \Bu_k+ \Be_i \cdot \Bu_k = (\Be_i-\sum_{1\le j\le r} x_j \Be_j)\cdot \Bu_k,
\end{align*}

where we used the initial assumption that $(\Bu_j+\Be_j) \cdot (\Bu_k+\Be_k) = \delta_{jk}$ for $1\le j\le r$.

On the other hand, we can rewrite $(\Be_i-\sum_{1\le j\le r} x_j \Be_j)\cdot \Bu_k =0, 1\le k\le r$, as

\begin{equation}\label{eqn:orthogonal:linearcombination}
(u_{i1},\dots, u_{ir}) = x_1 (u_{11},\dots, u_{1r}) +\dots + x_r (u_{r1},\dots, u_{rr}).
\end{equation}

In particular, the first $r$ rows $(u_{11},\dots, u_{1r}),\dots,(u_{r1},\dots, u_{rr})$ span the whole row space of the $n_2\times r$ submatrix spanned by $\Bu_1^{[1,\dots,n_2]},\dots,\Bu_r^{[1,\dots,n_2]}$. \ By the assumption of Claim \ref{claim:regularization}, these $n_2$ row vectors have full rank in $\R^r$, and so the representation in \eqref{eqn:orthogonal:linearcombination} is unique: the coefficients $(x_1,\dots,x_r)$ must equal $(d_{i1},\dots,d_{ir})$ introduced in the statement.

\end{proof}

\section{Application: general matrices}\label{section:application:1}

As a first application, we deduce from Theorem \ref{theorem:discrete:main:1} that general matrices of sufficiently small entries cannot be near-invariant with respect to the hypercube.

\begin{theorem}\label{theorem:application:generic} For any $C>0$ and $0<\ep<1/2$, there exist $n_0=n_0( C,\ep)$ and $c=c( C,\ep)>0$ such that the following holds for all $n\ge n_0$. \ Let $M=(m_{ij})_{1\le i,j\le n}$ be a matrix with $\rank(M)\ge (1/2+\ep)n$ and $|m_{ij}|\le c, \forall{1\le i,j\le n}$. \ Then $s_0(M)\le n^{-C}$.

\end{theorem}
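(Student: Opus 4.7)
The plan is to argue by contradiction: suppose $s_0(M) \ge n^{-C}$, and then show that $M$ cannot have rank at least $(1/2 + \ep) n$ once $c$ is taken sufficiently small. The engine is Theorem \ref{theorem:discrete:main:1}, which produces a submatrix $M'$ of $M$ of size $n_1 \times n_2$ with $n_1, n_2 = n - O_{C,\ep}(n^{1-\ep})$ and a decomposition $M' = M'' + F$, where $F \in \CF_{n_1 n_2}$ and $\rank(M'') \le r$ for some $r = O_{C,\ep}(1)$.

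First, I would use the sparsity of $F$ to lower bound the size of its column support. Since the nonzero rows of $F$ are $\pm$ standard basis vectors, $\rank(F) = |S|$, where $S \subseteq [n_2]$ denotes the set of columns carrying a nonzero entry of $F$. Combining the submatrix estimate $\rank(M) \le \rank(M') + (n - n_1) + (n - n_2)$ with $\rank(M') \le \rank(M'') + \rank(F) \le r + |S|$ and the hypothesis $\rank(M) \ge (1/2 + \ep) n$ yields
\[
|S| \ge (1/2 + \ep) n - r - O(n^{1-\ep}),
\]
which exceeds $r + 1$ once $n \ge n_0(C, \ep)$.

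Next I would extract an $(r+1) \times (r+1)$ submatrix of $M''$ that looks like a signed identity up to small perturbations. Pick any column indices $j_1, \ldots, j_{r+1} \in S$, and for each $k$ choose a row index $i_k \in [n_1]$ with $F_{i_k, j_k} = s_k \in \{\pm 1\}$. Since each row of $F$ has at most one nonzero, the $i_k$'s are automatically distinct, and $F_{i_k, j_l} = 0$ whenever $k \neq l$. Therefore the submatrix $N$ of $M''$ on rows $i_1, \ldots, i_{r+1}$ and columns $j_1, \ldots, j_{r+1}$ satisfies $N_{kk} = m_{i_k, j_k} - s_k$, of magnitude in $[1-c, 1+c]$, and $N_{kl} = m_{i_k, j_l}$ for $k \neq l$, of magnitude at most $c$.

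Finally, I would expand $\det(N)$ using the Leibniz formula. The identity permutation contributes a product of magnitude at least $(1-c)^{r+1}$, while every other permutation has at least two non-fixed points and therefore contributes a product of magnitude at most $c^2 (1+c)^{r-1}$. Summing over the fewer than $(r+1)!$ non-identity terms, the error is at most $(r+1)! \cdot c^2 (1+c)^{r-1}$, which can be made smaller than $(1-c)^{r+1}/2$ by choosing $c = c(C, \ep)$ small enough as an explicit function of $r$. This forces $\det(N) \neq 0$, so $\rank(N) = r + 1$; but $N$ is a submatrix of $M''$, so $\rank(N) \le \rank(M'') \le r$, a contradiction. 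The only step requiring any care is tracking how small $c$ must be relative to the constant $r = r(C, \ep)$ supplied by Theorem \ref{theorem:discrete:main:1}, which amounts to quantitative bookkeeping rather than a genuine obstacle.
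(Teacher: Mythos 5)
Your proposal is correct, but its endgame differs from the paper's. Both arguments start the same way, invoking Theorem \ref{theorem:discrete:main:1} to write a large submatrix as $M''+F$ with $\rank(M'')\le r=O_{C,\ep}(1)$ and $F\in\CF_{n_1n_2}$. The paper then goes through Claim \ref{claim:application:identity}: using that $F$ has at most $n_1$ nonzero entries, it deletes all columns of $F$ with two or more nonzeros (this is exactly where the hypothesis $\rank(M)\ge(1/2+\ep)n$ is consumed) to extract an $\ep n\times\ep n$ signed-permutation block, and then derives the contradiction by writing one column of the low-rank part as a linear combination of $r$ spanning columns and showing some coefficient must be simultaneously large (of order $1/(rc)$, from the dominant diagonal entry in one coordinate) and small (of order $c$, from another coordinate). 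You instead count the nonzero columns of $F$ via rank subadditivity, pick just $r+1$ of them together with the rows carrying their $\pm1$ entries, and observe that the resulting $(r+1)\times(r+1)$ submatrix of $M''$ is diagonally dominant (diagonal $\ge 1-c$, off-diagonal $\le c$), hence nonsingular for $c$ small in terms of $r$, contradicting $\rank(M'')\le r$ directly. Your route is more economical: it bypasses Claim \ref{claim:application:identity} entirely, needs only $r+1$ ``good'' columns rather than $\ep n$ of them, and in fact only uses $\rank(M)\gtrsim n^{1-\ep}$ (plus a constant), so it shows the $(1/2+\ep)n$ rank hypothesis can be relaxed --- consistent with the paper's remark that it did not try to optimize this threshold. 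The only bookkeeping point worth noting is that if the GAP containing the columns of $M''$ were allowed a nonzero offset $\Bg_0$, the bound would be $\rank(M'')\le r+1$ rather than $r$; this is harmless (take $r+2$ columns), and in any case the GAP supplied by Theorem \ref{theorem:optimal} is symmetric, so the paper itself uses $\rank(M'')\le r$ exactly as you do.
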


We have not tried to sharpen the requirement of $\rank(M)\ge (1/2+o(1))n$ here, but it can be easily seen that for Theorem \ref{theorem:application:generic}, the rank of $M$ must be sufficiently large. We also invite the reader to consult Appendix \ref{section:stochastic} for a related result with explicit constants for stochastic matrices of large permanent.


To prove Theorem \ref{theorem:application:generic}, we need a simple claim stated below.

\begin{claim}\label{claim:application:identity}
Assume that $F\in \CF_{n_1n_2}$ with $n_1,n_2=(1-o(1))n$ and $\rank(F)\ge (1/2+\ep)n$. \ Then $F$ contains a block of size $\ep n \times \ep n$  which contains exactly one non-zero entry in each row and column.
\end{claim}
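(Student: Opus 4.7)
\textbf{Proof proposal for Claim \ref{claim:application:identity}.}
The plan is to identify the rank of $F$ with a purely combinatorial quantity, namely the number of nonzero columns, and then extract the desired block from a ``full'' diagonal-like substructure. Since $F \in \CF_{n_1 n_2}$, every row of $F$ is either zero or of the form $\pm \Be_{j}$ for some column index $j$. Consequently, the row space of $F$ is contained in the span of $\{\Be_j : \col_j(F) \ne \mathbf{0}\}$, and in fact equals this span (each such $\Be_j$ is realized up to sign by some row). Therefore
\[
\rank(F) \;=\; \#\{\, j \in [n_2] : \col_j(F) \ne \mathbf{0}\,\}.
\]
In particular, at least $(1/2+\ep)n$ columns of $F$ contain a nonzero entry.

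Let $J \subset [n_2]$ be the set of those nonempty columns, so $|J| \ge (1/2+\ep)n$. For each $j \in J$ pick a row index $r_j \in [n_1]$ such that the (unique) nonzero entry of row $r_j$ lies in column $j$. Because each row of $F$ has at most one nonzero entry, the map $j \mapsto r_j$ is injective, so the rows $\{r_j : j \in J\}$ are pairwise distinct. Restricting $F$ to the rows $\{r_j : j \in J\}$ and the columns $J$ yields a $|J| \times |J|$ submatrix in which row $r_j$ has its single nonzero entry (a $\pm 1$) exactly in the column corresponding to $j$; equivalently, after reindexing this submatrix is a signed permutation matrix.

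To finish, since $|J| \ge (1/2+\ep)n \ge \ep n$ for all sufficiently large $n$, choose any subset $J' \subset J$ with $|J'| = \lceil \ep n \rceil$ and take the submatrix on rows $\{r_j : j \in J'\}$ and columns $J'$. By construction it is an $\ep n \times \ep n$ block of $F$ containing exactly one nonzero entry (which is $\pm 1$) in each row and each column, as required. There is essentially no obstacle here: the only point to watch is that the row-sparsity of $F \in \CF_{n_1 n_2}$ forces the row space to be spanned by standard basis vectors, which is what converts a rank bound into a count of ``useful'' columns and lets us build the permutation-type block greedily.
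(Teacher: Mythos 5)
Your proof is correct, and it takes a genuinely different (and in fact sharper) route than the paper's. The paper argues by counting: since each row of $F$ has at most one non-zero entry, $F$ has at most $n_1$ non-zero entries, hence at most $n_1/2$ columns with two or more non-zero entries; deleting those columns lowers the rank by at most $n_1/2$, and the hypothesis $\rank(F)\ge(1/2+\ep)n$ is exactly what is consumed to leave a residual matrix of rank at least $\ep n$ with at most one non-zero entry per row \emph{and} per column, from which the block is read off. You instead prove the exact identity $\rank(F)=\#\{j:\col_j(F)\ne\mathbf{0}\}$ (every non-zero row is $\pm\Be_j$ for some non-empty column $j$, and each non-empty column contributes such a row), then pick one representative row $r_j$ per non-empty column; injectivity of $j\mapsto r_j$ follows from row-sparsity, and the resulting $|J|\times|J|$ submatrix is a signed permutation (indeed diagonal after reindexing), from which any $\ep n\times\ep n$ sub-block works. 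What your argument buys is that no slack is needed: you obtain a block of size $\rank(F)\times\rank(F)$, so the hypothesis could be weakened to $\rank(F)\ge\ep n$ and the conditions $n_1,n_2=(1-o(1))n$ are not used at all, whereas the paper's deletion step genuinely needs the $1/2$ in $(1/2+\ep)n$. What the paper's version buys is brevity of a different kind (a two-line counting estimate), but your exact rank identity is the cleaner mechanism and is fully correct as written.
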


\begin{proof}(of Claim \ref{claim:application:identity}) Recall that each row of $F$ is either zero or contains exactly one non-zero entry. \ Thus the total number of non-zero entries of $F$ is at most $n_1$. \ As such, the number of columns of $F$ that contain at least two non-zero entries is at most $n_1/2$. \ Consider the submatrix $F'$ of $F$ obtained by deleting these columns; then

$$\rank(F')\ge \rank(F)-n_1/2 \ge \ep n.$$

By definition, each row and column of $F'$ has at most one non-zero entry; thus $F'$ contains a block of size $\ep n \times \ep n$ which contains exactly one non-zero entry in each row and column as desired.
\end{proof}

\begin{proof}(of Theorem \ref{theorem:application:generic}) Assume otherwise. \ After appropriate row and column permutations and sign changes, by Theorem \ref{theorem:discrete:main:1} and Claim \ref{claim:application:identity}, one can assume that the top-left $n'\times n'$ corner of $M$, where $n'=\ep n$, can be written as $U+F$, with $F$ being the identity matrix $I_{n'}$ and the column vectors $\Bu_1,\dots,\Bu_{n'}$ of $U$ belonging to a GAP $P$ of small size and bounded rank $r$. \ In what follows we will not use this information in its full strength, but only a weaker fact, that the space generated by $\Bu_i$ has bounded dimension $r$ (hence, Theorem \ref{theorem:discrete:nostructure:1} would suffice). \ Without loss of generality, assume that the first $r$ columns $\Bu_1,\dots, \Bu_r$ span this subspace in $\R^{n'}$.

For any $r+1\le i\le n'$, there exist $c_{i1},\dots, c_{ir}$ such that $\Bu_i = c_{i1}\Bu_1+\dots+c_{ir}\Bu_r$. \ In particular, for $i=r+1$,

\begin{equation}\label{eqn:application:orthogonal:1}
\Bu_{r+1} = c_{(r+1)1}\Bu_1+\dots+c_{(r+1)r}\Bu_r.
\end{equation}

Thus, in the $(r+1)^{th}$ component,

\begin{equation}\label{eqn:application:orthogonal:1'}
u_{(r+1)(r+1)} = c_{(r+1)1}u_{(r+1)1}+\dots+c_{(r+1)r}u_{(r+1)r}.
\end{equation}

By definition, as $|u_{(r+1)(r+1)}+1|=|m_{(r+1)(r+1)}|\le c$,

\begin{equation}\label{eqn:application:orthogonal:2}
|u_{(r+1)(r+1)}| \ge 1-c.
\end{equation}

As the off-diagonal terms, $u_{(r+1)1}=m_{(r+1)1}, \dots, u_{((r+1)r}=m_{(r+1)r}$, all have absolute value at most $c$ by assumption, we have

$$1-c \le c(|c_{(r+1)1}| + \dots + |c_{(r+1)r}|) \le rc \max \{|c_{(r+1)1}|,\dots,|c_{(r+1)r}|\}.$$

Assume that the maximum above is achieved at some $1\le i_0\le r$,

\begin{align}\label{eqn:application:orthogonal:2'}
|c_{(r+1)i_0}|=\max \{|c_{(r+1)1}|,\dots,|c_{(r+1)r}|\} \ge (1-c)/rc.
\end{align}

Next, consider \eqref{eqn:application:orthogonal:1} in the $i_0^{th}$ component,

$$u_{i_0(r+1)} = c_{((r+1)1}u_{i_01}+\dots+c_{(r+1)r}u_{i_0r}.$$

Equivalently,

\begin{align}\label{eqn:application:orthogonal:3}
-c_{(r+1)i_0}u_{i_0i_0}= &-u_{i_0(r+1)} + c_{((r+1)1}u_{i_01}+\dots+c_{(r+1)(i_0-1)} u_{i_0(i_0-1)}\nonumber \\
&+ c_{(r+1)(i_0+1)} u_{i_0(i_0+1)}+ \dots +c_{(r+1)r}u_{i_0r}.
\end{align}

On the other hand, for the same reason as in \eqref{eqn:application:orthogonal:2}, the diagonal term $u_{i_0i_0}$ has large absolute value, $|u_{i_0i_0}| \ge 1-c$. \ Similarly, the off-diagonal terms, $u_{i_01}=m_{i_01}, \dots, u_{i_0r}=m_{i_0r}, u_{i_0(r+1)}=m_{i_0(r+1)},$ all have absolute value at most $c$. \ It thus follows from \eqref{eqn:application:orthogonal:3} that

$$|c_{(r+1)i_0}|(1-c) \le c+ r c |c_{(r+1)i_0}|.$$

If $c$ is chosen to be strictly smaller $1/(1+r)$, then this gives

$$|c_{(r+1)i_0}|\le c/(1-(r+1)c).$$

However, this contradicts \eqref{eqn:application:orthogonal:2'} when $c$ is selected sufficiently small depending on $r$.
\end{proof}

\section{Application: proof of Theorem \ref{theorem:orthogonal:2}}\label{section:application:2}

In this section we prove Theorem \ref{theorem:orthogonal:2} by invoking Theorem \ref{theorem:discrete:main:2}. \ Assume that the matrix of the first $r$ columns of $M''$ has the form $(U,DU)$, where $U$ is a non-singular square matrix of size $r$, and by Claim \ref{claim:regularization}, one can also assume that $\rank(D)=r$.

The fact that the first $r$ columns are orthogonal yields the following

\begin{equation}\label{eqn:application:domination:0}
(U+I_r)^T (U+I_r) +U^T D^T D U=I_r, \mbox{ or equivalently, } U+U^T + U^T U + U^T D^T D U=0.
\end{equation}

Thus $U+U^T$ is a negative semidefinite matrix of real entries, and the diagonal terms of $DUD^T$ are non-positive because

\begin{align}\label{eqn:application:domination:1}
-(DUD^T)_{ii}=-(DU^T D^T)_{ii}&=-(D(U+U^T)D^T)_{ii}/2\nonumber \\
&=[D(U^T U +U^T D^T D U) D^T]_{ii}/2,
\end{align}

The latter is a sum of two positive semidefinite matrices of real entries.

Furthermore, \eqref{eqn:application:domination:0} also implies that $U^{-1} + (U^T)^{-1}+I_r+D^T D=0$. \ Let $A:=U^{-1}-(U^T)^{-1}$ be the matrix difference. \ Then $A$ is real asymmetric and $U^{-1}= [A- (I_r+D^T D)]/2$. \ Thus

$$U=-2(I_r+D^T D -A)^{-1}.$$

Using this formula for $U$, we show the following key trace-estimate.

\begin{lemma}\label{lemma:application:domination} We have
$$|\tr(DUD^T)| \le 2r.$$
\end{lemma}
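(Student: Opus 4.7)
The plan is to turn the relation \eqref{eqn:application:domination:0} into a \emph{completion of the square} that exposes an orthogonal matrix built out of $U$, and then express $\tr(DUD^T)$ as a single Frobenius inner product that admits a clean Cauchy--Schwarz bound. Throughout, write $B := I_r + D^T D$, which is symmetric with $B \succeq I_r$, so $B^{1/2}$ and $B^{-1/2}$ exist. Rewrite \eqref{eqn:application:domination:0} as
\[
U + U^T + U^T B U = 0.
\]

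Next, I would introduce
\[
O \;:=\; I_r + B^{1/2} U B^{1/2},
\]
and check that $O$ is orthogonal. Indeed,
\[
O^T O \;=\; I_r + B^{1/2}\bigl(U + U^T + U^T B U\bigr) B^{1/2} \;=\; I_r,
\]
with the middle term vanishing by the boxed identity. Inverting gives $U = B^{-1/2}(O - I_r) B^{-1/2}$. Substituting this into $\tr(DUD^T) = \tr(U(B - I_r))$ and using $B^{-1/2}(B - I_r)B^{-1/2} = I_r - B^{-1}$ yields the key identity
\[
\tr(DUD^T) \;=\; \tr\!\bigl((O - I_r)(I_r - B^{-1})\bigr).
\]

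Finally, I would apply Cauchy--Schwarz for the Frobenius inner product:
\[
|\tr(DUD^T)| \;\le\; \|O - I_r\|_F \cdot \|I_r - B^{-1}\|_F.
\]
For the first factor, $\|O - I_r\|_F^2 = \tr(2 I_r - O - O^T) = 2r - 2\tr(O) \le 4r$, since any real orthogonal matrix satisfies $\tr(O) \ge -r$. For the second factor, the eigenvalues of $B$ are $\ge 1$, so the eigenvalues $1 - 1/\lambda_i$ of $I_r - B^{-1}$ lie in $[0,1)$, giving $\|I_r - B^{-1}\|_F^2 \le r$. Multiplying these bounds produces
\[
|\tr(DUD^T)| \;\le\; 2\sqrt{r}\cdot\sqrt{r} \;=\; 2r.
\]

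The only genuinely nontrivial step is constructing the orthogonal matrix $O$; once that completion of the square is in hand, the rest is routine. One could equivalently arrive at $O$ via the formula $U = -2(B-A)^{-1}$ from the excerpt, since a short computation shows $O = -(I_r + \tilde A)(I_r - \tilde A)^{-1}$ is the Cayley transform of $-\tilde A$ with $\tilde A := B^{-1/2} A B^{-1/2}$ antisymmetric, automatically orthogonal; but the direct verification above avoids introducing $A$.
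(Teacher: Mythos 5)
Your proof is correct, and it takes a genuinely different route from the paper. You complete the square in the relation $U+U^T+U^TBU=0$ (with $B=I_r+D^TD$) to exhibit the orthogonal matrix $O=I_r+B^{1/2}UB^{1/2}$, rewrite $\tr(DUD^T)=\tr\bigl((O-I_r)(I_r-B^{-1})\bigr)$, and finish with Cauchy--Schwarz in the Frobenius inner product; all the individual steps check out ($O^TO=I_r$ follows from the boxed identity, $\|O-I_r\|_F^2=2r-2\tr(O)\le 4r$ since diagonal entries of an orthogonal matrix lie in $[-1,1]$, and $\|I_r-B^{-1}\|_F^2\le r$ since $B\succeq I_r$). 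The paper instead inverts the relation: it sets $A:=U^{-1}-(U^T)^{-1}$ (skew-symmetric), derives $U=-2(I_r+D^TD-A)^{-1}$, diagonalizes $D^TD$, and reduces the trace to $\tr(I_r+E'-B')^{-1}$ with $E'$ diagonal positive and $B'$ skew, which it bounds by showing every eigenvalue has real part at least $1$, so each term $\Re\mu^{-1}$ lies in $[0,1]$. The trade-offs: the paper's spectral argument actually yields the one-sided statement $-2r\le\tr(DUD^T)\le 0$ (consistent with the non-positivity of the diagonal of $DUD^T$ established in \eqref{eqn:application:domination:1}), whereas your Cauchy--Schwarz bound gives only the absolute value, which is all the lemma asserts; on the other hand, your argument never inverts $U$ nor needs $\rank(D)=r$ (the paper uses both, via $U^{-1}$ and the invertibility of $F=E^{1/2}$), so it is somewhat more robust and, as you note via the Cayley-transform remark, conceptually explains where the orthogonal matrix comes from. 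Both arguments deliver the same constant $2r$.
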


\begin{proof}(of Lemma \ref{lemma:application:domination}) Using the formula $\tr(XY) = \tr(YX)$, write

$$\tr(DUD^T) = \tr(D^TD U)= -2\tr(D^T D (I_r+D^T D -A)^{-1}).$$

Let $V\in \O( r)$ be an orthogonal matrix which diagonalizes $D^T D$,

$$V (D^T D) V^T =E,$$

where $E=(\lambda_i)_{1\le i\le r}$ is the diagonal matrix of eigenvalues of $D^T D$ (where we recall that as $\rank(D)=r$, all the eigenvalues $\lambda_i $ of $D^T D$ are positive).

Plugging into the trace identity above,

\begin{align*}
\tr(D^T D (I_r+D^T D -A)^{-1}) &= \tr((V^T EV) (I_r+D^T D -A)^{-1}) =  \tr(EV (I_r+D^T D -A)^{-1} V^T)\\
&=\tr(E(I_r+VD^T DV^T -VAV^T)^{-1}) = \tr(E(I_r+E -VAV^T)^{-1})\\
&=\tr(E(I_r+E -B)^{-1}),
\end{align*}

where $B:=VAV^T$ is a real asymmetric matrix of order $r$.

Next, let $F$ be the diagonal matrix of positive entries $F=(\sqrt{\lambda_i})_{1\le i\le r}$. \ Thus $E=FF^T$, and so

 \begin{align*}
\tr(D^T D (I_r+D^T D -A)^{-1})&=\tr(E(I_r+E -B)^{-1})= \tr(FF^T(I_r+E -B)^{-1})\\
&=\tr(F^T(I_r+E -B)^{-1}F)=\tr(I_r+E^{-1}-F^{-1}B(F^T)^{-1})^{-1}.
\end{align*}

Again, notice that the matrix $B':=F^{-1}B(F^T)^{-1}$ is another real asymmetric matrix, and the diagonal terms of the inverse matrix $E'=E^{-1}$ are positive. \ To this end, we introduce the following estimate.

\begin{claim}\label{claim:application:trace}
Assume that $E'=(e_i)$ is a diagonal matrix with positive entries, and $B'=(b_{ij})$ is a real asymmetric matrix, all of order $r$. \ Then

$$0\le \tr(I_r+E'-B')^{-1} \le r.$$
\end{claim}

\begin{proof}(of Claim \ref{claim:application:trace})
Let $\mu_i, 1\le i\le r,$ be the eigenvalues of $I_r+E'-B'$. \ Then

$$\tr(I_r+E'-B')^{-1} = \sum_i \mu_i^{-1} = \sum_i \Re \mu_i^{-1}.$$

On the other hand, for any eigenvalue $\mu$ with unit eigenvector $\Bx=(x_1,\dots,x_r)\in \C^r$, the identity $(I_r+E'-B')\Bx =\mu \Bx$ implies that

\begin{align*}
\mu &= (I_r+E'-B')\Bx \cdot \bar{\Bx}=(I_r+E') \Bx\cdot \bar{\Bx} - B' \Bx \cdot \bar{\Bx} = 1 +\sum_i e_i |x_i|^2  - \sum_{i\neq j} b_{ij} x_i \bar{x}_j\\
&= 1 +\sum_i e_i |x_i|^2  -\sum_{1\le i<j\le r} b_{ij}(x_i \bar{x}_j - x_j \bar{x}_i).
\end{align*}

Because $b_{ij}\in \R$, the second summand is purely imaginary, and so $\Re \mu  =1 +\sum_i e_i |x_i|^2\ge 1 $. \ It thus follows that

$$0\le \Re \mu^{-1} \le 1.$$

Summing over all $\mu_i$, we hence obtain the claim.

\end{proof}

It is clear that Claim \ref{claim:application:trace} implies Lemma \ref{lemma:application:domination}. \ The proof of this lemma is therefore complete.
\end{proof}

We now conclude the main result of this section.

\begin{proof}(of Theorem \ref{theorem:orthogonal:2}) Let $k=n^{\ep}$, and let $K$ be the number of indices $i_0, 1\le i_0\le n_2-r,$ where $(DUD^T)_{i_0i_0} \le -k/n$. By \eqref{eqn:application:domination:1} and Lemma \ref{lemma:application:domination},

$$K\le \frac{2rn}{k} = 2rn^{1-\ep}.$$

Thus there are at least $n_2-r-O(n^{1-\ep})=n-O(n^{1-\ep})$ indices $i_0$ such that

$$-k/n \le (DUD^T)_{i_0i_0} \le 0.$$

With such $i_0$, by Theorem \ref{theorem:discrete:main:2},

$$|(M''+F)_{i_0i_0} |=|(DUD^T)_{i_0i_0}+ (I)_{i_0i_0}|\ge 1 -n^{-1+\ep}.$$


\end{proof}


\section{Open problems}\label{section:problem}

One obvious problem is to improve our result, to show that if $M$ maps many
points in $\mathcal{C}_{n}$\ to points in $\mathcal{C}_{n}$, then $M$\ is
close to a permuted diagonal matrix on all but $O\left(  \log n\right)
$\ rows (rather than all but $O\left(  n^{1-\varepsilon}\right)  $\ rows, as
the current result gives).

A second problem is to generalize our result (regarding $s_0(M)$ \footnote{In the complex setting, the exact score function $s_0(M)$ is the probability that $M\Bx$ lies exactly on the product of $n$ unit circles, that is $s_0(M)= \P_{\Bx\in \CC_n}(|(M\Bx)_1|=\dots = |(M\Bx)_n|=1)$.}) from orthogonal matrices and real matrices to
unitary matrices and general complex matrices.


A third problem is to prove analogous results constraining the form of
near-isometries, but for objects other than hypercubes.

However, perhaps the most interesting problem is to generalize this paper's
treatment from the \textquotedblleft exact\textquotedblright\ score function
$s_{0}\left(  M\right)  $\ to the original score function $s\left(  M\right)
$, thereby answering questions \ref{question:score} and \ref{question:perm} by the second named author and Hance
(at least for the case of real and orthogonal matrices). \ To that end, we believe that the following asymptotic version of Theorem \ref{theorem:optimal} would be useful.

\begin{conjecture}
\label{ballconj}Let $0<\ep <1$. Let $B$\ be
a ball in $\R^{d}$\ of radius $1/\sqrt{n}$ (where $d$ could be as large as $\log n$),\ and suppose that the $\R^d$ vectors $\mathbf{a}_{1},\ldots,\mathbf{a}_{n}$\ satisfy

\[
\Pr_{x_{1},\ldots,x_{n}\in\left\{  -1,1\right\}  }\left( x_{1}\mathbf{a}%
_{1}+\cdots+x_{n}\mathbf{a}_{n}\in B\right)  \geq\frac{1}{n^{C}}%
\]
for some constant $C$, where $x_{1},\ldots,x_{n}$\ are independent Bernoulli
variables. \ Then there exists a subspace $S\leq\mathbb{R}^{d}$ of dimension
$O_{C,\ep}(1)$\ (depending on $C,\ep$ but independent of $n$ and $d$), such that all but at most $n^\ep$ of the vectors $\mathbf{a}_{i}$\ have
distance at most $1/n^{\varepsilon}$\ from some vector in $S$.
\end{conjecture}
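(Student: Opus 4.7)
The conjecture is the continuous, small-ball analogue of Theorem \ref{theorem:optimal}, and my plan is to reduce to that result via a Fourier-analytic dimension reduction. The naive reduction---round each $\mathbf{a}_i$ to a sufficiently fine lattice $\eta\Z^d$ and pigeonhole over the $\sim(1/(\eta\sqrt n))^d$ lattice points in $B$---fails when $d$ is as large as $\log n$, since the pigeonhole loss becomes quasi-polynomial in $n$. The plan is to first use the small-ball hypothesis to reduce to an ambient subspace of dimension $O_{C,\ep}(1)$, where the naive discretization is then tractable.

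First, I would apply the $d$-dimensional Esseen inequality at scale $r=1/\sqrt n$ to obtain
\[
\int_{\|t\|\le\sqrt n}|\phi(t)|\,dt \;\ge\; c\,n^{d/2-C}, \qquad \phi(t):=\prod_{i=1}^n\cos\langle t,\mathbf{a}_i\rangle.
\]
Combining this with $|\cos\theta|\le\exp(-c\,\|\theta/\pi\|_{\R/\Z}^2)$, a standard two-level splitting extracts a set $T\subseteq B_{\sqrt n}(0)$ with $\mathrm{Vol}(T)\ge c\,n^{d/2-O(C)}$ on which
\[
\sum_{i=1}^n\bigl\|\langle t,\mathbf{a}_i\rangle/\pi\bigr\|_{\R/\Z}^2 \;\le\; K := O_C(\log n) \quad\text{for all } t\in T.
\]

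The dimension reduction proceeds as follows. A greedy construction---iteratively collect orthonormal directions $v_j$ in which $T$ has width less than $n^{1/2-\ep}$---yields at most $d'=O_{C,\ep}(1)$ narrow directions, because the bounding-box estimate $\mathrm{Vol}(T)\le\prod_j w_j$ combined with $\mathrm{Vol}(T)\ge n^{d/2-O(C)}$ forces $k\ep\le O(C)$ after $k$ steps. Setting $V:=\mathrm{span}(v_1,\ldots,v_{d'})$, the set $T$ has width $\ge n^{1/2-\ep}$ in every direction of $V^\perp$, so the covariance $\Sigma_T:=\mathrm{Vol}(T)^{-1}\int_T tt^T\,dt$ satisfies $\Sigma_T|_{V^\perp}\succeq c\,n^{1-2\ep}\,I_{V^\perp}$. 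Averaging the $\R/\Z$-constraint against uniform measure on $T$, using $\|\theta/\pi\|_{\R/\Z}^2\le(\theta/\pi)^2$ after a truncation (or a more refined Plancherel-type argument to control the wrap-around), should yield $\sum_i\mathbf{a}_i^T\Sigma_T\mathbf{a}_i\le O(K)$. Markov then gives $\mathbf{a}_i^T\Sigma_T\mathbf{a}_i\le O(K/n^\ep)$ for all but $n^\ep$ indices $i$, and the $V^\perp$-eigenvalue lower bound yields $\|P_{V^\perp}\mathbf{a}_i\|\le 1/n^\ep$ (after adjusting $\ep$). So the good $\mathbf{a}_i$'s are within $1/n^\ep$ of $V$, itself a subspace of dimension $O_{C,\ep}(1)$.

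With $V$ in hand, project the good vectors to $\mathbf{a}_i^V:=P_V\mathbf{a}_i\in V\cong\R^{d'}$: the small-ball hypothesis descends to $V$ by projection, and the discretization argument now loses only $n^{O(d')}=n^{O_{C,\ep}(1)}$; applying Theorem \ref{theorem:optimal} in the torsion-free group $V$ produces a proper symmetric GAP $Q\subset V$ of rank $r=O_{C,\ep}(1)$ containing all but $n^\ep$ of the projected vectors. Setting $S:=\mathrm{span}(g_0,g_1,\ldots,g_r)\le V$ gives a linear subspace of dimension $\le r+1=O_{C,\ep}(1)$, in which the good projected vectors lie exactly; combined with the bound $\|P_{V^\perp}\mathbf{a}_i\|\le 1/n^\ep$, all but $O(n^\ep)$ of the $\mathbf{a}_i$'s lie within $1/n^\ep$ of $S$. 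The \emph{main obstacle} is the averaging step in the dimension reduction: the bound $\|\theta/\pi\|_{\R/\Z}^2\le(\theta/\pi)^2$ is lossy when $|\theta|\ge\pi/2$, which occurs for large-norm $\mathbf{a}_i$ and large $\|t\|\sim\sqrt n$; handling this wrap-around modular structure rigorously will likely require a Plancherel/Bogolyubov argument on the difference set $T-T$, or a refined Esseen-type small-ball estimate in the spirit of Rudelson--Vershynin's essential LCD theory. Propagating the various polynomial losses through Esseen, the level-set extraction, the dimension reduction, and the discretization so that the final exceptional set remains within the $n^\ep$ budget is a further technical challenge.
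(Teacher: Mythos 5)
First, note that the statement you are proving is Conjecture \ref{ballconj}: the paper states it as an open problem and offers no proof, so your proposal would have to stand on its own as a new result rather than as a reconstruction of an argument in the paper.

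As it stands there is a genuine gap, and it sits exactly where you flag the ``main obstacle'': the averaging step $\sum_i \Ba_i^T\Sigma_T\Ba_i\le O(K)$ is not just unproven but false. Take all $\Ba_i$ equal to a fixed unit vector $\Bv$ and $B$ centered at the origin; then the hypothesis holds with $C=1/2$, while the level set $T=\{t\in B_{\sqrt n}(0):\ n\,\|\langle t,\Bv\rangle/\pi\|_{\R/\Z}^2\le K\}$ is a union of $\Theta(\sqrt n)$ slabs sweeping out the whole range $\langle t,\Bv\rangle\in[-\sqrt n,\sqrt n]$. It has no narrow directions (so $V=\{0\}$, $V^{\perp}=\R^d$), its variance in the direction $\Bv$ is of order $n$, and hence $\sum_i\Ba_i^T\Sigma_T\Ba_i\asymp n^2$, not $O(\log n)$; the ensuing deduction $\|P_{V^\perp}\Ba_i\|\le n^{-\ep}$ would force $\|\Bv\|\le n^{-\ep}$, a contradiction. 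The inequality $\|\theta/\pi\|_{\R/\Z}^2\le(\theta/\pi)^2$ points the wrong way for your purposes: you need the reverse bound, which fails precisely when $\langle t,\Ba_i\rangle$ wraps around nonzero multiples of $\pi$, and that wrap-around is not a technical nuisance but the entire arithmetic content of the problem (it is where the GAP structure in Theorem \ref{theorem:optimal} comes from). Your suggested remedies (Bogolyubov on $T-T$, essential-LCD-type estimates) are exactly the missing ingredient, i.e.\ the open part of the conjecture. Two smaller points: (i) the step from ``width $\ge n^{1/2-\ep}$ in every direction of $V^\perp$'' to $\Sigma_T|_{V^\perp}\succeq c\,n^{1-2\ep}I$ is unjustified because $T$ is a sublevel set of a periodic, non-convex function (this one is repairable by running the greedy cut with variances and Chebyshev slabs instead of widths); (ii) the final discretization-plus-Theorem-\ref{theorem:optimal} paragraph is superfluous: the conjecture only asks for a subspace, so once you had $V$ of bounded dimension with $\|P_{V^\perp}\Ba_i\|\le n^{-\ep}$ for all but $n^\ep$ indices you could simply take $S:=V$ --- which underlines that all of the difficulty is concentrated in the step that is currently broken.
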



{\bf Acknowledgments.} The authors are grateful to T.~Tao, A.~Arkhipov, and S.~Garg for invaluable comments and suggestions.

\appendix

\section{Permaments of stochastic matrices}\label{section:stochastic}

The goal of this section is to show a strong variant of Theorem \ref{theorem:orthogonal:2} and Theorem \ref{theorem:application:generic} for (column) stochastic matrices of large permanent.

\begin{theorem}\label{theorem:stochastic}
Let $A=\left(  a_{ij}\right)  $\ be an $n\times n$\ stochastic matrix, and
suppose $\per(A)  \geq n^{-C}$.
\ Then all but $O_C\left( \log n\right)  $\ of the rows of $A$\ contain an
entry that is at least $0.8$, with the remaining entries in that row summing
to at most $0.1$.\ \ (Of course, by stochasticity, these $0.8$\ entries must
all lie in separate columns.)
\end{theorem}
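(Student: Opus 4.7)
The plan is to exploit the probabilistic interpretation
\[
\per(A) \;=\; \Pr\!\big[\,X_1,\ldots,X_n \text{ is a permutation of } [n]\,\big],
\]
which is valid for any column-stochastic $A$, where $X_j$ is an independent random draw from the $j$-th column (so $\Pr[X_j=i]=a_{ij}$). The hypothesis $\per(A)\geq n^{-C}$ is then a polynomial lower bound on the probability that $n$ independent column-samples form a bijection. Since each column sums to $1$, every column has at most one entry $\geq 0.8$. Let $L := \{j : \max_i a_{ij}\geq 0.8\}$, let $\pi(j)\in[n]$ denote that dominant row for $j\in L$ (so $\pi$ is injective on $L$), and set $m := n-|L|$, the number of ``spread'' columns.

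Step 1 (few spread columns). I would show $m=O_C(\log n)$ by partitioning $\per(A)$ according to the set $I(\sigma):=\{j\in L:\sigma(j)=\pi(j)\}$ of columns on which $\sigma$ picks the dominant entry, giving
\[
\per(A) \;=\; \sum_{I\subseteq L}\Big(\prod_{j\in I}a_{\pi(j),j}\Big)\,\per\!\big(B(I)\big),
\]
where $B(I)$ is the sub-stochastic matrix obtained from $A$ by deleting rows $\pi(I)$ and columns $I$ and additionally zeroing out the surviving dominant entries (those indexed by $L\setminus I$). In $B(I)$, each column indexed by $L\setminus I$ has sum $\leq 0.2$ (what is left of the column after removing the $\geq 0.8$ entry), while each column indexed by $[n]\setminus L$ still has all entries $<0.8$. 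Applying the elementary bound $\per(B)\leq\prod_j(\text{col sum})$ on the $L\setminus I$ block, together with a Bregman/Van-der-Waerden-style estimate that harvests a factor strictly less than $1$ per spread column on the other block, should yield $\per(A)\leq c^{m}$ for some absolute constant $c<1$, which forces $m=O_C(\log n)$.

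Step 2 (row condition). After Step 1, all but $O_C(\log n)$ rows $i$ lie in $\pi(L)$ and come with a designated column $j_i:=\pi^{-1}(i)$ where $a_{i,j_i}\geq 0.8$. It remains to bound the number of such $i$ for which the row-leftover $r'_i := \sum_{j\neq j_i} a_{ij}$ exceeds $0.1$. Condition on the ``main'' event $\mathcal{E}:=\bigcap_{j\in L}\{X_j=\pi(j)\}$, which by Step 1 carries the bulk of the mass in $\per(A)$. Under $\mathcal{E}$, a row $i$ with $r'_i>0.1$ has a uniformly positive probability (depending only on the excess $r'_i-0.1$) of being hit a \emph{second} time by some $X_{j'}$ with $j'\neq j_i$, and any such second hit destroys the permutation event. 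Peeling off one bad row at a time and accumulating these independent ``second-hit penalties'' gives another geometric-decay bound of type $c^{k}$ in the number $k$ of bad rows, forcing $k=O_C(\log n)$.

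The main obstacle is cleanly formalizing Step 2, because the single-row events $\{N_i=1\}$ for different $i$ are \emph{negatively} correlated through the identity $\sum_i N_i=n$, so no naive product-of-single-row-probabilities bound is available. I expect to handle this by a one-row-at-a-time conditional decomposition in the spirit of the display \eqref{eqn:prob:conditional} in the paper, together with the elementary Poisson-binomial fact that $\Pr[N_i=1]$ is bounded strictly away from $1$ whenever $\mathbb{E}[N_i]\geq 1+\delta$ and no single coordinate probability exceeds $0.9$. Optimizing the resulting constants should match the theorem's numerical thresholds of $0.8$ and $0.1$.
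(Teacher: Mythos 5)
Your probabilistic starting point (independent column-samples $X_1,\dots,X_n$, with $\per(A)=\Pr[\text{they form a bijection}]$) is the same as the paper's balls-in-bins view, but the route you take after that has a genuine gap at its core, namely the key estimate of Step 1. The factor ``strictly less than $1$ per spread column'' that you hope to harvest from a Bregman/Van-der-Waerden-style bound does not exist as an off-the-shelf tool: Bregman is for $0$--$1$ matrices, Van der Waerden is a lower bound for doubly stochastic matrices, and the elementary bound $\per(B)\le\prod_j(\text{column sum})$ only yields a factor $\le 1$ per spread column. Worse, the statement you need is false if read as a property of the spread columns alone: if the spread columns of $B(I)$ have (nearly) disjoint supports --- say column $k$ puts mass $1/2$ on each of two rows used by no other spread column --- then the probability that those balls avoid mutual collisions is $1$, so no per-column loss can be extracted at that level. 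The loss can only come from the global constraint that \emph{every row} must be hit exactly once (the leftover rows have to be covered by somebody, and spread/deficient rows make that unlikely), and that is precisely the content of the theorem; your sketch never supplies an argument for it, and the cross-terms $I\subsetneq L$ in your expansion inherit the same problem. In effect, to close Step 1 you would have to prove, for the minors $B(I)$, a statement equivalent to the theorem itself. Step 2 has a secondary unsupported claim: conditioning on $\mathcal{E}=\bigcap_{j\in L}\{X_j=\pi(j)\}$ and asserting it ``carries the bulk of the mass'' of $\per(A)$ is not justified ($\Pr[\mathcal{E}]$ can be exponentially small, and $\mathcal{E}\cap E$ is just one term of your expansion), and the negative-correlation difficulty you flag is exactly the technical point you leave open.

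For comparison, the paper avoids both issues by working with rows directly and never decomposing the permanent. It classifies rows as \emph{little} (row sum $\le 0.9$) or \emph{splittable} (two parts each of mass $\ge 0.1$), notes that $\Pr[B_i\ge 1]\le 0.9$ for little rows and $\Pr[B_i\ge 2]\ge(1-e^{-0.1})^2$ for splittable rows (by splitting the balls into two independent groups), and then --- this is the answer to your correlation worry --- observes that the events $\{B_i\ge 1\}$ (resp.\ $\{B_i\le 1\}$) form a submartingale under sequential conditioning, so Azuma gives $\Pr[E]\le\exp(-\Omega(|L|))$ and $\Pr[E]\le\exp(-\Omega(|S|))$. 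Hence $|L|,|S|=O(C\log n)$, and any row that is neither little nor splittable automatically has an entry $\ge 0.8$ with the rest of the row summing to at most $0.1$. If you want to salvage your plan, the fastest repair is to drop the column-based expansion entirely and prove your ``penalty per bad row'' claims by this little/splittable dichotomy plus the submartingale-Azuma step.
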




\begin{proof}(of Theorem \ref{theorem:stochastic})
Let $E$ be the event that, if we throw $n$ balls independently into $n$ bins,
with the $j^{th}$\ ball thrown according to the probability distribution
$\Pr\left(  \text{bin }i\right )  =a_{ij}$, then all $n$ balls land in separate
bins (i.e., there are no collisions). \ Then observe that $\per(A)$\ is simply\ $\Pr\left (E\right) $.

Let $\row_{i}=\left(  a_{ij}\right)  _{j}$\ be the $i^{th}$\ row vector in $A$.
\ Also let $\|\row_{i}\|_{1}:=\sum_{j}a_{ij}$. \ Observe that
$\sum_{i}\|\row_{i}\| _{1}=n$. \ In the balls-in-bins
experiment, let $B_{i}$\ be the number of balls that land in the $i^{th}%
$\ bin. By definition, $\sum_{i}B_{i}=n$ and $\E (B_i)=\|\row_i\|_1$. Also, the event $E$ holds only when $B_{i}=1\forall i$.

Call the $i^{th}$\ row \textit{little} if $\|\row_{i}\|_{1}\leq0.9$, and\ \textit{splittable} if one can partition its entries into
two parts, both of which sum to at least $0.1$. \ Let $L,S\subseteq\left[
n\right]  $ be the sets of little and splittable rows respectively. \ Our
strategy will be to show that

$$\Pr\left( E\right )  \leq\exp\left(
-\Omega\left(  \max\left\{  \left\vert L\right\vert ,\left\vert S\right\vert
\right\}  \right)  \right)  .$$

To begin with the little rows: for each $i\in L$, Markov's inequality implies
that $\Pr\left(  B_{i}\geq1\right)  \leq0.9$. \ Furthermore, the events
$B_{i}\geq1$\ behave as a submartingale, in the sense that, conditioned on
some subset of them occurring, we can only \textit{decrease} the probability
that others occur (by decreasing the expected number of balls available to
land in other bins). \ So by Azuma's inequality, we have%
\[
\Pr\left(  E\right)  \leq\Pr\left(  B_{i}\geq1~\forall i\in L\right)  \leq
\exp\left(  -\frac{\left(  0.1\left\vert L\right\vert \right)  ^{2}%
}{2\left\vert L\right\vert }\right)  =\exp\left(  -\frac{\left\vert
L\right\vert }{200}\right)  ,
\]
where we also used the fact that%
\[
\E \left(  \sum_{i\in L}B_{i}\right)  =\sum_{i\in L}\|\row_{i}\|_{1}\leq0.9\left\vert L\right\vert .
\]

For the splittable rows: for each $i\in S$, we claim that%
\[
\Pr\left(  B_{i}\geq2\right)  \geq\left(  1-e^{-0.1}\right)  ^{2}>0.009.
\]
The reason is that we can partition the $n$ balls into two sets $P$\ and $Q$,
both of which have at least $0.1$\ balls landing in the $i^{th}$\ bin in
expectation. \ Because the balls are thrown independently, this implies that
$P$\ (and likewise, $Q$) must have at least one ball landing in the $i^{th}%
$\ bin with probability at least $1-e^{-0.1}$. \ Moreover, these events are
independent between $P$ and $Q$.

Now, the events $B_{i}\leq1$\ behave as a submartingale: conditioned on some
of them occurring, we can only \textit{decrease} the probability that others
occur, by increasing the expected number of balls available for other bins.
\ So by Azuma's inequality,%
\[
\Pr\left(  E\right)  \leq\Pr\left(  B_{i}\leq1~\forall i\in S\right)  \leq
\exp\left(  -\frac{\left(  0.009\left\vert S\right\vert \right)  ^{2}%
}{2\left\vert S\right\vert }\right)  <\exp\left(  -\frac{\left\vert
S\right\vert }{25000}\right)  .
\]

So, in conclusion, if $\per(A)  =\Pr\left(
E\right)  $ is $n^{-C}$, then $\left\vert L\right\vert
$\ and $\left\vert S\right\vert $\ must both be $O\left( C \log n\right)  $.
\ Now consider a row $i$\ that is neither little nor splittable. \ We have
$\|\row_{i}\|_{1}>0.9$. \ Moreover, $\row_{i}$\ must contain an
entry $j$\ that is at least $0.8$, since otherwise we could split $\row_{i}$,\ by
setting $P=\left\{  j\right\}  $\ and $Q=\left[  n\right]  \setminus\left\{
j\right\}  $.

\end{proof}

\end{document}